\documentclass[twoside,11pt]{article}

\usepackage{jmlr2e}

\usepackage[utf8]{inputenc}
\usepackage[T1]{fontenc}
\usepackage{url}
\usepackage{booktabs}
\usepackage{amsfonts}
\usepackage{dsfont}
\usepackage{amsmath}
\usepackage{amssymb}
\usepackage{mathtools}
\usepackage{multirow}
\usepackage{textcomp,gensymb}
\usepackage{array}
  \newcolumntype{P}[1]{>{\centering\arraybackslash}p{#1}}
\usepackage{hyperref}
  \hypersetup{hidelinks}
\usepackage{microtype}

\usepackage{lastpage}
\jmlrheading{25}{2024}{1-\pageref{LastPage}}{11/23; Revised 7/24}{7/24}{23-1553}{Roberto I. Oliveira, Paulo Orenstein, Thiago Ramos and João Vitor Romano}

\ShortHeadings{Split CP and Non-Exchangeable Data}{Oliveira, Orenstein, Ramos and Romano}
\firstpageno{1}

\DeclareMathOperator{\train}{train}
\let\cal\relax
\DeclareMathOperator{\cal}{cal}
\DeclareMathOperator{\test}{test}
\newcommand{\R}{\mathbb{R}}
\newcommand{\Z}{\mathbb{Z}}
\newcommand{\N}{\mathbb{N}}
\newcommand{\sA}{\mathcal{A}}
\newcommand{\sF}{\mathcal{F}}
\newcommand{\sS}{\mathcal{S}}
\newcommand{\sT}{\mathcal{T}}
\newcommand{\sW}{\mathcal{W}}
\newcommand{\sX}{\mathcal{X}}
\newcommand{\sY}{\mathcal{Y}}
\newcommand{\sZ}{\mathcal{Z}}
\renewcommand{\P}{\mathbb{P}}
\newcommand{\E}{\mathbb{E}}
\newcommand{\V}{\mathrm{Var}}
\newcommand{\VC}{\mathrm{VC}}
\DeclareMathOperator{\TV}{TV}
\DeclarePairedDelimiter{\braces}{\lbrace}{\rbrace}
\newcommand{\Ind}{\operatorname{\mathds{1}}\braces*}
\DeclareMathOperator{\Avg}{Avg}

\begin{document}

\title{Split Conformal Prediction and Non-Exchangeable Data}

\author{\name Roberto I. Oliveira \email rimfo@impa.br \\
       \addr IMPA, Rio de Janeiro, Brazil.
       \AND
       \name Paulo Orenstein \email pauloo@impa.br \\
       \addr IMPA, Rio de Janeiro, Brazil.
       \AND
       \name Thiago Ramos \email thiagorr@impa.br \\
       \addr IMPA, Rio de Janeiro, Brazil.
       \AND
       \name João Vitor Romano \email joao.vitor@impa.br \\
       \addr IMPA, Rio de Janeiro, Brazil.
}

\editor{Chris Oates}

\maketitle

\begin{abstract}%
Split conformal prediction (CP) is arguably the most popular CP method for
uncertainty quantification, enjoying both academic interest and widespread
deployment. However, the original theoretical analysis of split CP makes the
crucial assumption of data exchangeability, which hinders many real-world
applications. In this paper, we present a novel theoretical framework based on
concentration inequalities and decoupling properties of the data, proving that
split CP remains valid for many non-exchangeable processes by adding a small
coverage penalty. Through experiments with both real and synthetic data, we
show that our theoretical results translate to good empirical performance under
non-exchangeability, e.g., for time series and spatiotemporal data. Compared to
recent conformal algorithms designed to counter specific exchangeability
violations, we show that split CP is competitive in terms of coverage and
interval size, with the benefit of being extremely simple and orders of
magnitude faster than alternatives.
\end{abstract}

\begin{keywords}
  concentration inequalities, conformal prediction, non-exchangeable data, $\beta$-mixing, uncertainty quantification
\end{keywords}

\section{Introduction}

Conformal prediction (CP), introduced by \citet{vovk2005algorithmic}, is a set of techniques for quantifying uncertainty in the predictions of any model, under very general assumptions on the data-generating distribution. CP yields finite-sample coverage guarantees of many kinds, and has generated much recent interest \citep{vovk2008tutorial,lei2018distribution,romano2019conformalized,angelopoulos2023gentle,cauchois2021prediction}.

A concrete and popular formulation of CP is split conformal prediction \citep{papadopoulos2002inductive,lei2018distribution}. Consider a regression setting where the data is a random sample $(X_i,Y_i)_{i=1}^n$ of covariate and response pairs $(X_i,Y_i)\in\sX\times \sY$. Split CP proceeds as follows: (a) partition the data indices in three parts: a training set $I_{\train}$, a calibration set $I_{\cal}$ and a test set $I_{\test}$, each with sizes $n_{\train}$, $n_{\cal}$ and $n_{\test}$; (b) train a nonconformity score $\widehat{s}_{\train}\colon\sX\times \sY \to \R$, for example the residual $\widehat{s}_{\train}(x,y)=|y-\widehat{\mu}(x)|$ of an arbitrary model $\widehat{\mu}$ trained on $(X_i, Y_i)_{i\in I_{\train}}$; (c) compute the empirical $(1-\alpha)$-quantile $\widehat{q}_{1-\alpha}$ of $\{\widehat{s}_{\train}(X_i, Y_i)\}_{i \in I_{\cal}}$; and (d) for each $i\in I_{\test}$, define a confidence set
\begin{equation*}
    C_{1-\alpha}(X_i) \coloneqq \{y\in\sY\,:\,\widehat{s}_{\train}(X_i,y)\leq \widehat{q}_{1-\alpha}\}.
\end{equation*}
We note in passing that split CP itself does not depend on test data; $I_{\test}$ is included for notational ease and theoretical convenience, as the guarantees of the method pertain to unseen data. If the data $(X_i, Y_i)_{i=1}^{n}$ is exchangeable, then the usual theory of conformal prediction guarantees that the sets $C_{1-\alpha}(X_i)$ have good marginal coverage over the test set; that is, for any $i\in I_{\test}$,
\begin{equation}\label{eq:marginaliid}
    \P[Y_i\in C_{1-\alpha}(X_i)]\geq 1-\alpha - \eta,
\end{equation}
where $\eta=(1-\alpha)/(n_{\cal}+1)$. Equivalently, one can take the lower bound to be $1-\alpha$ by employing $C_{1-\alpha+\eta}$ instead. Additionally, for independent and identically distributed (iid) data and $\eta\gg 1/\min\{n_{\cal},n_{\test}\}$, \citet{lei2018distribution} prove empirical coverage over the test set; that is, for some positive constant $c>0$ and $\delta=\exp(-c\,\eta^2\,\min\{n_{\cal},n_{\test}\})$,
\begin{equation}\label{eq:empiricaliid}
\P\left[\frac{1}{n_{{\test}}}\sum_{i\in I_{\test}}\,\Ind{Y_i\in C_{1-\alpha}(X_i)}\geq 1-\alpha-\eta\right]\geq 1-\delta.
\end{equation}

Unfortunately, the results above are strongly reliant on the data exchangeability. Similarly, most guarantees from the classical theory of CP do not apply to several important data processes. For instance, in a time series context it is typical to predict the current value $Y_i$ via a time-lagged vector $X_i=(Y_{i-j})_{j=1}^p$, and the resulting process $(X_i,Y_i)_{i=1}^n$ is typically far from exchangeable. Spatial models and shifting data distributions will also break the exchangeability assumption.

Several recent papers have tried to address these issues \citep{chernozhukov2018blocks, xu2021dynamic, xu2023conformal, jensen2022ensemble, gibbs2021adaptive, gibbs2024conformal, barber2023conformal,zaffran2022adaptive,feldman2022risk}, but they generally require the introduction of new CP algorithms specifically tailored to different types of non-exchangeability. Some of these are very computationally intensive, while others only possess asymptotic guarantees. A third group requires adaptativity---that is, recalibration of the prediction set $C_{1-\alpha}$ at each time step---, which may not be desirable in applications.

The main message of this work is that on many occasions there is no need to introduce specific CP methods for non-exchangeable data. We prove that in such cases split CP possesses the marginal and empirical guarantees above, up to the addition of a slightly larger penalty term $\eta$ in (\ref{eq:marginaliid}) and (\ref{eq:empiricaliid}). These guarantees hold in finite samples and make no underlying assumptions on model consistency. While the penalty depends on the nature of the non-exchangeability---more specifically, on decoupling and concentration properties of the process---, we show that in practice the effect is small even for moderately dependent data, and that increasing the calibration set size is a viable corrective. Importantly, split CP is computationally simple, avoiding intensive routines such as bootstrapping, ensembling or blocking. Finally, the method is exactly the same as the one used for the iid data and attests to its robustness, which is essential to ensure its validity in practical settings.

For example, Figure~\ref{fig:intro} shows how split CP's marginal coverage (\ref{eq:marginal_coverage_guarantee}), calculated through $10\,000$ simulations, behaves for an $\text{AR}(1)$ time series and three different underlying models. The data-generating mechanism is given by $Y_t = \lambda Y_{t-1}+\varepsilon_t$, $t\in\N$, $\lambda\in\R$ and $\varepsilon_t \sim N(0,1)$ independently. Split conformal quantile regression \citep{romano2019conformalized} is employed to achieve a nominal level of $90\%$, with models trained on 11 lags (i.e. $X_t=(Y_{t-j})_{j=1}^{11}$) to predict the next element in the sequence. The $x$-axis is indexed by $\lambda$, which can be interpreted as a level of dependence in the data. Unless the dependence is very high, split CP still has adequate coverage: autoregressive coefficients up to $\lambda=0.99$ achieve coverage higher than $89\%$. Significant losses of coverage only happen when $\lambda \geq 0.999$.

\begin{figure}[t]
    \centering
    \includegraphics[width=\textwidth]{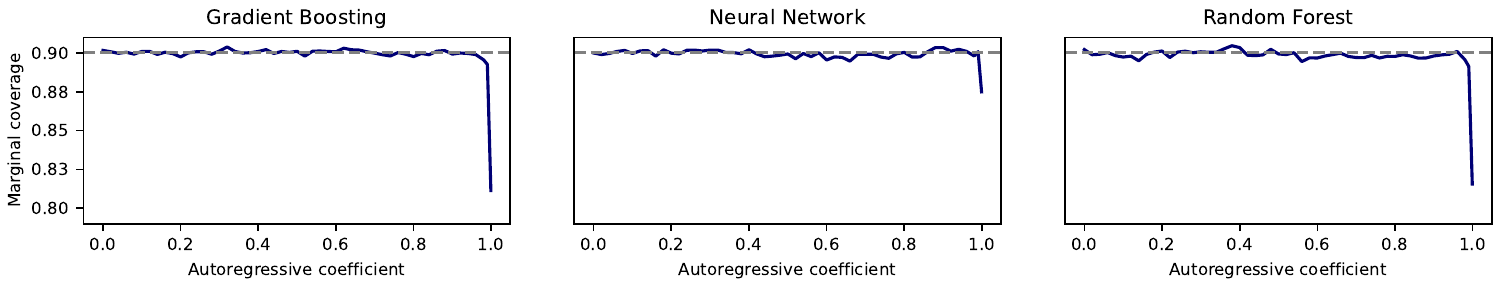}
    \caption{Marginal coverage for $\text{AR}(1)$ process (solid) and nominally prescribed iid level of 90\% (dashed) for different values of the autoregressive coefficient and three different models. Split CP holds well even under moderate dependence and undercoverage only happens at very high levels.}
    \label{fig:intro}
\end{figure}

Our main contributions are as follows:
\begin{itemize}
    \item We show coverage guarantees from split CP can be extended to large classes of dependent processes through the addition of a small coverage penalty;
    \item We do so by introducing a novel mathematical framework that is based on concentration inequalities and data decoupling properties rather than exchangeability;
    \item We present many generalizations that fit our framework, including both marginal, empirical and conditional coverage guarantees, as well as extensions to non-split CP methods and non-stationary data;
    \item We explicitly consider the broad class of stationary $\beta$-mixing processes (which includes, e.g., hidden Markov models, ARMA models and Markov chains), and show their empirical coverage bounds can match the order of the bounds under exchangeability;
    \item We conduct experiments that show split CP's success in generating prediction intervals in real and synthetic data, highlighting its advantages in terms of coverage, interval size, simplicity and speed compared to many recent algorithms, such as \citet{gibbs2024conformal, xu2021dynamic,xu2023conformal, barber2023conformal}.
\end{itemize}

Proofs are postponed to the appendices.

\section{Related Works}

There have been many new proposals in extending CP to non-exchangeable data. \citet{barber2023conformal} focus on distributional drift. They bound the \emph{coverage gap}---i.e., the difference between nominal and actual coverage levels---by a measure of deviation from exchangeability, which may be quite large for the time series or spatiotemporal data we deal with. \citet{gibbs2021adaptive,gibbs2024conformal} consider an online method where there is no calibration set and the quantile $\widehat{q}_{1-\alpha}$ is tuned online. They obtain very strong empirical coverage guarantees, but have no marginal coverage guarantees, and the online aspect of their method, which requires updates at every step, may be undesirable, e.g., in deployed applications. Similar comments apply to extensions and variants of this approach in \citet{feldman2022risk} and \citet{zaffran2022adaptive}.

\citet{chernozhukov2018blocks} design prediction sets for time series data via a block-permutation method. Their Theorem 2 gives approximate guarantees in a setting including $\alpha$-mixing processes, which is in principle weaker than our $\beta$-mixing assumption. However, this mixing condition interacts in a complicated way with the block structure (cf. Lemma 1 in \citealt{chernozhukov2018blocks}), which is a complicated hyperparameter that increases the computational cost. In other work, the same authors \citep{chernozhukov2021distributional} achieve stronger conditional guarantees for a simpler method than in \citet{chernozhukov2018blocks}, at the cost of much stronger assumptions. That is, they require that population objects be learned asymptotically from the data, which can be undesirable in real-world applications where one may want to be agnostic about the quality of the model. Similar comments apply to \citet{xu2021dynamic,xu2023conformal}, which have additional hyperparameters and computationally demanding algorithms.

In contrast, our framework for split CP holds for a broad range of non-exchangeable data; retains finite-sample marginal, empirical and conditional guarantees; does not require online modifications; has no hyperparameters and is both simpler and orders of magnitude faster than these alternatives. These advantages can be seen in the experiments in Section \ref{sec:empirical_studies}.

\section{Theoretical Results}

We begin this section by introducing the notation for the theoretical results. Denote by $(X_i,Y_i)_{i=1}^n$ a sample of $n$ random covariate/response pairs with stationary marginals. The pairs $(X_i,Y_i)$ take values in $\sX\times \sY$, where $\sX$ and $\sY$ are measurable spaces. An additional random pair $(X_*,Y_*)$ with values in $\sX\times \sY$, independent from the sample $(X_i,Y_i)_{i=1}^n$, will also be considered, and we assume $(X_i,Y_i)\sim (X_*,Y_*)$ for all $i\in[n]$, where $[n]\coloneqq\{1,\ldots,n\}$.

The data indices can be partitioned $[n]=I_{\train}\sqcup I_{\cal}\sqcup I_{\test}$, where $n=n_{\train}+n_{\cal} + n_{\test}$ and $I_{\train} \coloneqq [n_{\train}]$ corresponds to the training data, $I_{\cal} \coloneqq [n_{\train}+n_{\cal}]\backslash [n_{\train}]$ corresponds to calibration data, and $I_{\test} \coloneqq [n]\backslash [n_{\train}+n_{\cal}]$ corresponds to test data.

For any function $s \colon (\sX\times \sY)^{n_{\train}+1}\to \R$ and $(x,y)\in\sX\times \sY$, denote a nonconformity score as
\begin{equation*}
    \widehat{s}_{\train}(x,y) \coloneqq s((X_i,Y_i)_{i\in I_{\train}},(x,y)),
\end{equation*}
corresponding to the values of $s$ when the first $n_{\train}$ pairs in the input are the training data. Intuitively, the role of $\widehat{s}_{\train}$ is to measure how discrepant a prediction based on $x_i$ is compared to the true $y_i$; e.g., $\widehat{s}_{\train}(x,y)=|y-\widehat{\mu}(x)|$, where $\widehat{\mu}$ is some regression model trained on ${(X_i, Y_i)}_{i\in I_{\train}}$. Several choices have been proposed in the literature \citep{lei2018distribution, hechtlinger2019cautious, romano2019conformalized, angelopoulos2021uncertainty}.

Given $\phi\in [0,1)$, let $\widehat{q}_{\phi,\cal}$ denote the empirical $\phi$-quantile of $\widehat{s}_{\train}(X_i,Y_i)$ over $I_{\cal}$; i.e.,
\begin{equation} \label{eq:quantile}
    \widehat{q}_{\phi,\cal} \coloneqq \inf\Biggl\{t\in\R\,:\, \frac{1}{n_{\cal}}\sum_{i\in I_{\cal}}\Ind{\widehat{s}_{\train}(X_i,Y_i)\leq t}\geq \phi\Biggr\}.
\end{equation}
For $x\in\sX$, the prediction sets are then defined via $C_{\phi}(x) \coloneqq \{y\in\sY\,:\, \widehat{s}_{\train}(x,y)\leq \widehat{q}_{\phi,\cal}\}$. Also, given a measurable function $q\colon(\sX\times \sY)^{n_{\train}}\to \R$ (which can be thought of as a quantile), define $q_{\train} \coloneqq q((X_i,Y_i)_{i\in I_{\train}})$ and the probability
\begin{equation} \label{eq:p_q_train}
   {P}_{q,\train} \coloneqq \P[\widehat{s}_{\train}(X_*,Y_*)\leq q_{\train}\mid (X_i,Y_i)_{i\in I_{\train}}].
\end{equation}

\subsection{Marginal and Empirical Guarantees}\label{sec:general_framework}

This subsection details how marginal and empirical guarantees (\ref{eq:marginaliid}) and (\ref{eq:empiricaliid}) can be extended when the data is not exchangeable. Some basic assumptions are needed, though they are satisfied by large classes of processes. Section \ref{sec:stationary_beta_mixing_data} shows that is the case for stationary $\beta$-mixing data, but the general framework developed here generalizes to other processes.

First, it is necessary to have some form of concentration over the calibration data, as well as a degree of decoupling over the test data. We assume there exist $\varepsilon_{\cal}\in (0,1)$ and $\delta_{\cal}\in (0,1)$ such that
    \begin{equation}\label{eq:hypothesis-concentration_over_calibration}
        \P\Biggl[\Biggl|\frac{1}{n_{\cal}}\sum_{i\in I_{\cal}}\Ind{\widehat{s}_{\train}(X_i,Y_i)\leq q_{\train}} - P_{q,\train}\Biggr|\leq \varepsilon_{\cal}\Biggr] \geq 1-\delta_{\cal},
    \end{equation}
where $P_{q,\train}$ is defined as in (\ref{eq:p_q_train}).
Intuitively, this condition requires that the empirical and population cumulative distribution functions of $\widehat{s}_{\train}(X,Y)$ are close over calibration data. A key point here, however, is that this closeness should hold even when the cdf is computed over a point depending on training data.

Further, we assume that there exists $\varepsilon_{\train}$ such that, for $i \in I_{\test}$,
    \begin{equation}\label{eq:hypothesis-decoupling_over_test_data}
    \left|\P[\widehat{s}_{\train}(X_i,Y_i)\leq q_{\train}] - \P[\widehat{s}_{\train}(X_*,Y_*)\leq q_{\train}] \right|\leq \varepsilon_{\train}.
    \end{equation}
This means that $(X_i,Y_i)$ for $i\in I_{\test}$ essentially behaves like $(X_*,Y_*)$, i.e., a data point that is independent of training data.

Conditions like (\ref{eq:hypothesis-concentration_over_calibration}) and (\ref{eq:hypothesis-decoupling_over_test_data}) (with small error parameters) hold for many types of dependent data, such as strictly stationary processes whose ``memory'' is not too strong. Examples include finite Markov chains, hidden Markov chains and ARMA-type processes (both linear and nonlinear). A broader class of such processes can be described under so-called $\beta$-mixing conditions, which we discuss in detail in Section \ref{sec:stationary_beta_mixing_data}. Under these conditions, the usual marginal coverage guarantees can be recovered for split CP.

\begin{theorem}[Marginal coverage over test data]\label{theorem:marginal_coverage_test_data}
    Given $\alpha\in(0,1)$ and $\delta_{\cal}>0$, if conditions (\ref{eq:hypothesis-concentration_over_calibration}) and (\ref{eq:hypothesis-decoupling_over_test_data}) hold, then, for all $i\in I_{\test}$:
    \begin{equation} \label{eq:marginal_coverage_guarantee}
        \P[Y_i\in C_{1-\alpha}(X_i)]\geq 1-\alpha - \varepsilon_{\cal} - \delta_{\cal}-\varepsilon_{\train}.
    \end{equation}
    Additionally, if $\widehat{s}_{\train}(X_*,Y_*)$ almost surely has a continuous distribution conditionally on the training data, then
    \begin{equation*}
       \left|\P[Y_i\in C_{1-\alpha}(X_i)]- (1-\alpha)\right|\leq  \varepsilon_{\cal} + \delta_{\cal} + \varepsilon_{\train}.
    \end{equation*}
\end{theorem}

To also guarantee empirical coverage, suppose that instead of the decoupling assumption (\ref{eq:hypothesis-decoupling_over_test_data}), there exists concentration of the empirical cumulative distribution function of the nonconformity score over the test data, that is, there exist $\varepsilon_{\test},\delta_{\test}\in(0,1)$ such that
    \begin{equation} \label{eq:hypothesis-concentration_of_empirical_cdf}
        \P\left[ \Biggl|P_{q,\train}- \frac{1}{n_{\test}} \sum_{i\in I_{\test}}\Ind{\widehat{s}_{\train}(X_i,Y_i)\leq q_{\train }}\Biggr|\leq \varepsilon_{\test}\right] \geq 1-\delta_{\test}.
    \end{equation}

\begin{theorem}[Empirical coverage over test data]\label{theorem:empirical_coverage_test_data}
    Given $\alpha\in(0,1)$, $\delta_{\cal}>0$ and $\delta_{\test}>0$, if (\ref{eq:hypothesis-concentration_over_calibration}) and (\ref{eq:hypothesis-concentration_of_empirical_cdf}) hold, then:
\begin{equation*}
    \P\left[\frac{1}{n_{\test}} \sum_{i \in I_{\test}} \Ind{Y_i\in C_{1-\alpha}(X_i)} \geq 1 - \alpha - \eta\right] \geq 1 - \delta_{\cal}-\delta_{\test},
\end{equation*}
where $\eta = \varepsilon_{\cal}+\varepsilon_{\test}.$ Additionally, if $\widehat{s}_{\train}(X_*,Y_*)$ almost surely has a continuous distribution conditionally on the training data, then:
\begin{equation*}
    \P\left[\Biggl|\frac{1}{n_{\test}} \sum_{i \in I_{\test}} \Ind{Y_i\in C_{1-\alpha}(X_i)} - (1 - \alpha)\Biggr| \leq  \eta\right] \geq 1 - 2\delta_{\cal}-2\delta_{\test}.
\end{equation*}
\end{theorem}

While the purpose of the above theorems is to extend split conformal guarantees to non-exchangeable data, they also readily apply to the iid case. Indeed, it is straightforward to show that, in such case, it suffices to take $\varepsilon_{\cal}=\sqrt{(2 n_{\cal})^{-1}\log(2/\delta_{\cal})}$ and $\varepsilon_{\test}=\sqrt{(2n_{\test})^{-1} \log(2/\delta_{\test})}$.

\subsection{Conditional Guarantees}\label{sec:conditional_guarantees}

Obtaining a conditional version of (\ref{eq:marginaliid}) and (\ref{eq:empiricaliid}) is of interest in many cases. \citet{barber2020limits} prove that coverage is not generally attainable, even for iid data. On the positive side, they show they can be achieved when conditioning on sets of finite VC dimension that are not too small. Our goal is to show similar guarantees for split conformal prediction under non-exchangeable data.

First, conditional versions of assumptions (\ref{eq:hypothesis-concentration_over_calibration}) and (\ref{eq:hypothesis-decoupling_over_test_data}) are needed. For concentration over the calibration data, suppose there exist $\delta_{\cal}$ and $\varepsilon_{\cal}\in(0,1)$ such that, for ${P}_{q,\train}(A) \coloneqq \P[\widehat{s}_{\train}(X_*,Y_*)\leq q_{\train}\mid (X_i,Y_i)_{i\in I_{\train}},X_*\in A]$ and $n_{\cal}(A)\coloneqq \#\{i\in I_{\cal}\,:\,X_i\in A\}$,
    \begin{equation} \label{eq:hypothesis-concentration_over_calibration-conditional}
        \P\Biggl[\sup_{A\in\sA}\Biggl|\frac{1}{\max\{n_{\cal}(A),1\}}\sum_{i\in I_{\cal}(A)}\Ind{\widehat{s}_{\train}(X_i,Y_i)\leq q_{\train }} - P_{q,\train}(A)\Biggr|\leq \varepsilon_{\cal}\Biggr] \geq 1-\delta_{\cal}.
    \end{equation}
For a conditional version of marginal decoupling, assume there exists $\varepsilon_{\train}\in(0,1)$ such that
\begin{equation} \label{eq:hypothesis-decoupling_over_test_data-conditional}
    \left|\P[\widehat{s}_{\train}(X_k,Y_k)\leq q_{\train}\mid X_k\in A] - \P[\widehat{s}_{\train}(X_*,Y_*)\leq q_{\train}\mid X_*\in A]\right|\leq \varepsilon_{\train}.
\end{equation}
These conditions suffice for conditional marginal coverage.

\begin{theorem}[Conditional coverage over test data]\label{theorem:marginal_coverage_test_data-conditional}
Given $\alpha\in(0,1)$ and $\delta_{\cal}>0$, if (\ref{eq:hypothesis-concentration_over_calibration-conditional}) and (\ref{eq:hypothesis-decoupling_over_test_data-conditional}) hold, then, for each $A\in \sA \subset \sX$  and any $i\in I_{\test}$:
    \begin{equation}\label{eq:conditional_coverage_guarantee}
        \P[Y_i\in C_{1-\alpha}(X_i;A)\mid X_i\in A]\geq 1-\alpha - \varepsilon_{\cal} - \delta_{\cal}-\varepsilon_{\train}.
    \end{equation}
Additionally, if $\widehat{s}_{\train}(X_*,Y_*)$ almost surely has a continuous distribution conditionally on the training data, then:
    \[
        \left|\P[Y_i\in C_{1-\alpha}(X_i;A)\mid X_i\in A]- (1-\alpha)\right|\leq  \varepsilon_{\cal} + \delta_{\cal} + \varepsilon_{\train}.
    \]
\end{theorem}

Appendix~\ref{appendix:conditional_guarantees} includes a conditional version of the empirical coverage guarantee.

\subsection{Application to Stationary \texorpdfstring{$\beta$}{Beta}-Mixing Data} \label{sec:stationary_beta_mixing_data}

We now apply the framework from Section \ref{sec:general_framework} to the class of stationary $\beta$-mixing processes. This class of non-exchangeable data is broad enough to cover many important applications, such as hidden Markov models and Markov chains \citep{doukhan2012mixing} as well as ARMA and GARCH models \citep{carrasco2002mixing, mokkadem1988mixing}, while still providing explicit error terms in the bounds of Theorems \ref{theorem:marginal_coverage_test_data} and \ref{theorem:empirical_coverage_test_data}.

Recall a sequence of random elements $\{Z_t\}_{t=-\infty}^{\infty}$ of a measurable space $\sZ$ is stationary if its finite-dimensional distributions are time-invariant; that is, for any $t \in \Z$ and $m, k \in \N$,
    \begin{equation*}
    Z_{t:(t+m)} = (Z_t, \ldots, Z_{t+m}) \stackrel{d}{=} (Z_{t+k}, \ldots, Z_{t+m+k}) = Z_{(t+k):(t+m+k)}.
    \end{equation*}
Furthermore, for a stationary stochastic process $\{Z_{t}\}_{t=-\infty}^{\infty}$ and index $a \in \N$, the $\beta$-mixing coefficient of the process at $a$ is defined as
    \begin{equation*}
        \beta(a) = \lVert\P_{-\infty:0, a:\infty} - \P_{-\infty:0}\otimes\P_{a:\infty}\rVert_{\TV},
    \end{equation*}
where $\|\cdot\|_{\TV}$ denotes the total variation norm, and $\P_{-\infty:0, a:\infty}$ is the joint distribution of the blocks $(Z_{-\infty:0}, Z_{a:\infty})$. The process is said to be $\beta$-mixing if $\beta(a) \to 0$ when $a \to \infty$.

The $\beta$-mixing condition allows us to replace independence with asymptotic independence and still retain some important concentration results. In particular, the so-called Blocking Technique \citep{yu1994rates, mohri2010stability, kuznetsov2017generalization} allows one to compare a $\beta$-mixing process with another process made of independent blocks. The results below generally follow from combining the Blocking Technique with decoupling arguments and Bernstein's concentration inequality.
Crucially, the guarantees might depend on block sizes, but split CP itself does not. This is an advantage over CP variants \citep{chernozhukov2018blocks}.

The sets of parameters we optimize over are defined as follows:
\begin{align*}
    F_{\cal} &= \{(a,m,r)\in \N_{>0}^3\ :\ 2ma = n_{\cal}-r+1,\ \delta_{\cal}>4(m-1)\beta(a)+\beta(r)\},\\
    F_{\test} &= \{(a,m,s)\in \N_{>0}^2\times \N\ :\ 2ma = n_{\test}-s,\ \delta_{\test}>4(m-1)\beta(a)+\beta(n_{\cal})\}.
\end{align*}

These two sets correspond to block size choices in the calibration and test sets, respectively. For the calibration set, define the error term as follows:
\begin{align}\label{eq:eps_cal_beta}
        \varepsilon_{\cal} \coloneqq \inf_{(a,m,r)\in F_{\cal}}&\Biggl\{ \widetilde{\sigma}(a)\sqrt{\frac{4}{n_{\cal}-r+1} \log\biggl(\frac{4}{\delta_{\cal}-4(m-1)\beta(a)-\beta(r)}\biggr)}\\
            &\quad + \frac{1}{3m}\log\biggl(\frac{4}{\delta_{\cal}-4(m-1)\beta(a)-\beta(r)}\biggr) + \frac{r-1}{n_{\cal}}\Biggr\}\nonumber,
\end{align}
where $\widetilde{\sigma}(a)=\sqrt{\frac{1}{4} +\frac{2}{a}\sum_{j=1}^{a-1}(a-j)\beta(j)}$. Similarly, we define the test error correction factor for a stationary $\beta$-mixing process as
\begin{align}\label{eps_test_beta}
        \varepsilon_{\test}= \inf_{(a,m,s)\in F_{\test}}&\Biggl\{ \widetilde{\sigma}(a)\sqrt{\frac{4}{n_{\test}} \log\biggl(\frac{4}{\delta_{\test}-4(m-1)\beta(a)-\beta(n_{\cal})}\biggr)}\\
         &\quad + \frac{1}{3m}\log\biggl(\frac{4}{\delta_{\test}-4(m-1)\beta(a)-\beta(n_{\cal})}\biggr) + \frac{s}{n_{\rm test}}\Biggr\}\nonumber.
\end{align}

We emphasize that this optimization of block sizes plays a role exclusively on the coverage guarantees below; the split CP algorithm itself remains unchanged.

With $\varepsilon_{\cal}$ as above, Theorem \ref{theorem:marginal_coverage_test_data} yields the following result for stationary $\beta$-mixing processes:

\begin{theorem}[Marginal coverage: stationary $\beta$-mixing processes]\label{theorem:general_framework_1_beta_mixing}
    Suppose  the sample $(X_i, Y_i)_{i=1}^{n}$ is stationary $\beta$-mixing. Then given $\alpha\in(0,1)$ and $\delta_{\cal}>0$, for $i\in I_{\test}$,
    \begin{equation*}
        \P\left[Y_i\in C_{1-\alpha}(X_i)\right] \geq 1-\alpha-\eta,
    \end{equation*}
with $\eta = \varepsilon_{\cal} + \varepsilon_{\train} +\delta_{\cal},$ where $\varepsilon_{\cal}$ is as  in (\ref{eq:eps_cal_beta}) and $\varepsilon_{\train} = \beta(i-n_{\train}).$
Additionally, if $\widehat{s}_{\train}(X_*,Y_*)$ almost surely has a continuous distribution conditionally on the training data:
\[|\P[Y_i\in C_{1-\alpha}(X_i)]- (1-\alpha)|\leq  \eta.\]
\end{theorem}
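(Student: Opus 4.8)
The plan is to deduce Theorem \ref{theorem:general_framework_1_beta_mixing} from the general Theorem \ref{theorem:marginal_coverage_test_data}: it suffices to check that, under the stationary $\beta$-mixing assumption, Assumption \ref{assumption:decoupling_test} holds with $\varepsilon_{\test}=\varepsilon_{\train}=\beta(i-n_{\train})$ and Assumption \ref{assumption:concentration_cal} holds with the pair $(\varepsilon_{\cal},\delta_{\cal})$, where $\varepsilon_{\cal}$ is the infimum in (\ref{eq:eps_cal_beta}). Once these are in place, the first inequality is exactly the bound of Theorem \ref{theorem:marginal_coverage_test_data} with $\eta=\varepsilon_{\cal}+\delta_{\cal}+\varepsilon_{\train}$, and the continuous-distribution addendum follows verbatim from the corresponding addendum in Theorem \ref{theorem:marginal_coverage_test_data}, applied with the same parameters.

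Assumption \ref{assumption:decoupling_test} is the easy part. Fix $i\in I_{\test}$ and a measurable $q$. The event $\{\widehat{s}_{\train}(X_i,Y_i)\leq q_{\train}\}$ is measurable with respect to the pair of blocks $\big((X_j,Y_j)_{j\in I_{\train}},(X_i,Y_i)\big)$, which are separated by $i-n_{\train}$ steps. By the definition of the $\beta$-mixing coefficient (after a time shift), the joint law of these two blocks is within total variation $\beta(i-n_{\train})$ of the product of its marginals; under that product law $(X_i,Y_i)$ is replaced by an independent copy distributed as $(X_*,Y_*)$ (using stationarity), and the probability of the event becomes exactly $\Pr[\widehat{s}_{\train}(X_*,Y_*)\leq q_{\train}]=\E[P_{q,\train}]$. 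Hence $|\Pr[\widehat{s}_{\train}(X_i,Y_i)\leq q_{\train}]-\E[P_{q,\train}]|\leq\beta(i-n_{\train})$, so Assumption \ref{assumption:decoupling_test} holds with $\varepsilon_{\test}=\varepsilon_{\train}$.

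The main work is Assumption \ref{assumption:concentration_cal}, which I would establish by the Blocking Technique combined with Bernstein's inequality. First, \emph{decouple from the training data}: since $I_{\cal}$ is adjacent to $I_{\train}$, I would discard the first $r-1$ indices of $I_{\cal}$ and work with the $n_{\cal}-r+1$ remaining indices $\{n_{\train}+r,\dots,n_{\train}+n_{\cal}\}$; discarding costs at most $(r-1)/n_{\cal}$ in the empirical average, the last term of (\ref{eq:eps_cal_beta}). The retained calibration block is $r$ steps from the training block, so its joint law with $(X_j,Y_j)_{j\in I_{\train}}$ is within total variation $\beta(r)$ of the law in which that block is replaced by an independent copy; on this coupled space one conditions on the training data, so that $q_{\train}$ is a fixed real number, $\widehat{s}_{\train}$ a fixed function, and the retained calibration block is a stationary $\beta$-mixing sequence independent of the conditioning, whose summands $\Ind{\widehat{s}_{\train}(X_j,Y_j)\leq q_{\train}}$ have conditional mean $P_{q,\train}$. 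Next, \emph{block and replace with independent blocks}: partition the $n_{\cal}-r+1=2ma$ retained indices into $2m$ consecutive blocks of length $a$, grouped into $m$ "odd" and $m$ "even" blocks; iterating the $\beta$-mixing bound, the law of the $m$ odd (resp. even) block-sums of the indicators is within $(m-1)\beta(a)$ total variation of the law in which these block-sums are independent. Then, \emph{apply Bernstein on the independent blocks}: each block-sum lies in $[0,a]$ and, via the covariance bound $|\mathrm{Cov}(\Ind{\cdot},\Ind{\cdot})|\leq\beta(\cdot)$ within a block, has variance at most $a\,\widetilde{\sigma}(a)^2$ with $\widetilde{\sigma}(a)$ as in (\ref{eq:variance_beta_mixing}); Bernstein applied to the $m$ independent block-sums, separately for odd and even and then a union bound, gives a deviation of the form $\widetilde{\sigma}(a)\sqrt{4\log(4/\rho)/(n_{\cal}-r+1)}+\log(4/\rho)/(3m)$ off $P_{q,\train}$, on a conditional event of probability at least $1-\rho$ with $\rho=\delta_{\cal}-4(m-1)\beta(a)-\beta(r)$. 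Finally, \emph{account for the error budget}: adding back the replacement cost $4(m-1)\beta(a)$ (two-sidedly, for odd and even) and the decoupling cost $\beta(r)$, then integrating over the training data, yields the statement with failure probability $\delta_{\cal}$; optimizing over $(a,m,r)\in F_{\cal}$ produces $\varepsilon_{\cal}$ as in (\ref{eq:eps_cal_beta}).

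The step I expect to be most delicate is keeping the decoupling-from-training and the odd/even block replacement rigorous at once — tracking precisely which randomness is frozen, and verifying that the Bernstein bound holds conditionally on the training data with constants independent of the arbitrary function $q$, so that the resulting $\varepsilon_{\cal},\delta_{\cal}$ are valid uniformly as Assumption \ref{assumption:concentration_cal} demands. The bookkeeping of total-variation costs and the exact Bernstein constants (the factors $4$ and $1/3$) is routine once the coupling is correctly set up.
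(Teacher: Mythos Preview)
Your proposal is correct and follows essentially the same route as the paper: the paper verifies Assumption \ref{assumption:decoupling_test} via Proposition \ref{proposition:assumption_test_to_iid_holds} (the one-step $\beta$-mixing decoupling you describe) and Assumption \ref{assumption:concentration_cal} via Proposition \ref{proposition:assumption_concentration_cal_holds}, which discards the first $r-1$ calibration points, decouples from training at cost $\beta(r)$, and then invokes a Bernstein-plus-blocking lemma (Lemma \ref{lemma:bound_beta_mixing-bernstein}, using Lemma \ref{lemma:bound_beta_mixing}) with the same odd/even union bound and block-variance estimate you outline. The result then follows from Theorem \ref{theorem:marginal_coverage_test_data} exactly as you say.
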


Under certain assumptions over the dependence of the processes, the stationary $\beta$-mixing bounds given by (\ref{eq:eps_cal_beta}) are of the same asymptotic order as the corresponding iid bounds. Indeed, if $\beta(k)\leq k^{-b}$ and $\delta\geq n_{\cal}^{-c}$ for $b>1,c>0$, with $1+2c<b$, as long as $m= o(n_{\cal}^{(b-c)/(b+1)})$ and $\sqrt{n_{\cal}\log(n_{\cal})}= o\left(m\right)$, the bounds are of the same order. This is satisfied, for example, if $m=n_{\cal}^{\lambda}$, $a= n_{\cal}^{1-\lambda}/2$ with $1/2<\lambda<(b-c)/(b+1)$.

Additionally, with $\varepsilon_{\test}$ as above, Theorem \ref{theorem:empirical_coverage_test_data} yields the following:
\begin{theorem}[Empirical coverage: stationary $\beta$-mixing processes]\label{theorem:general_framework_2_beta_mixing}
    Suppose the sample $(X_i, Y_i)_{i=1}^{n}$  is  stationary $\beta$-mixing. Then given $\alpha\in(0,1)$, $\delta_{\cal}>0$ and $\delta_{\test}>0$
   \begin{equation*}
   \P\left[\frac{1}{n_{\test}} \sum_{i \in I_{\test}} \Ind{Y_i\in C_{1-\alpha}(X_i)} \geq 1 - \alpha - \eta\right] \geq 1 - \delta_{\cal}-\delta_{\test},
\end{equation*}
with $\eta = \varepsilon_{\cal}+\varepsilon_{\test}$, and $\varepsilon_{\cal}$ and $\varepsilon_{\test}$ defined in (\ref{eq:eps_cal_beta}) and (\ref{eps_test_beta}). Additionally, if $\widehat{s}_{\train}(X_*,Y_*)$ almost surely has a continuous distribution conditionally on the training data, then:
\begin{equation*}
    \P\left[\Biggl|\frac{1}{n_{\test}} \sum_{i \in I_{\test}} \Ind{Y_i\in C_{1-\alpha}(X_i)} - (1 - \alpha)\Biggr| \leq  \eta\right] \geq 1 - 2\delta_{\cal}-2\delta_{\test}.
\end{equation*}
\end{theorem}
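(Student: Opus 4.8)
The plan is to obtain Theorem~\ref{theorem:general_framework_2_beta_mixing} as a corollary of the general empirical-coverage bound, Theorem~\ref{theorem:empirical_coverage_test_data}: it suffices to check that, for a stationary $\beta$-mixing sample, Assumption~\ref{assumption:concentration_cal} holds with the $\varepsilon_{\cal}$ of~(\ref{eq:eps_cal_beta}) and Assumption~\ref{assumption:concentration_test} holds with the $\varepsilon_{\test}$ of~(\ref{eps_test_beta}), for the prescribed $\delta_{\cal},\delta_{\test}$; then Theorem~\ref{theorem:empirical_coverage_test_data} gives the one-sided bound with $\eta=\varepsilon_{\cal}+\varepsilon_{\test}$ and, under the conditional-continuity hypothesis (carried over verbatim), the two-sided bound with probability $1-2\delta_{\cal}-2\delta_{\test}$. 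To set the verification up, I would fix an arbitrary measurable $q:(\X\times\Y)^{n_{\train}}\to\R$, write $W\coloneqq(X_i,Y_i)_{i\in I_{\train}}$, and for a generic value $\bar z$ of $W$ put $f_{\bar z}(x,y)\coloneqq\Ind{s(\bar z,(x,y))\le q(\bar z)}$, a $\{0,1\}$-valued function. Then $q_{\train}=q(W)$ and, since $(X_*,Y_*)$ is independent of $W$ with the common marginal, $P_{q,\train}=g(W)$ where $g(\bar z)\coloneqq\E[f_{\bar z}(X_*,Y_*)]$. Thus Assumption~\ref{assumption:concentration_cal} asks for concentration of $\frac1{n_{\cal}}\sum_{i\in I_{\cal}}f_W(X_i,Y_i)$ around $g(W)$, and Assumption~\ref{assumption:concentration_test} the same with $I_{\test}$ for $I_{\cal}$. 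Each such statement I would prove conditionally on $W=\bar z$; the resulting bound turns out uniform in $\bar z$, because only the range $[0,1]$ of $f_{\bar z}$ and the $\beta$-mixing covariance control enter, never the identity of $f_{\bar z}$.

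For the calibration statement, fix $(a,m,r)\in F_{\cal}$ and carry out three reductions. (i)~Discard the first $r-1$ indices of $I_{\cal}$: this perturbs the empirical average by at most $(r-1)/n_{\cal}$ and leaves a block $I_{\cal}'$ of exactly $2ma$ consecutive indices whose left endpoint is $r$ steps from $I_{\train}$. (ii)~Decouple $(X_i,Y_i)_{i\in I_{\cal}'}$ from $W$: being $r$ apart, their joint law is within total variation $\beta(r)$ of the product of its marginals, so at the cost of an extra $\beta(r)$ in the failure probability one may treat $(X_i,Y_i)_{i\in I_{\cal}'}$ as a stationary length-$2ma$ block independent of $W$, whose coordinatewise mean of $f_{\bar z}$ is exactly $g(\bar z)$. (iii)~Apply the Blocking Technique: partition $I_{\cal}'$ into $2m$ consecutive length-$a$ blocks, pass to the odd- and even-indexed sub-collections of $m$ block-averages, and replace each by an i.i.d.\ sequence, which accounts for the $4(m-1)\beta(a)$ budget appearing in $F_{\cal}$. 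On the resulting i.i.d.\ block-averages $\widetilde U_1,\dots,\widetilde U_m\in[0,1]$ with mean $g(\bar z)$, bound $\V(\widetilde U_j)=a^{-2}\V(\sum_{i\in B}f_{\bar z}(X_i,Y_i))\le a^{-1}\widetilde\sigma(a)^2$, using $\V(f_{\bar z})\le\tfrac14$ and the standard estimate $|\mathrm{Cov}(f_{\bar z}(X_i,Y_i),f_{\bar z}(X_j,Y_j))|\le\beta(|i-j|)$ for $[0,1]$-valued functions; this is precisely~(\ref{eq:variance_beta_mixing}). Bernstein's inequality for $\sum_j(\widetilde U_j-g(\bar z))$, with its tail set to $\delta_{\cal}-4(m-1)\beta(a)-\beta(r)$ and inverted for the deviation, plus the $(r-1)/n_{\cal}$ from step (i), gives exactly the bracketed expression in~(\ref{eq:eps_cal_beta}); taking the infimum over $F_{\cal}$ yields $\varepsilon_{\cal}$, hence Assumption~\ref{assumption:concentration_cal}.

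The test statement is the same argument with $I_{\test}$ replacing $I_{\cal}$, with one change: $I_{\test}$ is already separated from $I_{\train}$ by the $n_{\cal}$ calibration indices, so no buffer needs to be discarded and the decoupling from $W$ costs $\beta(n_{\cal})$ (an upper bound for the true gap $n_{\cal}+1$) rather than $\beta(r)$, while the $s$ indices removed in $F_{\test}$ only serve to make the retained block length equal to $2ma$, contributing $s/n_{\test}$ in the end. The Blocking Technique and Bernstein's inequality then produce $\varepsilon_{\test}$ as in~(\ref{eps_test_beta}), establishing Assumption~\ref{assumption:concentration_test}; together with the calibration bound this completes the reduction.

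I expect the main obstacle to be the bookkeeping in the decoupling step (ii): one must quantify the dependence of the arbitrary threshold $q_{\train}$ on $W$ \emph{simultaneously} with the dependence of the calibration (resp.\ test) block on $W$, which is exactly why the order ``condition on $W$, decouple via $\beta(r)$ (resp.\ $\beta(n_{\cal})$), then block'' is forced and why the bound must be uniform over all measurable $q$. A secondary difficulty is pinning down the variance proxy $\widetilde\sigma(a)$---proving and correctly applying the covariance-versus-$\beta$ inequality so that within-block dependence is absorbed into~(\ref{eq:variance_beta_mixing})---and then balancing the Bernstein deviation against the blocking penalty $4(m-1)\beta(a)$ when optimizing over $F_{\cal}$ and $F_{\test}$; that last balance is precisely the mechanism behind the ``matches the i.i.d.\ rate'' behaviour recorded in Remark~\ref{rem:independence_rates}.
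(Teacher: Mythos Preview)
Your proposal is correct and follows essentially the same route as the paper: reduce to Theorem~\ref{theorem:empirical_coverage_test_data} by verifying Assumptions~\ref{assumption:concentration_cal} and~\ref{assumption:concentration_test} for stationary $\beta$-mixing data, which the paper does in Propositions~\ref{proposition:assumption_concentration_cal_holds} and~\ref{proposition:assumption_concentration_test_holds} via exactly the three ingredients you list (buffer of size $r-1$, decoupling from the training block at cost $\beta(r)$ or $\beta(n_{\cal})$, then the Blocking Technique plus Bernstein with the variance proxy~(\ref{eq:variance_beta_mixing})).

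One small correction to your bookkeeping: the order you state, ``condition on $W$, decouple via $\beta(r)$, then block,'' is backwards. The $\beta$-mixing coefficient controls the total-variation distance between the \emph{joint} law of $(W,(X_i,Y_i)_{i\in I_{\cal}'})$ and the product of marginals; it does \emph{not} bound $\|\,\mathrm{Law}((X_i,Y_i)_{i\in I_{\cal}'}\mid W=\bar z)-\mathrm{Law}((X_i,Y_i)_{i\in I_{\cal}'})\,\|_{\TV}$ uniformly in $\bar z$. The paper (and your own step~(ii), read literally) first passes from $\P$ to the product measure $\P_*=\P_1^{n_{\train}}\otimes\P_{n_{\train}+r}^{n_{\train}+n_{\cal}}$ at unconditional cost $\beta(r)$, and only \emph{then} conditions on $W$ under $\P_*$, where the calibration block is genuinely independent of $W$ and Lemma~\ref{lemma:bound_beta_mixing-bernstein} applies with $f_{\bar z}$ fixed. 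With that order fixed, your argument matches the paper's.
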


We note in passing that the expression in (\ref{eq:eps_cal_beta}) follows from a stationary $\beta$-mixing version of Bernstein's inequality which might be of independent interest. See Appendix~\ref{appendix:stationary_beta_mixing_data} for the proof and for conditional extensions of the above results.

\subsection{Extensions to Non-Split CP Methods and Non-Stationary Data}\label{sec:extensions}

The results obtained in previous subsections also apply when the data is non-stationary. For brevity, we focus on marginal coverage. Our analysis is partly inspired by the recent work of \citet{barber2023conformal}.

Replace the pair $(X_*,Y_*)$ with an auxiliary process $(X_{*,i},Y_{*,i})_{i\in [n]}$ that is an independent copy of the original data $(X_i,Y_i)_{i\in [n]}$. Let $N_{\cal}$ be a random number, uniformly distributed over $I_{\cal}$, independently of the problem data and auxiliary process. For $j \in I_{\test}$, the quantity $\delta^{(j)} \coloneqq \| \mathrm{Law}(X_{j},Y_{j}) - \mathrm{Law}(X_{N_{\cal}},Y_{N_{\cal}})\|_{\rm TV}$, for $j\in I_{\test}$,
measures how far the distribution of $(X_j,Y_j)$ is to that of a randomly chosen point in the calibration data set (i.e., a measure of distributional drift).

For marginal coverage, we take the random variable $q_{\train}$ as before, but replace (\ref{eq:p_q_train}) by a time-inhomogeneous version, that is, $P_{q,\train}^{(j)} \coloneqq \P[\widehat{s}_{\train}(X_{*,j},Y_{*,j})\leq q_{\train}\mid (X_i,Y_i)_{i\in I_{\train}}]$.
Furthermore, (\ref{eq:hypothesis-concentration_over_calibration}) and (\ref{eq:hypothesis-decoupling_over_test_data}) are also replaced with time-inhomogeneous versions:
\begin{equation}\label{eq:concentrationnonstat}
        \P\Biggl[\Biggl|\frac{1}{n_{\cal}}\sum_{i\in I_{\cal}}(\Ind{\widehat{s}_{\train}(X_i,Y_i)\leq q_{\train}} - P^{(i)}_{q,\train}) \Biggr|\leq \varepsilon_{\cal}\Biggr] \geq 1-\delta_{\cal},
\end{equation}
and, for $j \in I_{\test}$,
\begin{equation}\label{eq:decouplingnonstat}
        \lvert\P[\widehat{s}_{\train}(X_j,Y_j)\leq q_{\train}] - \P[\widehat{s}_{\train}(X_{*,j},Y_{*,j})\leq q_{\train}]\rvert\leq \varepsilon_{\train}.
\end{equation}

\begin{theorem}[Marginal coverage for split CP on non-stationary data]\label{theorem:marginal_nonstat} Given a level $\alpha\in (0,1)$, if (\ref{eq:concentrationnonstat}) and (\ref{eq:decouplingnonstat}) hold, then for all $j\in I_{\test}$:
\[\P[\widehat{s}_{\train}(X_{j},Y_{j})\leq \widehat{q}_{1-\alpha,\cal}]\geq 1-\alpha-\eta - \delta^{(j)},\]
where $\eta=\varepsilon_{\cal}+\delta_{\cal}+ \varepsilon_{\train}$ is as in Theorem \ref{theorem:marginal_coverage_test_data}.\end{theorem} In particular, we recover Theorem \ref{theorem:marginal_coverage_test_data} up to an error depending on how much distributional drift there is between $j$ and the calibration set. This is similar to the main result in \citet{barber2023conformal}, except that there the authors consider weighted calibration sets.

Finally, our framework also extends to other popular methods in the literature that are not based on split CP, such as rank-one-out (ROO) conformal prediction \citep{lei2018distribution} and risk-controlling prediction sets (RCPS) \citep{bates2021rcps}. ROO calibrates the quantiles used for each test data point by using the remaining test points, so it is different from split CP. RCPS gives a general CP methodology that applies in a variety of settings, including regression, multiclass classification and image segmentation. Importantly, RCPS does not involve nonconformity scores but the construction of nested sets. Details for both of these extensions to non-exchangeable data are given in Appendix~\ref{appendix:extensions}.

\section{Experiments}\label{sec:empirical_studies}

This section studies split CP's empirical performance in several numerical experiments. The first one uses real spatiotemporal climate data to compare split CP's coverage and average interval size to recent alternative conformal algorithms due to \citet{barber2023conformal} and \citet{gibbs2024conformal}. The second experiment benchmarks split CP on synthetic $\beta$-mixing data against a popular conformal approach for time-series \citep{xu2021dynamic,xu2023conformal}. In both cases, split conformal is used with absolute residuals as nonconformity score. The third example involves a hidden Markov model in which the bounds can be calculated explicitly, while the last shows that split CP's marginal and conditional guarantees work with real financial data, even when exchangeability is clearly violated. In these two final examples, as in the $\text{AR}(1)$ experiment (cf. Figure~\ref{fig:intro}), we employ split conformal quantile regression \citep{romano2019conformalized}. The experiments were conducted on a server with 774 GB of RAM and two Intel Xeon Platinum 8354H processors, totalling 8 physical cores and 288 threads. All conformal implementations made use of multiprocessing for better performance. Further implementation details are discussed in Appendix~\ref{appendix:empirical_studies}. Code to reproduce all the figures and tables is available at \url{https://github.com/jv-rv/split-conformal-nonexchangeable}.

\textit{Example 1 (Spatiotemporal climate data)} Let $Y_{t,l} \in \R$ be the 14-day forward rolling average temperature in degrees Celsius, with $t$ representing the first measurement day included in the average and $l \in L$ a location on Earth defined by latitudes and longitudes discretized in a $1.5\degree \times 1.5\degree$ grid totalling 7512 distinct locations. For all grid locations, measurements from the start of 1979 through the end of 2022 were retrieved via the National Oceanic and Atmospheric Administration \citep{noaa-ncep-cpc-temperature-daily}.

It will be notationally convenient to split the dates in year, month and day, so we consider instead the equivalent representation $Y_{y,m,d,l}$. A standard procedure in the meteorological literature to predict temperatures for a given date $t$ and location $l$, called climatology, is to average the temperatures observed on the same day, month and location over the previous 30 years \citep{hwang2019improving,arguez2012noaa}. More precisely, we take $X_{y,m,d,l} \coloneqq (Y_{y-j,m,d,l})_{j=1}^{30}$ as features and $\hat{\mu}(\cdot) = \Avg(\cdot)$ as the prediction model. Since 30 years (1979--2008) of temperature data are needed for the features, our response series starts on 2009.

Note that the model $\hat{\mu}$ is fixed \emph{a priori} and needs no training data, so $I_{\train} = \emptyset$. For a new test point $Y_{y_*,m_*,d_*,l_*}$, the calibration set $I_{\cal}$ is taken as all the previous observations for that same day and month and the absolute residual is used as nonconformity score function. Thus, the set of nonconformity scores evaluated on calibration data is given by
\begin{equation}
\{ \lvert Y_{y,m,d,l} - \hat{\mu}(X_{y,m,d,l}) \rvert : y < y_*,\; m = m_*,\; d = d_*,\; l \in L\}.
\label{eq:spatiotempora-calibration-residuals}
\end{equation}

The intuition behind this choice is that temperature measurements for a given location, day and month can be reasonably modeled as $\beta$-mixing and stationary over the years \citep{gardner2006cyclostationarity}.

Split conformal prediction is efficient in generating prediction intervals in this scenario. At any given date, the calibration set for all 7512 geographic points is the same and taking the quantile of (\ref{eq:spatiotempora-calibration-residuals}) allows us to build thousands of prediction intervals at once. We follow the online sequential procedure through time implied by (\ref{eq:spatiotempora-calibration-residuals}), with no additional overhead due to the spatial dependence. Figure~\ref{fig:splitcp-climatology-pred_intervals} shows the temperature measurements in the center, from which the spatial dependence becomes evident. The prediction intervals are represented by their minimum (left) and maximum (right). Coverage is generally attained and intervals are narrow enough to be useful.

\begin{figure}[t]
    \centering
    \includegraphics[width=\textwidth]{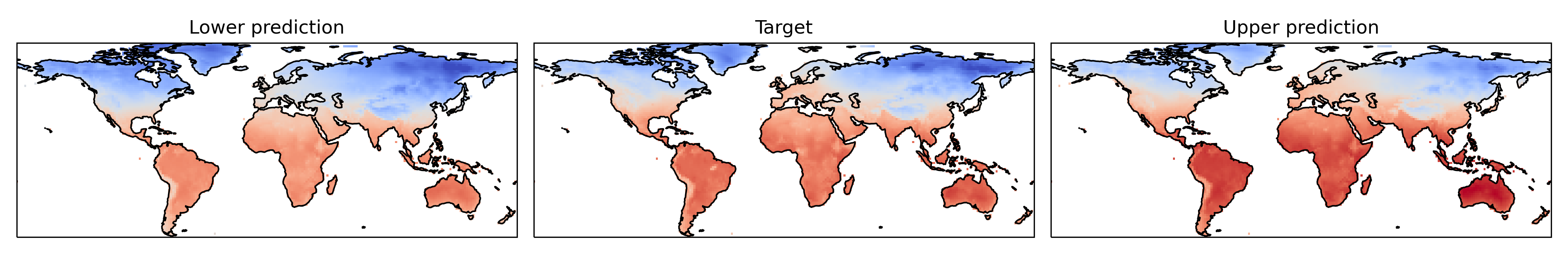}
    \caption{Average temperatures around the globe on 2022-12-31 (center), split CP's lower prediction (left) and upper prediction (right). Darker blues (reds) indicate lower (higher) temperatures. Approximately 92.6\% of the 7512 grid points are inside the prediction interval, which is close to the 90\% prescribed coverage. Averaging over all days, coverage is of 90.9\%}
    \label{fig:splitcp-climatology-pred_intervals}
\end{figure}

We compare the standard and widespread split CP method \citep{papadopoulos2002inductive,vovk2005algorithmic,lei2018distribution} against recent developments tailored to non-exchangeable data, namely NexCP due to \citet{barber2023conformal} and dynamically-tuned adaptive conformal inference (DtACI) due to \citet{gibbs2024conformal}, which we briefly describe next.

\citet{barber2023conformal} proposed variations of conformal prediction to deal with non-exchangeability. We focus on the split version of their NexCP algorithm, that is identical to split CP apart from the fact that quantiles are weighted to give more importance to more dependent residuals. In the original paper, this novel method is evaluated on time series data and shows improvement on coverage for data presenting distribution drift. For the non-exchangeable violations studied in our work, however, one would expect standard split CP to work well enough.

Using NexCP for spatiotemporal data requires a weighting scheme that takes into account both space and time. We deal with the time dimension as in the original paper via $\rho_{\mathrm{time}}^{y_* - y}$ for some decay parameter $\rho_{\mathrm{time}} \in (0, 1]$, giving less weight to observations from the distant past. In a similar vein, we employ the exponential decay $\rho_{\mathrm{space}}^{d(l_*, l)}$ for $\rho_{\mathrm{space}} \in (0, 1]$ and $d(l_*, l)$ a distance between the test point location $l_*$ and the residual location $l$. A good approximation for the distance of two points on Earth defined in terms of latitudes and longitudes is achieved by assuming a spherical surface and making use of the haversine formula. We employ this procedure so that $d\colon \R^2 \times \R^2 \to [0, \pi]$ is the angular distance between $l_*$ and $l$. The final weights are given by $\rho_{\mathrm{time}}^{y_* - y} \cdot \rho_{\mathrm{space}}^{d(l_*, l)}$. Calculating a weighted quantile has average time complexity $O(n_{\cal} \log n_{\cal})$ in contrast to $O(n_{\cal})$ for the unweighted case. Moreover, NexCP must calculate a different quantile for each one of the 7512 geographic points, while Split CP makes one single calculation per time step. Therefore, one would expect NexCP to be much slower than Split CP for spatiotemporal data, even when parallelized (cf. Table~\ref{table:benchmarks}).

\citet{gibbs2024conformal} introduced dynamically-tuned adaptive conformal inference (DtACI) as an online learning method for constructing prediction intervals with a target coverage. As usual in the online setting, it is assumed that one new test point arrives at each time step. In the case of spatiotemporal data, with all grid points arriving at once, a reasonable strategy is to consider many individual time series. Indeed, this is how Covid-19 predictions are dealt with in the original paper and how we proceed.

\begin{table}[t]
\footnotesize
\centering
\begin{tabular}{ccccccc}

\toprule
\multirow{2}{*}{Method}      & \multirow{2}{*}{Hyperparams}                                             & \multicolumn{3}{c}{Coverage}     & \multirow{2}{*}{Interval size} & \multirow{2}{*}{Avg time} \\
\cmidrule(lr){3-5}           &                                                                          & Avg     & SD (time) & SD (space) &    (\degree C)                 &    (min)                                 \\
\midrule
Split CP                     & None                                                                     & 90.9\%  & 0.033     & 0.130      & 9.605                          & 2.85                          \\
\midrule
\multirow{6}{*}{NexCP}       & $\rho_{\mathrm{time}} = \rho_{\mathrm{space}} = 0.99$                    & 90.9\%  & 0.033     & 0.130      & 9.601                          & \multirow{6}{*}{1884.92}      \\
                             & $\rho_{\mathrm{time}} = \rho_{\mathrm{space}} = 0.9$                     & 90.8\%  & 0.033     & 0.131      & 9.554                          &                               \\
                             & $\rho_{\mathrm{time}} = \rho_{\mathrm{space}} = 0.7$                     & 90.6\%  & 0.033     & 0.132      & 9.446                          &                               \\
                             & $\rho_{\mathrm{time}} = \rho_{\mathrm{space}} = 0.5$                     & 90.3\%  & 0.035     & 0.133      & 9.360                          &                               \\
                             & $\rho_{\mathrm{time}} = \rho_{\mathrm{space}} = 0.3$                     & 90.0\%  & 0.038     & 0.134      & 9.289                          &                               \\
                             & $\rho_{\mathrm{time}} = \rho_{\mathrm{space}} = 0.1$                     & 89.5\%  & 0.042     & 0.136      & 9.205                          &                               \\
\midrule
\multirow{5}{*}{DtACI}        & $\lvert I \rvert = 1$, $\gamma \in \smash{(\tfrac{2^i}{1000})}_{i=0}^8$  & 91.9\%  & 0.034     & 0.096      & 9.658                          & \multirow{5}{*}{75.15}        \\
                             & $\lvert I \rvert = 3$, $\gamma \in \smash{(\tfrac{2^i}{1000})}_{i=0}^8$  & 92.1\%  & 0.034     & 0.090      & 9.779                          &                               \\
                             & $\lvert I \rvert = 5$, $\gamma \in \smash{(\tfrac{2^i}{1000})}_{i=0}^8$  & 92.1\%  & 0.034     & 0.090      & 9.806                          &                               \\
                             & $\lvert I \rvert = 7$, $\gamma \in \smash{(\tfrac{2^i}{1000})}_{i=0}^8$  & 92.1\%  & 0.034     & 0.090      & 9.814                          &                               \\
                             & $\lvert I \rvert = 9$, $\gamma \in \smash{(\tfrac{2^i}{1000})}_{i=0}^8$  & 92.2\%  & 0.034     & 0.091      & 9.815                          &                               \\
\bottomrule

\end{tabular}
    \caption{Comparison of conformal methods in terms of average running time, hyperparameters, average coverage, standard deviation of coverage over time and space, and average interval size. The prescribed level is 90\%. While all methods are close to the nominal level, split CP is simpler, has no hyperparameter and is orders of magnitude faster for the spatiotemporal data considered.\label{table:benchmarks}}
\end{table}

Table~\ref{table:benchmarks} summarizes the comparison between split CP, NexCP and DtACI. The main takeaway is that split CP---a much simpler method, with no hyperparameters, and orders of magnitude faster---is comparable to the benchmarks both in terms of coverage and interval size, working as expected for the spatiotemporal data. NexCP's weighting scheme was useful in reducing the average interval size while maintaining a coverage above the prescribed 90\% level, but the gains were small (less than 4\% for the optimal choice $\rho_{\mathrm{time}}$ and $\rho_{\mathrm{space}}$ equal to 0.3). DtACI reduced the coverage standard deviation over space, but generated larger intervals.

\citet{xu2021dynamic,xu2023conformal} developed ensemble batch prediction intervals (EnbPI) specifically as a conformal method for time-series data. However, it crucially relies on fitting an underlying prediction algorithm on bootstrapped samples of the training data. Recall that the climatology model uses no training data and simply takes the average of the features, so $I_{\train} = \emptyset$ and EnbPI cannot be applied. In order to compare split CP to this important recent development, we conducted the following experiment.

\textit{Example 2 (Hidden random walk on the cycle graph)}
Consider a $\beta$-mixing sequence generated from a random walk on the cycle graph. On any given state, there is a probability $b$ of moving backward, $f$ of moving forward and $s$ of staying still. The dependence increases for larger values of $s$, which makes it more likely to have repeating values. Intuitively, the number of vertices also influence dependence, as it should take a larger sample to account for a larger number of states to be visited. Indeed, for $r \in \N_{>0}$ the $\beta$-mixing coefficient $\beta(r)$ for a cycle of $v$ vertices decays at rate $e^{-r/v^2}$. A Gaussian noise with zero mean and variance of $10^{-6}$ is added to the resulting sequence to avoid draws.

\begin{figure}[tbp]
    \centering
    \includegraphics[width=.7\textwidth]{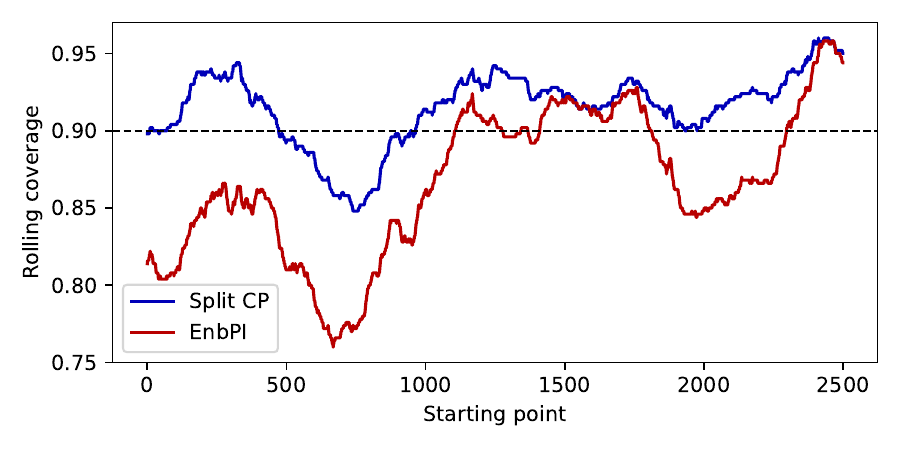}
    \caption{Split CP's rolling coverage for $\beta$-mixing data generated from a random walk on the cycle graph is close to the prescribed 90\% level, whereas EnbPI significantly undercovers at some points. Considering all test points, Split CP's coverage was of 91.4\% with an average interval size of 0.34; EnbPi's coverage was of 86.6\% with an average interval size of 0.15.}
    \label{fig:rolling_coverage-split_cp-enbpi}
\end{figure}

Figure~\ref{fig:rolling_coverage-split_cp-enbpi} compares the rolling coverage $\frac{1}{500} \sum_{i=j}^{499+j} \Ind{Y_i\in C_{1-\alpha}(X_i)}$ for $j\in\{1,\ldots,2501\}$ of split CP and EnbPI, both using a random forest trained on 11 lags to predict the next element in the sequence as model and $\alpha$ set to $0.1$. The implementation of EnbPI used was from \texttt{MAPIE} \citep{taquet2022mapie}. In this experiment, split CP was more than 8 times faster and had a rolling coverage much closer to the prescribed level. Fifteen other random sequences besides the one presented here were generated from the random walk on the cycle and the conclusion was invariably the same.

\begin{figure}[b]
    \centering
    \includegraphics[width=\textwidth]{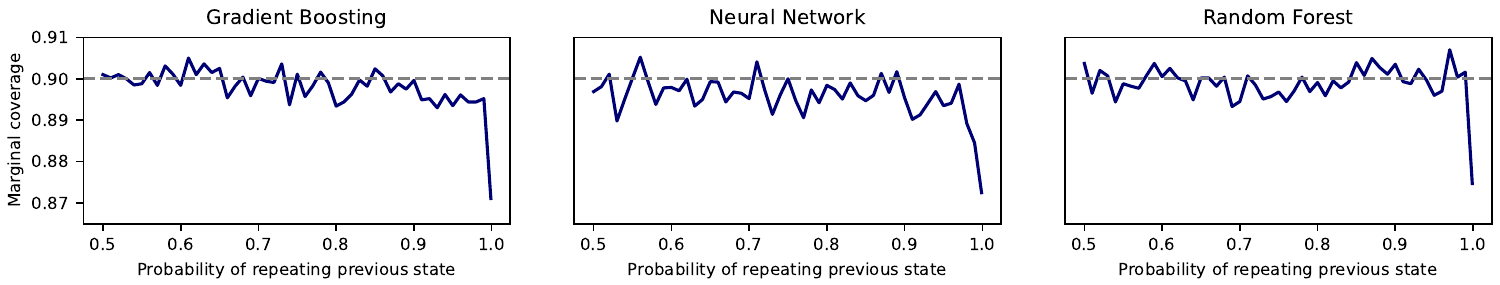}
    \caption{Marginal coverage for two-state hidden Markov model (solid) and nominally prescribed $90\%$ target (dashed) for different levels of dependence and three different models. Coverage is above $89\%$ for all but very extreme levels.}
    \label{fig:cp_test_point-two_state_markov_chain}
\end{figure}

\textit{Example 3 (Two-state hidden Markov model)}
Let $(W_0, W_1, W_2, \ldots)$ be a Markov chain with state space $\sW = \{0,1\}$, probabilities $\P[W_t=1|W_{t-1}=0]=p$ and $\P[W_t=0|W_{t-1}=1]=q$, following the stationary distribution with $\pi = \begin{bmatrix} \tfrac{q}{p+q} & \tfrac{p}{p+q} \end{bmatrix}$, with $p,q \in (0,1)$ and $p+q>0$. This data is stationary $\beta$-mixing, and the mixing coefficients can be found explicitly \citep{mcdonald2015estimating}. When $p = q = 0.5$, $\beta(r) \equiv 0$ for all $r \in \N_{>0}$, so the Markov chain reduces to a sequence of iid Bernoulli trials. On the other hand, as $p$ and $q$ tend towards zero, $\beta(r)$ becomes large for every $r$ and dependence increases. We construct a hidden Markov model by adding a Gaussian noise with zero mean and variance of $10^{-6}$ to the Markov chain with $p=q$, and consider predicting a single data point by using the 11 preceding elements in the series as features. Figure~\ref{fig:cp_test_point-two_state_markov_chain} shows how marginal coverage (\ref{eq:marginal_coverage_guarantee}), calculated through $10\,000$ simulations, is affected by increasing levels of dependence $1-p$ for three different models (boosting, neural network and random forest) and $n_{\train}=1000, n_{\cal}=500$ and $n_{\test}=1$. We note in passing that the $\text{AR}(1)$ experiment (cf. Figure~\ref{fig:intro}) used the same amount of data for training, calibration and testing. Marginal coverage observed is close to nominal iid value of 90\% for the independent case ($p = 0.5$) and weak to medium dependence, measured by the probabilities $1-p$ of repeating the previous state. Coverage remains above 89\% even for large values of dependence, and falls below 88\% only after $1-p = 0.999$. Also, Figure~\ref{fig:eta} shows how the correction $\varepsilon_{\cal}+\delta_{\cal}+\varepsilon_{\train}$ in Theorem \ref{theorem:marginal_coverage_test_data} depends on the calibration set sizes, quickly converging to the iid limit even for moderately dependent data.

\begin{figure}[t]
        \centering
        \includegraphics[width=.7\textwidth]{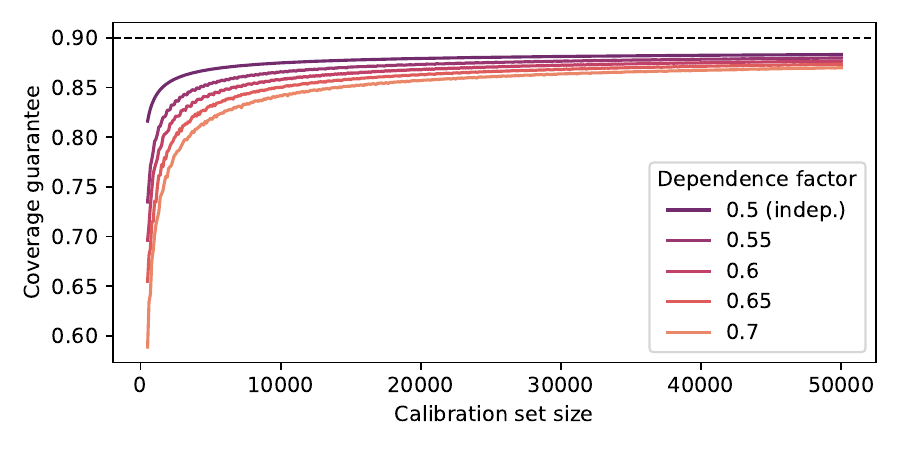}
        \caption{Marginal coverage for different calibration set sizes and dependence levels; the guarantees under dependence converge to the iid case with larger calibration sizes.}
        \label{fig:eta}
\end{figure}

\begin{figure}[hb]
        \centering
        \includegraphics[width=.7\textwidth]{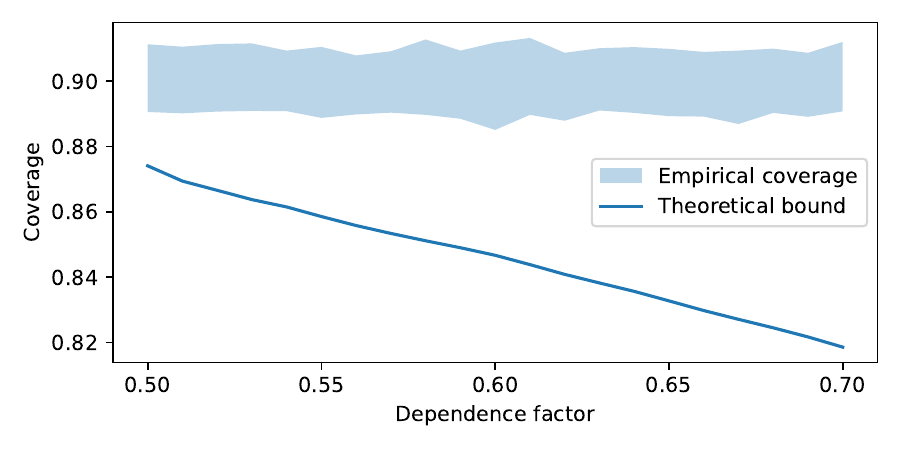}
        \caption{Empirical coverage bound; while the worst-case bound decreases with dependence, empirical coverage remains close to the iid level.}
        \label{fig:cp_test_set-alpha_0.1-n1_1000-n2_15000-n3_15000-lags_10}
\end{figure}

Further, Figure~\ref{fig:cp_test_set-alpha_0.1-n1_1000-n2_15000-n3_15000-lags_10} shows the empirical coverage for a gradient boosting model over a thousand simulations with $n_{\train}=1000$, $n_{\cal}=n_{\test}=15\,000$ and $\delta_{\cal}=\delta_{\test}=0.005$. Note the empirical coverage revolves around the prescribed iid level $0.9$, and it remains above the worst-case theoretical bound, which decreases with the dependence level $1-p$.

\textit{Example 4 (Financial time series)}
We study split CP's performance on three real-world time series: the euro spot exchange rate (\texttt{eurusd}), Brent crude oil future (\texttt{bcousd}) and S\&P 500 stock index future (\texttt{spxusd}). Minute-by-minute data is retrieved directly from \citet{histdata} via an open-source API \citep{histdata-api}. We compute linear returns by dividing a price at minute $t$ by the price at minute $t-1$ and subtracting $1$. Due to market closures, Fridays and Sundays were discarded. We use gradient boosting to predict the price at time $t$ using the prices at times $t-11,\ldots,t-1$. Then, we apply online conformal prediction over a sliding window of $1000$ training points, $500$ calibration points and $1$ single test point for the entire year of 2021. Figure~\ref{fig:marginal_coverage-financial_data} shows the daily coverage of split CP. The dashed black line represents the iid nominal coverage of 90\% and the dashed orange one the marginal coverage over the entire year. Marginal coverage is slightly below 90\%, but never drastically so.

\begin{figure}[t]
    \centering
    \includegraphics[width=\textwidth]{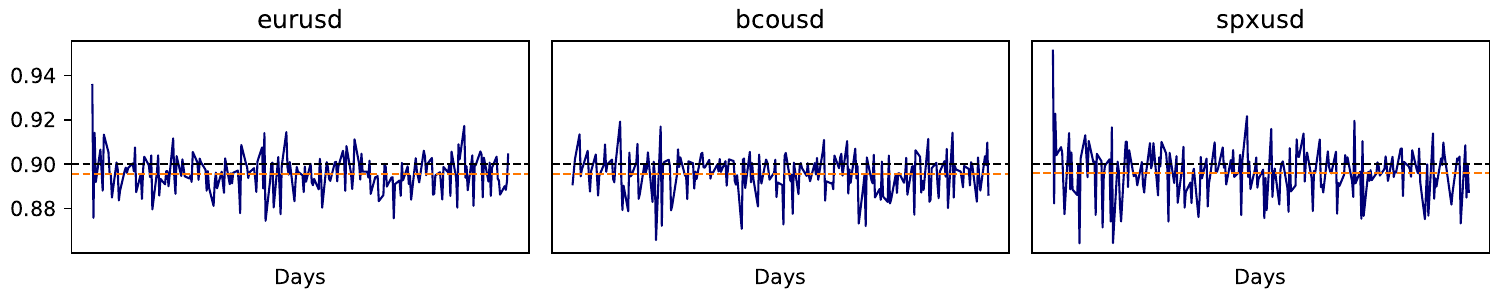}
    \caption{Daily marginal coverages of minute-by-minute online prediction for financial time series \texttt{eurusd}, \texttt{bcousd} and \texttt{spxusd} (solid blue), prescribed iid levels of $1 - \alpha = 0.9$ (dashed black) and observed marginal coverages over the entire year (dashed orange), above $0.895$ in all cases.}
    \label{fig:marginal_coverage-financial_data}
\end{figure}

\begin{table}[htb]
\footnotesize
\centering
\begin{tabular}{P{1.8cm}P{1.8cm}P{1.8cm}P{1.8cm}P{1.8cm}P{1.8cm}}
\toprule
\multirow{2}{*}{Data set}          & \multirow{2}{*}{Cal. set size} & \multicolumn{4}{c}{Conditional coverage} \\ \cmidrule(lr){3-6}
                                  &                                & Uptrend & Downtrend & High vol. & Low vol.         \\ \cmidrule(lr){1-6}
\multirow{3}{*}{\texttt{eurusd}}  & \phantom{0}500                 & 88.76\% &   88.82\% &   87.64\% &  90.07\%         \\
                                  & 1000                           & 89.19\% &   89.17\% &   88.38\% &  90.19\%         \\
                                  & 5000                           & 90.03\% &   89.98\% &   89.85\% &  90.08\%         \\ \cmidrule(lr){1-6}
\multirow{3}{*}{\texttt{bcousd}}  & \phantom{0}500                 & 88.94\% &   88.72\% &   87.10\% &  89.43\%         \\
                                  & 1000                           & 89.35\% &   89.04\% &   87.65\% &  89.95\%         \\
                                  & 5000                           & 89.78\% &   89.77\% &   89.33\% &  89.98\%         \\ \cmidrule(lr){1-6}
\multirow{3}{*}{\texttt{spxusd}}  & \phantom{0}500                 & 89.12\% &   89.01\% &   88.87\% &  89.68\%         \\
                                  & 1000                           & 89.53\% &   89.48\% &   88.84\% &  90.03\%         \\
                                  & 5000                           & 90.04\% &   89.73\% &   89.53\% &  90.30\%         \\
\bottomrule
\end{tabular}
\caption{Conditional coverage for distinct trend and volatility events and varying calibration set size (before conditioning). Note that conditional coverage is generally close to nominal iid level $1-\alpha=0.9$ and results improve given more calibration points.\label{tab:conditional}}
\end{table}

Table~\ref{tab:conditional} presents the conditional coverage (\ref{eq:conditional_coverage_guarantee}) on four events of interest for all three financial data sets. Uptrend (respectively, downtrend) stands for two consecutive observations of positive (negative) returns. High (low) volatility events are taken to be those in which the standard deviation of the previous $10$ returns observed is above (below) a given threshold. Note that conditioning on all such events still yields coverage close to the nominal iid level, on all three data sets. As previously noted, larger calibration sets have an important effect in improving coverage.

All experiments in this section were also conducted for nominally
prescribed iid levels of 85\% and 95\% and the conclusions remained the same, as expected.

\section{Conclusion}

This paper shows that many of the appealing properties of split conformal prediction hold well even when the data is not exchangeable. Theorems \ref{theorem:marginal_coverage_test_data} and \ref{theorem:empirical_coverage_test_data} give marginal and empirical coverage guarantees for broad classes of non-exchangeable data, and Theorem \ref{theorem:marginal_coverage_test_data-conditional} gives marginal coverage guarantees. We also consider the concrete case of stationary $\beta$-mixing sequences in Theorems \ref{theorem:general_framework_1_beta_mixing} and \ref{theorem:general_framework_2_beta_mixing}, and show that our error bounds for such non-exchangeable data may match the order of the iid bounds. To prove these results, we introduce a novel mathematical framework that frees split CP from its exchangeability hypotheses, replacing it with concentration inequalities and decoupling properties.

We further show that these guarantees translate to robust empirical performance by split CP beyond independent data, with examples included in Section \ref{sec:empirical_studies} ranging from synthetic to real data. In the same section, in spite of its generality, split CP is shown to fare well when compared to recent conformal algorithms developed for specific kinds of non-exchangeability. Except for extreme levels of dependence or very small calibration sets---small ``effective sample size'' regimes---, performance holds exceedingly well for standard split CP without any modifications.

Finally, we note that the general framework introduced in Section~\ref{sec:general_framework} can be extended in many directions. Section \ref{sec:extensions} looks at how results generalize for non-stationary data, as well as for other conformal prediction methods that are not based on split CP. One particular open direction is the application of concentration inequalities for weighted sums of exchangeable random variables (e.g., \citealt{barber2024hoeffding}) to analyze weighted conformal methods such as NexCP \citep{barber2023conformal}. More generally, we expect that several results and methods (e.g., \citealt{chernozhukov2021distributional,chernozhukov2018blocks,zaffran2022adaptive,feldman2022risk}) can be analyzed and extended via our unified framework, which remains an avenue for future work.

\acks{RIO is supported by CNPq grants 432310/2018-5 (Universal) and 304475/2019-0 (Produtividade em Pesquisa), and FAPERJ grants 202.668/2019 (Cientista do Nosso Estado)  and  290.024/2021 (Edital Intelig\^{e}ncia Artificial). PO is supported by FAPERJ grant SEI-260003/001545/2022.}

\appendix

\section{Proofs of Section \ref{sec:general_framework}}

For the proofs below, we need to introduce certain population quantiles for $\widehat{s}_{\train}(X_*,Y_*)$ conditionally on the training data.

\begin{definition}[Conditional $\phi$-quantile of the conformity score]\label{definition:marginal_quantile}
Given $\phi\in [0,1)$, let $q_{\phi,\train}$ denote the $\phi$-quantile of $\widehat{s}_{\train}(X_*,Y_*)$ conditioned on the training data; that is:
\[q_{\phi,\train} \coloneqq \inf\{t\in\R\,:\,\P[\widehat{s}_{\train}(X_*,Y_*)\leq t\mid \{(X_i,Y_i)\}_{i\in I_{\train}}]\geq \phi\}.\]
Alternatively, define, for a deterministic $(x_i,y_i)_{i=1}^{n_{\train}}\in(\sX\times \sY)^{n_{\train}}$, the $\phi$-quantile:
\[q_{\phi}((x_i,y_i)_{i=1}^{n_{\train}}) \coloneqq \inf\{t\in\R\,:\,\P[s((x_i,y_i)_{i=1}^{n_{\train}},(X_*,Y_*))\leq t]\geq \phi\},\]
and set $q_{\phi,\train} \coloneqq q_{\phi}((X_i,Y_i)_{i\in I_{\train}})$. We also define:
\[p_{\phi,\train} \coloneqq \P[\widehat{s}_{\train}(X_*,Y_*)\leq t\mid (X_i,Y_i)_{i\in I_{\train}}]\]
\end{definition}

\begin{remark}When the conditional law of $\widehat{s}_{\train}(X_*,Y_*)$ given the training data is continuous, we have $p_{\phi,\train}=\phi$. Otherwise, it only holds that $p_{\phi,\train}\geq \phi$.\end{remark}

\begin{proof}[Theorem \ref{theorem:marginal_coverage_test_data}]
First we show that the event $F=\{q_{1-\alpha-\varepsilon_{\cal},\train}\leq \widehat{q}_{1-\alpha,\cal}\}$ satisfies $\P[F]\geq 1-\delta_{\cal}.$
Indeed, by (\ref{eq:quantile}) and Definition~\ref{definition:marginal_quantile}, if condition (\ref{eq:hypothesis-concentration_over_calibration}) holds, for any $\ell\in \N_{>0}$, with probability at least $1-\delta_{\cal}$,
  \begin{align*}
      \frac{1}{n_{\cal}}\sum_{i\in I_{\cal}} \Ind{\widehat{s}_{\train}(X_i,Y_i) \leq q_{1-\alpha-\varepsilon_{\cal},\train}-1/\ell}&\leq \P[\widehat{s}_{\train}(X_*,Y_*)\leq q_{1-\alpha-\varepsilon_{\cal},\train}-1/\ell]+\varepsilon_{\cal}\\
      &<  1-\alpha-\varepsilon_{\cal}+\varepsilon_{\cal}=1-\alpha\\
      &\leq \frac{1}{n_{\cal}}\sum_{i\in I_{\cal}} \Ind{\widehat{s}_{\train}(X_i,Y_i) \leq \widehat{q}_{1-\alpha,\cal}}.
  \end{align*}
 This implies that  the event
\[E_{\ell}=\left\{       \frac{1}{n_{\cal}}\sum_{i\in I_{\cal}} \Ind{\widehat{s}_{\train}(X_i,Y_i) \leq q_{1-\alpha-\varepsilon_{\cal},\train}-1/\ell}\\
< \frac{1}{n_{\cal}}\sum_{i\in I_{\cal}} \Ind{\widehat{s}_{\train}(X_i,Y_i) \leq \widehat{q}_{1-\alpha,\cal}}\right\}\]
satisfies $\P[E_\ell]\geq 1-\delta_{\cal}$ for all $\ell\in\N_{>0}$, and since $E_{\ell+1}\subset E_\ell$, we have
$1-\delta_{\cal}\leq \lim_{\ell\to\infty} \P[E_\ell] = \P[E_{\infty}]$,
where,
\[E_\infty=\left\{       \frac{1}{n_{\cal}}\sum_{i\in I_{\cal}} \Ind{\widehat{s}_{\train}(X_i,Y_i) \leq q_{1-\alpha-\varepsilon_{\cal},\train}}\\
\leq \frac{1}{n_{\cal}}\sum_{i\in I_{\cal}} \Ind{\widehat{s}_{\train}(X_i,Y_i) \leq \widehat{q}_{1-\alpha,\cal}}\right\},\]
proving that  $\P[F]\geq 1-\delta_{\cal}$.
Therefore, given  $i \in I_{\test}$, using the fact that the function $t\mapsto\P[\widehat{s}_{\train}(X_i, Y_i)\leq t]$ is increasing,
\begin{align*}
    \P[\widehat{s}_{\train}(X_i,Y_i)\leq \widehat{q}_{1-\alpha,\cal}] &\geq  \P[\{\widehat{s}_{\train}(X_i,Y_i)\leq \widehat{q}_{1-\alpha,\cal}\}\cap F]\\
    &\geq \P[\widehat{s}_{\train}(X_i,Y_i)\leq q_{1-\alpha-\varepsilon_{\cal},\train}] - \delta_{\cal}.
\end{align*}
Hence, by condition (\ref{eq:hypothesis-decoupling_over_test_data}) and a conditioning argument,
\begin{align}\label{eq:conclusion_first_part_marginal_coverage}
    \P[\widehat{s}_{\train}(X_i,Y_i)\leq \widehat{q}_{1-\alpha,\cal}] &\geq \P[\widehat{s}_{\train}(X_i,Y_i)\leq q_{1-\alpha-\varepsilon_{\cal},\train}] - \delta_{\cal}\\
    &\geq \P[\widehat{s}_{\train}(X_*,Y_*)\leq q_{1-\alpha-\varepsilon_{\cal},\train}] - \varepsilon_{\test} - \delta_{\cal}\nonumber\\
    & \geq 1-\alpha-\varepsilon_{\cal}- \varepsilon_{\test} - \delta_{\cal}\nonumber,
\end{align}
proving the  first part of the theorem.

For the second part, note that  by  Definition  \ref{definition:marginal_quantile} and condition (\ref{eq:hypothesis-concentration_over_calibration}) we have with probability at least $1-\delta_{\cal}$,
  \begin{equation*}
      \frac{1}{n_{\cal}}\sum_{i\in I_{\cal}}\Ind{\widehat{s}_{\train}(X_i,Y_i)\leq q_{1-\alpha +\varepsilon_{\cal},\train }} \geq  \P[\widehat{s}_{\train}(X_*,Y_*)\leq q_{1-\alpha+\varepsilon_{\cal},\train}]-\varepsilon_{\cal}
      \geq  1-\alpha,
  \end{equation*}
  and since $\widehat{q}_{1-\alpha-\varepsilon_{\cal},\cal}$ is the smallest possible value satisfying the expression above,   the event  $G=\{\widehat{q}_{1-\alpha,\cal}\leq q_{1-\alpha+\varepsilon_{\cal},\train}\}$ satisfies $\P[G]\geq 1-\delta_{\cal}.$ Then,
  \begin{align*}
      \P[\widehat{s}_{\train}(X_i,Y_i)\leq \widehat{q}_{1-\alpha,\cal}] &\leq        \P[\{\widehat{s}_{\train}(X_i,Y_i)\leq \widehat{q}_{1-\alpha,\cal}\}\cap G] +\delta_{\cal}\\
      &\leq \P[\widehat{s}_{\train}(X_i,Y_i)\leq q_{1-\alpha+\varepsilon_{\cal},\train}] +\delta_{\cal}.
  \end{align*}
Hence, if $\widehat{s}_{\train}(X_*,Y_*)$ almost surely has a continuous distribution conditionally on the training data, by condition (\ref{eq:hypothesis-decoupling_over_test_data})
  \begin{align*}
      \P[\widehat{s}_{\train}(X_i,Y_i)\leq \widehat{q}_{1-\alpha,\cal}]  &\leq \P[\widehat{s}_{\train}(X_*,Y_*)\leq q_{1-\alpha+\varepsilon_{\cal},\train}] +\varepsilon_{\test}+\delta_{\cal}\\
      & = 1-\alpha + \varepsilon_{\cal}+\varepsilon_{\test}+\delta_{\cal}.
  \end{align*}
Putting this together with (\ref{eq:conclusion_first_part_marginal_coverage}) concludes the second part.
\end{proof}

\begin{proof}[Theorem \ref{theorem:empirical_coverage_test_data}]
Assuming that condition (\ref{eq:hypothesis-concentration_over_calibration}) and condition (\ref{eq:hypothesis-concentration_of_empirical_cdf}) hold and using a similar argument  as we did in the proof of Theorem \ref{theorem:marginal_coverage_test_data}, it is straightforward to show that the event $F=\{\widehat{q}_{1-\alpha-\varepsilon_{\cal}-\varepsilon_{\test},\test}\leq \widehat{q}_{1-\alpha,\cal}\}$
satisfies $\P[F]\geq 1-\delta_{\cal}-\delta_{\test}.$ But then,
\begin{align*}
    &\P\left[\frac{1}{n_{\test}} \sum_{i \in I_{\test}} \Ind{\widehat{s}_{\train}(X_i, Y_i) \leq \widehat{q}_{1-\alpha, \cal}} \geq 1 - \alpha - \varepsilon_{\cal}-\varepsilon_{\test}\right]\\
    &\geq \P\left[\frac{1}{n_{\test}} \sum_{i \in I_{\test}} \Ind{\widehat{s}_{\train}(X_i, Y_i) \leq \widehat{q}_{1-\alpha-\varepsilon_{\cal}-\varepsilon_{\test},\test}} \geq 1 - \alpha - \varepsilon_{\cal}-\varepsilon_{\test}\right] - \P[F^c]\\
    & \geq 1 - \delta_{\cal}-\delta_{\test},
\end{align*}
proving the first part.
For the second part, note that
$G=\{\widehat{q}_{1-\alpha+\varepsilon_{\cal}+\varepsilon_{\test},\test}\geq \widehat{q}_{1-\alpha,\cal}\}$
also has probability at least $1-\delta_{\cal}-\delta_{\test}$, therefore the event
$F\cap G = \{\widehat{q}_{1-\alpha+\varepsilon_{\cal}+\varepsilon_{\test},\test}\geq \widehat{q}_{1-\alpha-\varepsilon_{\cal}-\varepsilon_{\test},\test}\}$
has probability at least $1-2\delta_{\cal}-2\delta_{\test}$
Hence, if $\widehat{s}_{\train}(X_*,Y_*)$ almost surely has a continuous distribution conditionally on the training data, using the same argument  as we did in the proof of Theorem \ref{theorem:marginal_coverage_test_data} concludes the theorem.
\end{proof}

\par\noindent{\bf Proof\ }[Application to the iid case]
First, note that, in the iid case, when $i\in I_{\test}$,
\begin{equation*}
    \P[\widehat{s}_{\train}(X_i,Y_i)\leq q_{\train}] = \P[\widehat{s}_{\train}(X_*,Y_*)\leq q_{\train}],
\end{equation*}
showing that condition (\ref{eq:hypothesis-decoupling_over_test_data}) holds with $\varepsilon_{\test}=0$.

Moreover, using the fact that $(\Ind{\widehat{s}_{\train}(X_i,Y_i)\leq q_{\train}})_{i=1}^n$ is an iid sample of bounded random variables, by Hoeffding's inequality, with probability at least $1-\delta,$
\[\left|\frac{1}{n} \sum_{i=1}^n \Ind{\widehat{s}_{\train}(X_i,Y_i)\leq q_{\train}}- P_{q,\train} \right|\leq \sqrt{\frac{1}{2n}\log\left(\frac{2}{\delta}\right)}.\]
Therefore, conditions (\ref{eq:hypothesis-concentration_over_calibration}) and (\ref{eq:hypothesis-concentration_of_empirical_cdf}) are proved by taking
\begin{equation*}
    \varepsilon_{\cal}=\sqrt{\frac{1}{2n_{\cal}}\log\left(\frac{2}{\delta_{\cal}}\right)}\quad \textrm{and}\quad
    \varepsilon_{\test} =\sqrt{\frac{1}{2n_{\test}}\log\left(\frac{2}{\delta_{\test}}\right)}.\tag*{\BlackBox}
\end{equation*}

\section{Proofs of Section \ref{sec:conditional_guarantees} and Further Results}
\label{appendix:conditional_guarantees}

Let $\sA$ be a family of measurable subsets of $\sX$. Given $\phi\in [0,1)$ and $A\in\sA$, let $I_{\cal}(A)\coloneqq\{i \in I_{\cal} : X_i \in A\}$ and $n_{\cal}(A)\coloneqq \#I_{\cal}(A)$. Denote the empirical $\phi$-quantile of $\widehat{s}_{\train}(X_i,Y_i)$ over $i\in I_{\cal}$ as:
\begin{equation*}
  \widehat{q}_{\phi,\cal}(A) \coloneqq \inf\left\{t\in\R\,:\, \frac{1}{n_{\cal}(A)}\sum_{i\in I_{\cal}(A)}\Ind{\widehat{s}_{\train}(X_i,Y_i)\leq t}\geq \phi\right\},
\end{equation*}
and, for $x\in A$, define the prediction set:
\begin{equation*}
   C_{\phi}(x;A) \coloneqq \{y\in\sY\,:\, \widehat{s}_{\train}(x,y)\leq \widehat{q}_{\phi, \cal}(A)\}.
\end{equation*}
For $A \in \sA $ with $\P[X\in A]>0$, let
\begin{equation} \label{eq:p_q_train-conditional}
    {P}_{q,\train}(A) \coloneqq \P[\widehat{s}_{\train}(X_*,Y_*)\leq q_{\train}\mid (X_i,Y_i)_{i\in I_{\train}},X_*\in A].
\end{equation}

\par\noindent{\bf Proof\ }[Theorem \ref{theorem:marginal_coverage_test_data-conditional}]
Following the same strategy as in Theorem \ref{theorem:marginal_coverage_test_data}, we have that, with probability at least $1-\delta_{\cal}$, the event $F_{\cal} = \{q_{1-\alpha-\varepsilon_{\cal}}(A)\leq \widehat{q}_{1-\alpha,\cal}(A),\ \forall A\in \sA\}$ satisfies $\P[F_{\cal}]\geq 1-\delta_{\cal}.$
Now, using the fact that the function
$t \mapsto \P[\widehat{s}_{\train}(X_k, Y_k) \leq t\mid X_k\in A]$ is increasing, for any $k \in I_{\test}$ and  all $A\in \sA$,
\begin{align*}
    \P[\widehat{s}_{\train}(X_k,Y_k)\leq \widehat{q}_{1-\alpha,\cal}(A)\mid X_k\in A] &\geq  \P[\{\widehat{s}_{\train}(X_k,Y_k)\leq \widehat{q}_{1-\alpha,\cal}(A)\}\cap F_{\cal}\mid X_k\in A]\\
    & \geq \P[\{\widehat{s}_{\train}(X_k,Y_k)\leq q_{1-\alpha-\varepsilon_{\cal}}(A)\}\cap F_{\cal}\mid X_k\in A]\\
    &\geq \P[\widehat{s}_{\train}(X_k,Y_k)\leq q_{1-\alpha-\varepsilon_{\cal}}(A)\mid X_k\in A] - \delta_{\cal}.
\end{align*}
Then, to conclude,
\begin{align*}
    &\P[\widehat{s}_{\train}(X_k,Y_k)\leq \widehat{q}_{1-\alpha,\cal}(A)\mid X_k\in A]\\
    &\geq \P[\widehat{s}_{\train}(X_k,Y_k)\leq q_{1-\alpha-\varepsilon_{\cal}}(A)\mid X_k\in A] - \delta_{\cal}\\
    &\geq \P[\widehat{s}_{\train}(X_*,Y_*)\leq q_{1-\alpha-\varepsilon_{\cal}}(A)\mid X_*\in A] - \varepsilon_{\train} - \delta_{\cal}\\
    &= \E[\P[\widehat{s}_{\train}(X_*,Y_*)\leq q_{1-\alpha-\varepsilon_{\cal}}(A)\mid (X_i,Y_i)_{i\in I_{\train}}, X_*\in A]] - \varepsilon_{\train} - \delta_{\cal}\\
    & \geq 1-\alpha-\varepsilon_{\cal}- \varepsilon_{\train} - \delta_{\cal}.\tag*{\BlackBox}
\end{align*}

We now proceed to give an empirical conditional coverage guarantee. For a conditional version of (\ref{eq:hypothesis-concentration_of_empirical_cdf}), suppose there exist $\delta_{\test}$ and $\varepsilon_{\test} \in(0,1)$ such that
    \begin{equation} \label{eq:hypothesis-concentration_of_empirical_cdf-conditional}
        \P\left[\sup_{A\in\sA}\left|P_{q,\train}(A)- \frac{1}{n_{\test}(A)} \sum_{i\in I_{\test}(A)}\Ind{\widehat{s}_{\train}(X_i,Y_i)\leq q_{\train}}\right|\leq \varepsilon_{\test}\right] \geq 1-\delta_{\test},
    \end{equation}
where $P_{q,\train}(A)$ is defined as in (\ref{eq:p_q_train-conditional}). This suffices for empirical conditional coverage.

\begin{theorem}[Empirical conditional coverage over test data]\label{theorem:empirical_conditional_coverage_test_data}
Given $\alpha\in(0,1)$, $\delta_{\cal}>0$ and $\delta_{\test}>0$, if (\ref{eq:hypothesis-concentration_over_calibration-conditional}) and (\ref{eq:hypothesis-concentration_of_empirical_cdf-conditional}) hold, then for each $A\in \sA$:
    \begin{align*}
        &\P\left[\inf_{A\in\sA}\frac{1}{n_{\test}(A)} \sum_{i \in I_{\test}(A)} \Ind{Y_i\in C_{1-\alpha}(X_i;A)} \geq 1 - \alpha - \eta\right] \geq 1 - \delta_{\cal}-\delta_{\test},
    \end{align*}
where $\eta = \varepsilon_{\cal}+\varepsilon_{\test}.$
Additionally, if $\widehat{s}_{\train}(X_*,Y_*)$ almost surely has a continuous distribution conditionally on the training data, then:
    \begin{align*}
        &\P\left[\sup_{A\in\sA}\left|\frac{1}{n_{\test}(A)} \sum_{i \in I_{\test}(A)} \Ind{Y_i\in C_{1-\alpha}(X_i;A)} - (1 - \alpha)\right| \leq  \eta\right] \geq 1 - 2\delta_{\cal}-2\delta_{\test}.
    \end{align*}
\end{theorem}
\begin{proof}
As in the proof of empirical coverage over test data (Theorem \ref{theorem:empirical_coverage_test_data}), the event $F=\{\widehat{q}_{1-\alpha-\varepsilon_{\cal}-\varepsilon_{\test},\test}(A)\leq \widehat{q}_{1-\alpha,\cal}(A) \forall A\in \sA\}$
satisfies $\P[F]\geq 1-\delta_{\cal}-\delta_{\test}$ and the remaining of the proof is just a direct application of the definition of conditional empirical quantile calibrated over the test data.
\end{proof}

The results above are directly applicable in the iid case. It is straightforward to show that (\ref{eq:hypothesis-decoupling_over_test_data-conditional}) holds with $\varepsilon_{\test}=0$ and, if the family $\sA$ has finite VC dimension $\text{VC}(\sA)=d$ and $\P[A]>\gamma$ for some $\gamma>0$ and all $A\in\sA$, it suffices to take $\varepsilon_{\cal}=\gamma^{-1}(4 \sqrt{\log(2(n+1)^{d})/n}+2 \sqrt{\log(4/\delta)/(2n)})$.

\section{Proofs of Section \ref{sec:stationary_beta_mixing_data} and Further Results}
\label{appendix:stationary_beta_mixing_data}

\subsection{Standard Coverage Guarantees}\label{appendix:beta_mixing-standard_coverage_guarantees}

Our goal is to check conditions (\ref{eq:hypothesis-concentration_over_calibration}), (\ref{eq:hypothesis-decoupling_over_test_data}) and (\ref{eq:hypothesis-concentration_of_empirical_cdf}) when $(X_i,Y_i)_{i=1}^n$ is a stationary $\beta$-mixing process. As stated in the main text, the main tool we will use is the so-called Blocking Technique \citep{yu1994rates, mohri2010stability, kuznetsov2017generalization}. It allows one to measure the difference in expectation between a function of a $\beta$-mixing process and the same function over an independent process, thereby transforming the original dependent problem into an independent one with the addition of a penalty factor.

\begin{proposition}[Blocking Technique] \label{prop:blocking_technique}
    Let $\{Z_t\}_{t=1}^{T}$ be a sample of a stationary $\beta$-mixing process. Split the sample into $2m$ interleaved blocks, with even blocks of size $a$ and odd blocks of size $b$, such that $T=m(a+b)$. Denote each block by $B_j = \{Z_i\}_{i=l(j)}^{u(j)}$, where $l(j)=1+ \lceil (j-2)/2 \rceil a+\lfloor j/2 \rfloor b$ and $u(j)=\lfloor j/2 \rfloor a + \lceil j/2 \rceil b $, so the set of odd blocks, each of size $b$, is given by $B_{\mathrm{odd}}=(B_1, B_3, \ldots, B_{2m-1})$. Consider also the set $B^{*}_{\mathrm{odd}}=(B_1^{*}, B_3^{*}, \ldots, B_{2m-1}^{*})$ where $B^{*}_{j}$ are independent for $j=1, 3, \ldots, 2m-1$, and $B^{*}_j \stackrel{d}{=} B_j$. If $h:\R^{mb}\to\R$ is a Borel-measurable function with $|h|\leq M$ for some $M>0$, then
    \begin{equation*}
        |\E[h(B_{\mathrm{odd}})] - \E[h(B^{*}_{\mathrm{odd}})]| \leq 2M(m-1)\beta(a),
    \end{equation*}
    where $\beta(a)$ is the $\beta$-mixing coefficient of $\{Z_t\}_{t=1}^{T}$.
\end{proposition}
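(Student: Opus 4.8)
The plan is to reduce the statement about expectations of the bounded function $h$ to a bound on a total variation distance, and then control that distance by a hybrid (telescoping) argument that replaces the odd blocks by independent copies one at a time, paying one factor of $\beta(a)$ per replacement.

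First I would record the elementary inequality that for probability measures $P,Q$ on a common measurable space and any measurable $h$ with $|h|\le M$ one has $|\E_P[h]-\E_Q[h]|\le 2M\,\|P-Q\|_{\TV}$. Taking $P=\mathrm{Law}(B_{\mathrm{odd}})$ and $Q=\mathrm{Law}(B^{*}_{\mathrm{odd}})$ as measures on $\R^{mb}$, and noting that by construction $B^{*}_{\mathrm{odd}}$ has the product law $\bigotimes_{j=1}^{m}\mathrm{Law}(B_{2j-1})$, it then suffices to prove
\[\left\| \mathrm{Law}(B_1,B_3,\dots,B_{2m-1}) - \bigotimes_{j=1}^{m}\mathrm{Law}(B_{2j-1}) \right\|_{\TV} \le (m-1)\,\beta(a).\]

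Next I would introduce the interpolating measures: for $k=0,1,\dots,m$ set
\[\pi_k \coloneqq \left( \bigotimes_{j=1}^{k}\mathrm{Law}(B_{2j-1}) \right) \otimes \mathrm{Law}(B_{2k+1},B_{2k+3},\dots,B_{2m-1}),\]
so that $\pi_0$ is the true joint law of the odd blocks while $\pi_m$ is the product of the marginals. The key observation is that $\pi_{m-1}=\pi_m$: when only the single block $B_{2m-1}$ remains, its ``joint law'' is already its marginal — this is precisely why the bound carries the factor $m-1$ rather than $m$. For the step from $\pi_k$ to $\pi_{k+1}$ (with $0\le k\le m-2$) the leading factor $\bigotimes_{j=1}^{k}\mathrm{Law}(B_{2j-1})$ is common to both measures and independent of the rest, so, using that tensoring with a common factor leaves the total variation unchanged,
\[\|\pi_k-\pi_{k+1}\|_{\TV} = \bigl\| \mathrm{Law}(B_{2k+1},B_{2k+3},\dots,B_{2m-1}) - \mathrm{Law}(B_{2k+1}) \otimes \mathrm{Law}(B_{2k+3},\dots,B_{2m-1}) \bigr\|_{\TV}.\]
Here I would use the index formulas $l(j),u(j)$ to check that $B_{2k+1}$ and the union of blocks $B_{2k+3},\dots,B_{2m-1}$ are separated by the even block $B_{2k+2}$ of length $a$; hence, after a time shift (licensed by stationarity), the indices of $B_{2k+1}$ lie in $(-\infty,0]$ and those of $(B_{2k+3},\dots,B_{2m-1})$ lie in $[a,\infty)$. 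Since the joint law of any sub-collection of coordinates is a marginal (pushforward under a coordinate projection) of $\P_{-\infty:0,\,a:\infty}$, and marginalization does not increase total variation, the right-hand side above is at most $\|\P_{-\infty:0,\,a:\infty}-\P_{-\infty:0}\otimes\P_{a:\infty}\|_{\TV}=\beta(a)$. Summing over $k=0,\dots,m-2$ with the triangle inequality gives $\|\pi_0-\pi_{m-1}\|_{\TV}\le (m-1)\beta(a)$, which is the required estimate.

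The genuinely routine parts are the elementary total-variation identities. The one place that requires care is the bookkeeping with $l(j)$ and $u(j)$, to confirm that consecutive odd blocks are always separated by exactly an even block of size $a$ (so that the coefficient that appears is $\beta(a)$ and not one with a different index), together with fixing the normalization of $\|\cdot\|_{\TV}$ consistently so the constant $2$ in the conclusion comes out right. An alternative to the telescoping is to couple the blocks one at a time via Berbee's lemma, but the total-variation argument above is self-contained and suffices since $h$ is bounded.
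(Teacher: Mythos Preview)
Your argument is correct and is essentially the standard proof of the Blocking Technique going back to Yu (1994): reduce to a total-variation estimate via the elementary bound $|\E_P h-\E_Q h|\le 2M\|P-Q\|_{\TV}$, then telescope by decoupling one odd block at a time, using stationarity and the fact that pushforwards (marginalizations) contract $\|\cdot\|_{\TV}$ to pay $\beta(a)$ at each of the $m-1$ steps. The paper itself does not give a proof of this proposition---it is stated as a known tool with references to \cite{yu1994rates, mohri2010stability, Kuznetsov2017}---so there is nothing to compare against beyond noting that your write-up matches the classical argument in those references; the only bookkeeping to be careful with (which you flagged) is the off-by-one in the gap length, but since $\beta$ is nonincreasing this only makes the bound $\beta(a)$ conservative and does not affect the conclusion.
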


Using the Blocking Technique, we can prove that up to a error correction factor, we can transform our stationary $\beta$-mixing problem into an iid one:

\begin{lemma}[\citealt{mohri2009rademacher}]\label{lemma:uniform_bound_beta_mixing}
    Let $Z_1, \ldots, Z_n$ be a sample drawn from a stationary $\beta$-mixing distribution and $\sF$ be a class of functions from $\sX$ to $[0,1]$. Split the sample into $2m$ blocks, with blocks of size $a$ with $n=2ma$. Denote the blocks by $B_j = \{Z_i\}_{i=l(j)}^{u(j)}$ where $l(j)=1+(j-1)a$ and $u(j)=ja$, with $B_{\mathrm{odd}}=(B_1, B_3, \ldots, B_{2m-1})$. Call the independent version of $B_{\mathrm{odd}}$ by $B_{\mathrm{odd}}^{*}=(B_1^{*}, B_3^{*}, \ldots, B_{2m-1}^{*})$, where $B_j^{*}$ are independent with $B_j^{*}\stackrel{d}{=} B_j$, and let $\P_*$ be their law. Then,
    \begin{align*}
        \P\left[\sup_{f\in\sF}\left|\E[f(Z_1)] - \frac{1}{n}\sum_{i=1}^{n} f(Z_i)\right|>\varepsilon\right]& \leq 2 \P_*\left[\sup_{f\in\sF}\left|\E\left[f(Z_1)\right] - \frac{1}{ma}\sum_{Z_j \in B^{*}_{\mathrm{odd}}} f(Z_j) \right|>\varepsilon\right]\\
        &+ 4(m-1)\beta(a).
    \end{align*}
\end{lemma}

Finally, using Lemma \ref{lemma:uniform_bound_beta_mixing} and Bernstein's Inequality, we are ready to prove a concentration inequality for stationary $\beta$-mixing processes.

\begin{lemma}\label{lemma:bound_beta_mixing-bernstein}
    Let $Z_1, \ldots, Z_n$ be a sample drawn from a stationary $\beta$-mixing distribution with $Z_1 \in [0, 1]$ and $\V[Z_1]=v< \infty$. Then, for any $m, a,s\in \N_{+}$ with $m>1$, $n=2ma+s$ and $\delta>4(m-1)\beta(a))$, with probability at least $1-\delta$ it holds that
    \begin{equation*}
        \left|\E\left[Z_1\right] - \frac{1}{n}\sum_{i=1}^{n} Z_i\right| \leq \varepsilon ,
    \end{equation*}
    where
    \[\varepsilon= \widetilde{\sigma}(a)\sqrt{\frac{4}{n} \log\left(\frac{4}{\delta-4(m-1)\beta(a)}\right)} + \frac{1}{3m}\log\left(\frac{4}{\delta-4(m-1)\beta(a)}\right)+ \frac{s}{n},\]
    and
    $\widetilde{\sigma}(a)=\sqrt{v +\frac{2}{a}\sum_{k=1}^{a-1}(a-k)\beta(k)}.$
\end{lemma}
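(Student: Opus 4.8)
The plan is to peel off the three sources of error one at a time, in the order: the length-$s$ remainder, the dependence (via the Blocking Technique of Lemma~\ref{lemma:bound_beta_mixing}), and the fluctuation of a sum of independent bounded blocks (via Bernstein's inequality). First I would dispose of the remainder: since every $Z_i\in[0,1]$ and $n=2ma+s$, a one-line estimate gives
\[
\Big|\E[Z_1]-\frac1n\sum_{i=1}^{n}Z_i\Big|\le\Big|\E[Z_1]-\frac1{2ma}\sum_{i=1}^{2ma}Z_i\Big|+\frac sn,
\]
so it suffices to control the average over the first $2ma$ points and add $s/n$ at the very end. This reduces matters to a sample of length exactly $2ma$.

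Next I would invoke Lemma~\ref{lemma:bound_beta_mixing} with sample size $2ma$, block length $a$, and $m$ even/odd blocks: it bounds the deviation probability for the dependent average by twice the corresponding probability for the independent-block process $B^{*}_{\mathrm{odd}}$, at the cost of the additive penalty $4(m-1)\beta(a)$. On the independent side the relevant average equals $\frac1m\sum_{j=1}^m W_j$, where $W_j:=\frac1a\sum_{i\in B^{*}_{2j-1}}Z_i\in[0,1]$ are i.i.d.\ block averages with $\E[W_j]=\E[Z_1]$ by stationarity. The quantitative heart of the argument is the variance bound for a block average: expanding and using stationarity,
\[
\V[W_1]=\frac1{a^2}\Big(a\,v+2\sum_{k=1}^{a-1}(a-k)\,\mathrm{Cov}(Z_1,Z_{1+k})\Big),
\]
and, since the $Z_i$ lie in $[0,1]$, a layer-cake reduction to indicators together with the total-variation definition of $\beta(\cdot)$ gives $|\mathrm{Cov}(Z_1,Z_{1+k})|\le\beta(k)$; hence $a\,\V[W_1]\le\widetilde\sigma(a)^2$.

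Finally I would apply Bernstein's inequality to the $m$ independent $[0,1]$-valued summands $W_j$ and invert it, requiring the resulting two-sided tail to be at most $\tfrac14\big(\delta-4(m-1)\beta(a)\big)$; then, after the factor $2$ and the additive $4(m-1)\beta(a)$ supplied by Lemma~\ref{lemma:bound_beta_mixing}, the total failure probability is at most $\delta$. Solving the quadratic inequality in the deviation and substituting $a\,\V[W_1]\le\widetilde\sigma(a)^2$ and $ma=n/2$ produces exactly the first two summands of $\varepsilon$, and adding back the $s/n$ from the first step yields the claim.

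The main obstacle, and the only genuinely non-routine point, is the middle step. One must (i) keep the interleaved even/odd block decomposition of Lemma~\ref{lemma:bound_beta_mixing} consistent with the constraint $n=2ma+s$ when re-assembling the final bound, and (ii) establish the sharp covariance estimate $|\mathrm{Cov}(Z_1,Z_{1+k})|\le\beta(k)$. It is (ii) --- as opposed to the crude Cauchy--Schwarz bound $\mathrm{Cov}(Z_1,Z_{1+k})\le v$, which would only give $\V[W_1]=O(v)$ instead of $O(v/a)$ --- that produces the $\widetilde\sigma(a)\,n^{-1/2}$ leading term, and this is precisely what later lets the $\beta$-mixing bound recover the \iid{} rate when $\beta(k)$ decays fast enough (cf.\ Remark~\ref{rem:independence_rates}).
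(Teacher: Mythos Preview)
Your proposal is correct and follows essentially the same approach as the paper: peel off the length-$s$ remainder, apply the Blocking Technique (Lemma~\ref{lemma:bound_beta_mixing}) to reduce to i.i.d.\ block averages $W_j$, bound the block-average variance via the $\beta$-mixing covariance estimate $|\mathrm{Cov}(Z_1,Z_{1+k})|\le\beta(k)$, and then invert Bernstein's inequality. The paper's proof is terser on the covariance step (it simply asserts $\E[Z_iZ_j]\le\E[Z_1]^2+\beta(k)$ without your layer-cake remark) and makes the same substitution $2ma\approx n$, but the structure is identical.
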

\begin{proof}
 By an application of Lemma \ref{lemma:uniform_bound_beta_mixing} taking $\sF = \{x\mapsto x, x\in [0,1]\}$
        and Bernstein's inequality over the $m$ independent blocks,  with probability at least $1-\delta$
    \begin{align}\label{appendix:bound_bernstein}
        \left|\E\left[Z_1\right] - \frac{1}{n}\sum_{i=1}^{n} Z_i\right| & \leq \frac{2ma}{n}\left|\E\left[Z_1\right] - \frac{1}{2ma}\sum_{i=1}^{2ma} Z_i\right| + \frac{s}{n}\\
        &\leq \sigma\sqrt{\frac{2}{m} \log\left(\frac{4}{\delta-4(m-1)\beta(a)}\right)}+\frac{1}{3m}\log\left(\frac{4}{\delta-4(m-1)\beta(a)}\right) + \frac{s}{n},\nonumber
    \end{align}
    where $\sigma^2 = \V\left[\frac{1}{a}\sum_{i: Z_i \in B_j} Z_i\right].$
To estimate $\sigma^2$, note that by stationarity,
\begin{align*}
    \V\left[\frac{1}{a}\sum_{i=1}^a Z_i\right] & = \frac{1}{a}\E\left[Z_1^2 \right] - \E[Z_1]^2+\frac{1}{a^2}\sum_{k=1}^{a-1}\sum_{|i- j|=k}^a \E\left[Z_i Z_j\right].
\end{align*}
Now, using the fact that $\{Z_i\}_{i\in B_j}$ is $\beta$-mixing, we have
\begin{align*}
    \V\left[\frac{1}{a}\sum_{i=1}^a Z_i\right]
    &\leq \frac{1}{a}\E\left[Z_1^2 \right] - \E[Z_1]^2+\frac{1}{a^2}\sum_{k=1}^{a-1}\sum_{|i- j|=k}^a \left(\E\left[Z\right]^2 +\beta(k)\right)\\
    &= \frac{1}{a}\E\left[Z_1^2 \right] - \E[Z_1]^2+\frac{1}{a^2}\sum_{k=1}^{a-1}\sum_{|i- j|=k}^a \E\left[Z\right]^2 +\frac{1}{a^2}\sum_{k=1}^{a-1}\sum_{|i- j|=k}^a \beta(k)\\
  &=\frac{1}{a}\V[Z_1] +\frac{1}{a^2}\sum_{k=1}^{a-1}\sum_{|i- j|=k} \beta(k) = \frac{1}{a}\left(\V[Z_1] +\frac{2}{a}\sum_{k=1}^{a-1}(a-k)\beta(k)\right).
\end{align*}
That is,
\[\sigma \leq \sqrt{\frac{1}{a}\left(v +\frac{2}{a}\sum_{k=1}^{a-1}(a-k)\beta(k)\right)}.\]
Plugging the above expression in (\ref{appendix:bound_bernstein}) and using the fact that $2ma=n,$ yields the result.
\end{proof}

Now we are ready to prove conditions (\ref{eq:hypothesis-concentration_over_calibration}), (\ref{eq:hypothesis-decoupling_over_test_data}) and (\ref{eq:hypothesis-concentration_of_empirical_cdf}) for the stationary $\beta$-mixing case.

\begin{proposition}\label{proposition:assumption_concentration_cal_holds}
If $(X_i,Y_i)_{i=1}^n$ is stationary $\beta$-mixing, then  condition (\ref{eq:hypothesis-concentration_over_calibration}) holds  with
\begin{align*}
        \varepsilon_{\cal}&= \inf_{(a,m,r)\in F_{\cal}}\left\{ \widetilde{\sigma}(a)\sqrt{\frac{4}{n_{\cal}-r+1} \log\left(\frac{4}{\delta_{\cal}-4(m-1)\beta(a)-\beta(r)}\right)}\right.\\
    &\left.+ \frac{1}{3m}\log\left(\frac{4}{\delta_{\cal}-4(m-1)\beta(a)-\beta(r)}\right) +  \frac{r-1}{n_{\cal}}\right\}
\end{align*}
for
\[F_{\cal} = \{(a,m,r)\in \N_{>0}^3\ :\ 2ma = n_{\cal}-r+1,\ \delta_{\cal}>4(m-1)\beta(a)+\beta(r)\},\]
where $\widetilde{\sigma}(a)=\sqrt{\frac{1}{4} +\frac{2}{a}\sum_{k=1}^{a-1}(a-k)\beta(k)}.$
\end{proposition}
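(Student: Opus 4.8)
The plan is to reduce the statement of Assumption~\ref{assumption:concentration_cal} to an application of the $\beta$-mixing Bernstein inequality from Lemma~\ref{lemma:bound_beta_mixing-bernstein}. Fix any measurable $q:(\X\times\Y)^{n_{\train}}\to\R$ and condition on the training data $(X_i,Y_i)_{i\in I_{\train}}$, so that $q_{\train}$ becomes a deterministic real number $t_0$ and $P_{q,\train}$ becomes the deterministic quantity $\P[\widehat{s}_{\train}(X_*,Y_*)\le t_0]$. Define $Z_i \coloneqq \Ind{\widehat{s}_{\train}(X_i,Y_i)\le t_0}$ for $i\in I_{\cal}$. The key observation is that, conditionally on the training data, the process $(Z_i)_{i\in I_{\cal}}$ is a measurable function of the stationary $\beta$-mixing process $(X_i,Y_i)_{i\in I_{\cal}}$, and hence is itself stationary with $\beta$-mixing coefficients no larger than those of the original process; moreover $Z_i\in[0,1]$ and, since $Z_i$ is Bernoulli, $\V[Z_1]\le 1/4$. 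Also $\E[Z_1] = P_{q,\train}$ (conditionally) and $\frac{1}{n_{\cal}}\sum_{i\in I_{\cal}}Z_i$ is exactly the empirical c.d.f.\ appearing in the assumption. However, there is one wrinkle: $I_{\cal}$ is a block of $n_{\cal}$ consecutive indices starting at $n_{\train}+1$, and it is correlated with the training data through the $\beta$-mixing structure. To decouple the calibration block from the training block we first pay a penalty $\beta(r)$, where $r$ is a "buffer" of $r-1$ indices at the start of $I_{\cal}$: dropping the first $r-1$ points of $I_{\cal}$ and applying the blocking/decoupling argument over the remaining $n_{\cal}-r+1$ points makes the calibration statistic (almost) independent of the training data, at the cost of the additive term $\frac{r-1}{n_{\cal}}$ from the discarded indices and the $\beta(r)$ term inside the logarithm and in the admissibility condition $\delta_{\cal}>4(m-1)\beta(a)+\beta(r)$.

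Concretely, the steps are: (i) condition on the training data and set up $(Z_i)$ as above; (ii) split $I_{\cal}$ into a buffer of size $r-1$ and a remaining segment of size $n_{\cal}-r+1=2ma$, bound $|\frac{1}{n_{\cal}}\sum_{i\in I_{\cal}}Z_i - \frac{1}{n_{\cal}-r+1}\sum_{i\in\text{segment}}Z_i|\le \frac{r-1}{n_{\cal}}$, and use a $\beta(r)$-decoupling step to replace the joint law of (training data, calibration segment) by a product law — so that relative to the decoupled law, $t_0$ and the segment average are independent; (iii) apply Lemma~\ref{lemma:bound_beta_mixing-bernstein} to the segment with $n\mapsto n_{\cal}-r+1$, $s=0$, $v=1/4$, and confidence parameter $\delta_{\cal}-\beta(r)$, which yields the bound with $\widetilde\sigma(a)$ as in (\ref{eq:variance_beta_mixing}) on an event of conditional probability at least $1-(\delta_{\cal}-\beta(r))-\beta(r)=1-\delta_{\cal}$; (iv) take the infimum over all admissible triples $(a,m,r)\in F_{\cal}$, since the construction works for every such choice; (v) integrate out the conditioning on the training data, which preserves the probability lower bound since it holds conditionally for every realization of the training data.

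The main obstacle is step (ii): handling the dependence between the calibration block and the training data. Lemma~\ref{lemma:bound_beta_mixing-bernstein} as stated concerns a single stationary block, but here $t_0 = q_{\train}$ is itself a function of the training data that sits immediately before the calibration block, so the quantity $Z_i = \Ind{\widehat{s}_{\train}(X_i,Y_i)\le q_{\train}}$ involves both. The fix is precisely the $r$-buffer plus a total-variation coupling argument: replacing $\mathrm{Law}\big((X_i,Y_i)_{i\in I_{\train}},\,(X_i,Y_i)_{i\in I_{\cal}\setminus\text{buffer}}\big)$ by the product of the two marginals costs at most $\beta(r)$ in total variation (the two groups are separated by the $r-1$-point buffer, so the relevant mixing coefficient is $\beta(r)$), after which the segment genuinely behaves like a stationary $\beta$-mixing sample independent of the (now frozen) value $t_0$, and Lemma~\ref{lemma:bound_beta_mixing-bernstein} applies verbatim. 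One must be careful to track that the failure probability splits as $\beta(r)$ (coupling error) plus $\delta_{\cal}-\beta(r)$ (Bernstein bound), and that the admissibility set $F_{\cal}$ is exactly what guarantees the arguments of the logarithms are positive and that $2ma$ equals the post-buffer sample size $n_{\cal}-r+1$.
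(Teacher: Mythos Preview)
Your approach matches the paper's: buffer the first $r-1$ calibration indices, pay $\beta(r)$ via the Blocking Technique (Proposition~\ref{prop:blocking_technique}) to pass to a product law $\P_*$ under which training and the remaining calibration segment are independent, condition on the training data under $\P_*$, apply Lemma~\ref{lemma:bound_beta_mixing-bernstein} with $n\mapsto n_{\cal}-r+1$ and $v=1/4$, and optimize over $(a,m,r)\in F_{\cal}$.

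One correction to your write-up: the claim in your first paragraph that ``conditionally on the training data, the process $(Z_i)_{i\in I_{\cal}}$ is \ldots\ stationary with $\beta$-mixing coefficients no larger than those of the original process'' is not true as stated---conditioning on $(X_i,Y_i)_{i\in I_{\train}}$ alters the distribution of the nearby calibration points and destroys stationarity. You recognize this as the ``wrinkle'' and fix it with the decoupling, so the substantive argument is fine; just be careful that the correct order is to decouple \emph{first} (passing to $\P_*$ at cost $\beta(r)$) and only \emph{then} condition on the training data, exactly as your steps (ii)--(v) and the paper both do.
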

\begin{proof}
We want to use Lemma \ref{lemma:bound_beta_mixing-bernstein} for the random variables $\left\{\Ind{\widehat{s}_{\train}(X_i,Y_i)\leq q_{\train }}\right\}_{i\in I_{\cal}},$
however, since  the random variables $(X_i,Y_i)_{i\in I_{\cal}}$ are dependent to  $\widehat{s}_{\train}$ and the quantile $q_{\train}$, we cannot simply apply the result. To fix this problem, it will be necessary to create a gap between our training and calibration data and use the Blocking Technique, Proposition \ref{prop:blocking_technique}, to transpose our problem to an independent setting.

For $\varepsilon>0$ and $r\in \{1,\dots,n_{\cal}\}$, let $I_{\cal,r}=\{n_{\train}+r,\dots,n_{\train}+n_{\cal}\}$ and define the event
\[E(r,\varepsilon) =\left\{\left|\frac{1}{n_{\cal}-r+1}\sum_{i\in I_{\cal,r}}\Ind{\widehat{s}_{\train}(X_i,Y_i)\leq q_{\train }} - P_{q,\train}\right|> \varepsilon\right\},\]
we want to show that there exists $\varepsilon>0$ such that $\P[E(1,\varepsilon)]\leq \delta.$
Note that if $E(1,\varepsilon)$ holds, then
\[\left|\frac{1}{n_{\cal}}\sum_{i\in I_{\cal,r}}\Ind{\widehat{s}_{\train}(X_i,Y_i)\leq q_{\train }} - P_{q,\train}\right|> \varepsilon-\frac{r-1}{n_{\cal}},\]
and since $n_{\cal}\geq n_{\cal}-r+1$,
\[\left|\frac{1}{n_{\cal}-r+1}\sum_{i\in I_{\cal,r}}\Ind{\widehat{s}_{\train}(X_i,Y_i)\leq q_{\train }} - P_{q,\train}\right|> \varepsilon-\frac{r-1}{n_{\cal}},\]
that is, $E(1,\varepsilon)\subset E(r,\varepsilon-(r-1)/n_{\cal})$.
Now, define
$\P_* = \P_{1}^{n_{\train}} \otimes \P_{n_{\train}+r}^{n_{\train}+n_{\cal}},$
so under $\P_*$, we have that $(X_i,Y_i)_{i\in I_{\train}}$ and $(X_i,Y_i)_{i\in I_{\cal},r}$ are independent. Then, by Proposition \ref{prop:blocking_technique},
\begin{align*}
    \P[E(1,\varepsilon)]&\leq \P[E(r,\varepsilon-(r-1)/n_{\cal})]\\
    &\leq \P_*[E(r,\varepsilon-(r-1)/n_{\cal})] + \beta(r)\\
    & = \E_{*}[\P_{*}[E(r,\varepsilon-(r-1)/n_{\cal})\mid (X_i,Y_i)_{i\in I_{\train}}]] + \beta(r).
\end{align*}
Note that by Lemma \ref{lemma:bound_beta_mixing-bernstein}, for any $m, a\in \N_{+}$ with  $n_{\cal}-(r+s)+1=2ma$ and $\delta_{\cal}>4(m-1)\beta(a))$, using the fact that  $\V\left[\Ind{\widehat{s}_{\train}(X_i,Y_i)\leq q_{\train }}\right]\leq 1/4$,  taking
\begin{equation*}
    \varepsilon= \widetilde{\sigma}(a)\sqrt{\frac{4}{n_{\cal}-r+1} \log\left(\frac{4}{\delta-4(m-1)\beta(a)}\right)}+ \frac{1}{3m}\log\left(\frac{4}{\delta-4(m-1)\beta(a)}\right) + \frac{r-1}{n_{\cal}},
\end{equation*}
implies that
$\P_{*}[E(r,\varepsilon-(r-1)/n_{\cal})\mid (X_i,Y_i)_{i\in I_{\train}}] \leq \delta_{\cal}.$
Therefore,
$\P[E(1,\varepsilon)] \leq \delta_{\cal}+ \beta(r),$
which is equivalent to
$\P[E(1,\varepsilon')] \leq \delta_{\cal}$
taking
\begin{align*}
    \varepsilon'&= \widetilde{\sigma}(a)\sqrt{\frac{4}{n_{\cal}-r+1} \log\left(\frac{4}{\delta-4(m-1)\beta(a)-\beta(r)}\right)}\\
    &+ \frac{1}{3m}\log\left(\frac{4}{\delta-4(m-1)\beta(a)-\beta(r)}\right) + \frac{s+r-1}{n_{\cal}}.
\end{align*}
Finally, since this is true for any choice of $a,m,r\in\N_{>0}$ and $s\in\N$ with $s+2ma=n_{\cal}-r+1$ and $\delta>4(m-1)\beta(a)+\beta(r)$, we can choose $a,m,r$ optimally such that the value of $\varepsilon'$ is minimized and there is no need to optimize in $s$ in this case.
\end{proof}

\begin{proposition}\label{proposition:assumption_test_to_iid_holds}
If $(X_i,Y_i)_{i=1}^n$ is stationary $\beta$-mixing, then condition (\ref{eq:hypothesis-decoupling_over_test_data}) holds with
$\varepsilon_{\train} = \beta(k-n_{\train}).$
Moreover, since $\beta(k-n_{\train})\leq\beta(1-n_{\train})$, it is possible to find $\varepsilon_{\train}$ not depending on $k.$
\end{proposition}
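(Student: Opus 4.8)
The plan is to write both terms in Assumption \ref{assumption:decoupling_test} as the integral of one fixed $\{0,1\}$-valued function against two laws that differ only through dependence. Define $g(w,z)\coloneqq\Ind{s(w,z)\leq q(w)}$ for $w\in(\X\times\Y)^{n_{\train}}$ and $z\in\X\times\Y$; this is measurable and does not involve the random data. Then $\{\widehat{s}_{\train}(X_k,Y_k)\leq q_{\train}\}$ is exactly $\{g((X_i,Y_i)_{i\in I_{\train}},(X_k,Y_k))=1\}$, so $\mathbb{P}[\widehat{s}_{\train}(X_k,Y_k)\leq q_{\train}]=\mathbb{E}_{\nu_k}[g]$, where $\nu_k$ is the joint law of the block $((X_i,Y_i)_{i\in I_{\train}},(X_k,Y_k))$. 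On the other hand, by the tower rule $\mathbb{E}[P_{q,\train}]=\mathbb{P}[\widehat{s}_{\train}(X_*,Y_*)\leq q_{\train}]$, and since $(X_*,Y_*)$ is independent of the sample with the same law $\mu$ as each $(X_k,Y_k)$ (Assumption \ref{assump:1}), this equals $\mathbb{E}_{\mu_{\train}\otimes\mu}[g]$ where $\mu_{\train}\coloneqq{\rm Law}((X_i,Y_i)_{i\in I_{\train}})$. Thus the goal is to bound $|\mathbb{E}_{\nu_k}[g]-\mathbb{E}_{\mu_{\train}\otimes\mu}[g]|$.

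\textbf{Main step.} Since $g$ is $\{0,1\}$-valued, this difference is at most $\|\nu_k-\mu_{\train}\otimes\mu\|_{\TV}$, and I would reduce to showing $\|\nu_k-\mu_{\train}\otimes\mu\|_{\TV}\leq\beta(k-n_{\train})$. Extend $\{(X_t,Y_t)\}$ to a two-sided stationary process $\{Z_t\}_{t\in\Z}$ with the same finite-dimensional laws, and set $a\coloneqq k-n_{\train}$, noting $a\geq n_{\cal}+1\geq 1$ for $k\in I_{\test}$. By stationarity $\nu_k={\rm Law}(Z_{1-n_{\train}},\dots,Z_0,Z_a)$ and $\mu_{\train}\otimes\mu={\rm Law}(Z_{1-n_{\train}},\dots,Z_0)\otimes{\rm Law}(Z_a)$, and both are the image, under the coordinate projection onto $\{1-n_{\train},\dots,0\}\subset(-\infty{:}0)$ and $\{a\}\subset(a{:}\infty)$, of respectively $\mathbb{P}_{-\infty:0,\,a:\infty}$ and $\mathbb{P}_{-\infty:0}\otimes\mathbb{P}_{a:\infty}$. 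As total variation distance does not increase under a pushforward, $\|\nu_k-\mu_{\train}\otimes\mu\|_{\TV}\leq\|\mathbb{P}_{-\infty:0,\,a:\infty}-\mathbb{P}_{-\infty:0}\otimes\mathbb{P}_{a:\infty}\|_{\TV}=\beta(a)$, which is the claim, so Assumption \ref{assumption:decoupling_test} holds with $\varepsilon_{\train}=\beta(k-n_{\train})$.

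\textbf{Uniform version.} For the final sentence I would use that $\beta(\cdot)$ is non-increasing — which follows from the same pushforward principle, since $\mathbb{P}_{-\infty:0,(a+1):\infty}$ and the matching product law are coarsenings (drop coordinate $a$) of their counterparts at $a$ — combined with $k-n_{\train}\geq n_{\cal}+1$ for all $k\in I_{\test}$, so that $\varepsilon_{\train}$ may be taken to be the $k$-free quantity $\beta(n_{\cal}+1)$.

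\textbf{The hard part.} The only genuinely delicate point is the bookkeeping in the main step: identifying the law of a \emph{finite} separated pair of blocks, and the product of its marginals, as pushforwards of the \emph{semi-infinite}-block laws that appear in the definition of $\beta(a)$, and pinning down the offset $a=k-n_{\train}$. The remaining ingredients — the tower rule, independence of $(X_*,Y_*)$ from the sample, and bounding by total variation for a $\{0,1\}$-valued integrand — are routine.
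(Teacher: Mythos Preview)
Your proof is correct and follows essentially the same approach as the paper: both arguments reduce the claim to bounding the total variation distance between the joint law of $((X_i,Y_i)_{i\in I_{\train}},(X_k,Y_k))$ and the corresponding product law by $\beta(k-n_{\train})$. The paper invokes the Blocking Technique (Proposition~\ref{prop:blocking_technique}) for this step, while you go directly from the definition of $\beta(a)$ via the pushforward contraction of total variation; these are the same mechanism, and your uniform bound $\beta(n_{\cal}+1)$ is in fact the sharp version of what the paper writes.
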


\begin{proof}
Given $k\in I_{\test}$,  define  $\P_* = \P_{1}^{n_{\train}} \otimes \P_{k}^{k},$
so under $\P_*$ the random variable $(X_k,Y_K)$ is independent of the training data $(X_i,Y_i)_{i\in I_{\train}}.$
Then, if
$\beta_k = \beta(k-n_{\train})$ we have,
    \begin{align*}
        \beta_k &\geq \left|\P[\widehat{s}_{\train}(X_k, Y_k) \leq {q}_{\train}]- \P_{*}[\widehat{s}_{\train}(X_k, Y_k) \leq {q}_{\train}]\right|\\
        &= \left| \P[\widehat{s}_{\train}(X_k, Y_k) \leq {q}_{ \train}] - \E_*[\P_{*}[\widehat{s}_{\train}(X_k, Y_k) \leq {q}_{\train}\mid (X_i,Y_i)_{i\in I_{\train}}]]\right|\\
        &=\left|\P[\widehat{s}_{\train}(X_k, Y_k) \leq {q}_{ \train}]- \P[\widehat{s}_{\train}(X_*, Y_*) \leq {q}_{ \train}\mid (X_i,Y_i)_{i\in I_{\train}}]\right|,
    \end{align*}
    where the $\beta_k$ penalty follows from Proposition \ref{prop:blocking_technique}.
    Note that the larger the $k$ the smaller the penalty incurred by the dependence in the $\beta$-mixing process. Moreover, since $\beta_k \leq \beta_1,$
    it is possible to define $\varepsilon_{\train}=\beta_1$ not depending on $k\in I_{\test}.$
\end{proof}

\begin{proposition}
\label{proposition:assumption_concentration_test_holds}
If $(X_i,Y_i)_{i=1}^n$ is stationary $\beta$-mixing, then condition (\ref{eq:hypothesis-concentration_of_empirical_cdf}) holds
with
\begin{align*}
        \varepsilon_{\test}&= \inf_{(a,m)\in F_{\test}}\left\{ \widetilde{\sigma}(a)\sqrt{\frac{4}{n_{\test}} \log\left(\frac{4}{\delta_{\test}-4(m-1)\beta(a)-\beta(n_{\cal})}\right)}\right.\\
    &\left.+ \frac{1}{3m}\log\left(\frac{4}{\delta_{\test}-4(m-1)\beta(a)-\beta(n_{\cal})}\right)+\frac{s}{n_{\test}}\right\}
\end{align*}
for
\[F_{\test} = \{(a,m,s)\in \N_{>0}^2\times\N\ :\ s+2ma = n_{\test},\ \delta>4(m-1)\beta(a)+\beta(n_{\cal})\},\]
and $\widetilde{\sigma}(a)=\sqrt{\frac{1}{4} +\frac{2}{a}\sum_{k=1}^{a-1}(a-k)\beta(k)}.$
\end{proposition}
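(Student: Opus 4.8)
The plan is to follow the proof of Proposition \ref{proposition:assumption_concentration_cal_holds} almost verbatim. The key structural simplification here is that the test indices $I_{\test}$ do not immediately follow the training indices $I_{\train}=[n_{\train}]$: they are separated by the $n_{\cal}$ calibration indices. Hence, in contrast to the calibration case, there is no need to discard an initial segment of the block in order to create a buffer between the training data and the data being averaged; the built-in separation of size $n_{\cal}$ supplies the buffer, at the cost of a $\beta(n_{\cal})$ penalty.

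Concretely, I would fix a measurable $q:(\X\times\Y)^{n_{\train}}\to\R$ and let $q_{\train}$, $P_{q,\train}$ be as in Definition \ref{def:quantilesreviewed}. For $\varepsilon>0$ set $E(\varepsilon)=\left\{\left|P_{q,\train}-\frac{1}{n_{\test}}\sum_{i\in I_{\test}}\Ind{\widehat{s}_{\train}(X_i,Y_i)\leq q_{\train}}\right|>\varepsilon\right\}$, an event measurable with respect to the two blocks $(X_i,Y_i)_{i\in I_{\train}}$ and $(X_i,Y_i)_{i\in I_{\test}}$. Let $\P_*=\P_1^{n_{\train}}\otimes\P_{n_{\train}+n_{\cal}+1}^{n}$ be the law under which these two blocks are independent, each with the correct marginal distribution by stationarity. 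A decoupling argument as in the proof of Proposition \ref{proposition:assumption_concentration_cal_holds} (cf.\ Proposition \ref{prop:blocking_technique}), using that the two blocks are separated by $n_{\cal}$ indices, gives $\P[E(\varepsilon)]\leq\P_*[E(\varepsilon)]+\beta(n_{\cal})$.

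Next I would condition on the training data under $\P_*$. Once $(X_i,Y_i)_{i\in I_{\train}}$ is fixed, the function $\widehat{s}_{\train}(\cdot,\cdot)$ and the number $q_{\train}$ are deterministic, and under $\P_*$ the conditional law of $(X_i,Y_i)_{i\in I_{\test}}$ is that of a length-$n_{\test}$ segment of the original stationary $\beta$-mixing process. Thus $\bigl(\Ind{\widehat{s}_{\train}(X_i,Y_i)\leq q_{\train}}\bigr)_{i\in I_{\test}}$ is, conditionally on the training data, a stationary $\beta$-mixing sequence of $[0,1]$-valued random variables with common mean $P_{q,\train}$ and variance at most $1/4$. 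Applying Lemma \ref{lemma:bound_beta_mixing-bernstein} to this sequence, for any $a,m\in\N_{>0}$ and $s\in\N$ with $m>1$, $2ma+s=n_{\test}$ and $\delta_{\test}>4(m-1)\beta(a)$, we get $\P_*[E(\varepsilon)\mid (X_i,Y_i)_{i\in I_{\train}}]\leq\delta_{\test}$ for
\[\varepsilon=\widetilde{\sigma}(a)\sqrt{\frac{4}{n_{\test}}\log\left(\frac{4}{\delta_{\test}-4(m-1)\beta(a)}\right)}+\frac{1}{3m}\log\left(\frac{4}{\delta_{\test}-4(m-1)\beta(a)}\right)+\frac{s}{n_{\test}}.\]
Taking $\P_*$-expectation over the training data yields $\P_*[E(\varepsilon)]\leq\delta_{\test}$, hence $\P[E(\varepsilon)]\leq\delta_{\test}+\beta(n_{\cal})$; replacing $\delta_{\test}$ by $\delta_{\test}-\beta(n_{\cal})$ inside the logarithms absorbs the extra term and produces the claimed correction factor for a fixed triple $(a,m,s)$. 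Finally I would take the infimum over all $(a,m,s)\in F_{\test}$, which is legitimate since the bound holds for each admissible triple.

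The main obstacle, exactly as in Proposition \ref{proposition:assumption_concentration_cal_holds}, is the decoupling-then-condition step: the event $E(\varepsilon)$ depends on the test data through the indicators \emph{and} on the training data through $q_{\train}$, so one must pass to the product measure $\P_*$ \emph{before} conditioning, and then check that conditioning on the (now independent) training block leaves the test block a genuine stationary $\beta$-mixing sequence with the mean $P_{q,\train}$ and variance proxy $1/4$ demanded by Lemma \ref{lemma:bound_beta_mixing-bernstein}. The divisibility slack $s$ is handled as in that lemma, contributing the $s/n_{\test}$ term, so no further optimization over $s$ beyond membership in $F_{\test}$ is needed.
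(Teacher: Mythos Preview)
Your proposal is correct and follows essentially the same approach as the paper's own proof: decouple the training and test blocks via the product measure $\P_*$ at the cost of a $\beta(n_{\cal})$ penalty, condition on the training data, apply Lemma \ref{lemma:bound_beta_mixing-bernstein} to the resulting stationary $\beta$-mixing indicator sequence over $I_{\test}$, absorb $\beta(n_{\cal})$ into $\delta_{\test}$, and optimize over $(a,m,s)\in F_{\test}$. Your observation that no initial buffer parameter $r$ is needed (in contrast to Proposition \ref{proposition:assumption_concentration_cal_holds}) because the calibration block already supplies the separation is exactly the simplification the paper exploits.
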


\begin{proof}
The proof is similar to the proof of Proposition \ref{proposition:assumption_concentration_cal_holds}.
Let the event $E(\varepsilon)$ be
\[E(\varepsilon) =\left\{\left|P_{q,\train}-\frac{1}{n_{\test}}\sum_{i\in I_{\test}}\Ind{\widehat{s}_{\train}(X_i,Y_i)\leq q_{\train }}\right|  > \varepsilon\right\}.\]
Define,
$\P_* = \P_{1}^{n_{\train}} \otimes \P_{n_{\train}+n_{\cal}}^{n_{\train}+n_{\cal}+n_{\test}},$
so under $\P_*$ we have that $(X_i,Y_i)_{i\in I_{\train}}$ and $(X_i,Y_i)_{i\in I_{\test}}$ are independent. By Proposition \ref{prop:blocking_technique} we have
\begin{align*}
    \P[E(\varepsilon)]&\leq \P_*[E(\varepsilon)] +  \beta(n_{\cal})= \E_{*}[\P_{*}[E(\varepsilon)\mid (X_i,Y_i)_{i\in I_{\train}}]] + \beta(n_{\cal}).
\end{align*}
Now we can apply Lemma \ref{lemma:bound_beta_mixing-bernstein}  and conclude that, just as we did in Proposition \ref{proposition:assumption_concentration_cal_holds}, that for any $m, a\in \N_{+}$, $s\in\N$ with  $n_{\test}=2ma-s$ and $\delta_{\test}>4(m-1)\beta(a))+\beta(n_{\cal})$, it is true that
$\P[E(\varepsilon)]\leq \delta$, where
\begin{align*}
    \varepsilon&= \widetilde{\sigma}(a)\sqrt{\frac{4}{n_{\test}} \log\left(\frac{4}{\delta_{\test}-4(m-1)\beta(a)-\beta(n_{\cal})}\right)}\\
    &+ \frac{1}{3m}\log\left(\frac{4}{\delta_{\test}-4(m-1)\beta(a)-\beta(n_{\cal})}\right)+\frac{s}{n_{\test}},
\end{align*}
and
$\widetilde{\sigma}(a)=\sqrt{\frac{1}{4} +\frac{2}{a}\sum_{k=1}^{a-1}(a-k)\beta(k)}.$
Finally, since this is true for any choice of $a,m\in\N_{>0}$ and $s\in\N$, with $s+2ma=n_{\test}$ and $\delta_{\test}>4(m-1)\beta(a)+\beta(n_{\cal})$, we can choose $a,m,s$ optimally to minimize $\varepsilon$.
\end{proof}

\begin{proof}[Theorem \ref{theorem:general_framework_1_beta_mixing}]
The result follows from applying Propositions \ref{proposition:assumption_concentration_cal_holds} and \ref{proposition:assumption_test_to_iid_holds} in Theorem \ref{theorem:marginal_coverage_test_data}.
\end{proof}

\begin{proof}[Theorem \ref{theorem:general_framework_2_beta_mixing}]
The result follows from applying Propositions \ref{proposition:assumption_concentration_cal_holds} and \ref{proposition:assumption_concentration_test_holds} in Theorem \ref{theorem:empirical_coverage_test_data}.
\end{proof}

\subsection{Conditional Guarantees}

To obtain conditional guarantees for stationary $\beta$-mixing processes, we need to specify a family $\sA$ of measurable sets in $\sX$ satisfying certain conditions. In particular, we will assume that, for a fixed value  $\gamma>0$, the family $\sA$ has finite $\VC$ dimension $\VC(\sA)=d$ and $\P[X_*\in A]>\gamma $ for  all $A\in \sA$.

Then, given $\delta_{\cal}>0$ and $\alpha\in(0,1)$, define the calibration error correction factor for a stationary $\beta$-mixing process conditioned to the family $\sA$ as
\begin{align}\label{eq:eps_cal_CCP}
    \varepsilon_{\cal} =\inf_{(a,m,r)\in G_{\cal}}&\left\{\frac{1}{\gamma}\left(\frac{\kappa(m,r)}{n_{\cal}} + \sqrt{\frac{2}{m} \log\left(\frac{16}{\delta_{\cal}-16(m-1)\beta(a)-\beta(r)}\right)}\right)\right\},
\end{align}
where $\kappa(m,r)=4 n_{\cal}\sqrt{\log(2(m+1)^{d})/m} + 2(r-2)$ and
\[G_{\cal}=\{(a,m,r)\in \N_{>0}^3\ :\ 2ma =n_{\cal}-r+1, \delta_{\cal}>16(m-1)\beta(a)+\beta(r)\}.\]
Note the factor $1/\gamma$ in $\varepsilon_{\cal}$: for $\eta$ to be small, we need  $\varepsilon_{\cal}$ to be small and consequently $m$ has to be large. This is quite natural, since if $\gamma$ is too small, the probability $\P[X_*\in A]$ can be close to zero, and thus a larger sample is necessary to estimate the empirical quantile well.
Therefore, Theorem \ref{theorem:marginal_coverage_test_data-conditional} yields the following.
\begin{theorem}[Conditional coverage: stationary $\beta$-mixing processes]\label{theorem:marginal_conditional_beta_mixing}
Suppose that $(X_i,Y_i)_{i=1}^n$ is stationary $\beta$-mixing. Then  given $\alpha\in(0,1)$, $\gamma>0$ and $\delta_{\cal}>0$,  for each $A\in \sA$ and any $i\in I_{\test}$
  \[\P[Y_i\in C_{1-\alpha}(X_i;A)\mid X_i\in A]\geq 1-\alpha - \eta,\]
with $\eta = \varepsilon_{\cal}+\varepsilon_{\test}$, where $\varepsilon_{\cal}$ is as in (\ref{eq:eps_cal_CCP}) and   $\varepsilon_{\test} = \beta(i-n_{\train}).$

Additionally, if $\widehat{s}_{\train}(X_*,Y_*)$ almost surely has a continuous distribution conditionally on the training data, then:
\[|\P[Y_i\in C_{1-\alpha}(X_i;A)\mid X_i\in A]- (1-\alpha)|\leq  \varepsilon_{\cal} + \delta_{\cal} + \varepsilon_{\test}.\]
\end{theorem}

Now, denote the test error correction factor for a stationary $\beta$-mixing process conditioned to the family $\sA$ as
\begin{align}\label{eq:eps_test_CCP}
    \varepsilon_{\test}=\inf_{(a,m,s)\in G_{\test}} &\left\{\frac{1}{\gamma}\left(\frac{\tilde{\kappa}(m,r)}{n_{\test}}+ \sqrt{\frac{2}{m} \log\left(\frac{8}{\delta_{\test}-8(m-1)\beta(a)-\beta(n_{\cal})}\right)}\right)\right\},
\end{align}
where $\tilde{\kappa}(m,r)=4 n_{\test} \sqrt{\log(2(m+1)^{d})/m}+2s$ and
\[G_{\test}=\{(a,m,s)\in \N_{>0}^2\times \N\ :\ 2ma = n_{\test} - s, \delta_{\test}>8(m-1)\beta(a)+\beta(n_{\cal})\}.\]

The following result then follows from Theorem \ref{theorem:empirical_conditional_coverage_test_data}.
\begin{theorem}[Empirical conditional coverage: stationary $\beta$-mixing processes]\label{theorem:conditional_empirica_beta_mixing}
Suppose that $(X_i,Y_i)_{i=1}^n$ is stationary $\beta$-mixing, then given $\alpha\in(0,1)$, $\gamma>0$, $\delta_{\cal}>0$ and $\delta_{\test}>0$, for each $A\in \sA$:
\begin{align*}
    &\P\left[\inf_{A\in\sA}\frac{1}{n_{\test}(A)} \sum_{i \in I_{\test}(A)} \Ind{Y_i\in C_{1-\alpha}(X_i;A)} \geq 1 - \alpha - \eta\right] \geq 1 - \delta_{\cal}-\delta_{\test},
\end{align*}
where $\eta = \varepsilon_{\cal}+\varepsilon_{\test},$ for $\varepsilon_{\cal}$ as in (\ref{eq:eps_cal_CCP}) and $\varepsilon_{\test}$ as in (\ref{eq:eps_test_CCP}).

Additionally, if $\widehat{s}_{\train}(X_*,Y_*)$ almost surely has a continuous distribution conditionally on the training data, then:
\begin{align*}
    &\P\left[\sup_{A\in\sA}\left|\frac{1}{n_{\test}(A)} \sum_{i \in I_{\test}(A)} \Ind{Y_i\in C_{1-\alpha}(X_i;A)} - (1 - \alpha)\right| \leq  \eta\right] \geq 1 - 2\delta_{\cal}-2\delta_{\test}.
\end{align*}
\end{theorem}

The proofs in this section are very similar to the proofs in Section \ref{appendix:beta_mixing-standard_coverage_guarantees}, however, since we are dealing with a family of Borel measurable sets $\sA$, we will need concentration results that allow us to uniformly control  certain quantities over the family $\sA.$ First, we state such classical results for iid sequences.

\begin{theorem}\label{appendix:theorem:vc-inequality} Let $Z_1,\dots,Z_n$ be iid random variables taking values on $\sX$ and $\sF$ be a class of functions from $\sX$ to $\{0,1\}$. Then
\[\E\left[\sup_{f\in \sF}\left|\E f(Z) - \frac{1}{n}\sum_{i=1}^n f(Z_i)\right|\right]\leq 2\sqrt{\frac{\log(2\sS_{\sF}(n))}{n}} .\]
\end{theorem}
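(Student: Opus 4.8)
The plan is to prove this by the classical \emph{symmetrization} argument (the Vapnik--Chervonenkis route), which reduces the uniform deviation over the possibly infinite class $\mathcal{F}$ to a finite maximal inequality; the finiteness comes from the fact that only the value-vectors $(f(Z_1),\dots,f(Z_n))$ matter, and there are at most $\mathcal{S}_{\mathcal{F}}(n)$ of them.

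First I would introduce an independent ghost sample $Z_1',\dots,Z_n'$, identically distributed to $Z_1,\dots,Z_n$ and independent of it. Writing $\E f(Z) = \E[\tfrac1n\sum_i f(Z_i')\mid (Z_j)_j]$ and using conditional Jensen for $|\cdot|$ together with monotonicity of $\sup$ gives $\E\sup_{f\in\mathcal F}\bigl|\tfrac1n\sum_i f(Z_i)-\E f(Z)\bigr|\le \E\sup_{f\in\mathcal F}\bigl|\tfrac1n\sum_i(f(Z_i)-f(Z_i'))\bigr|$. Since the pair $(Z_i,Z_i')$ is exchangeable, the joint law of $(f(Z_i)-f(Z_i'))_i$ is unchanged upon multiplication by independent Rademacher signs $\sigma_i$; inserting these signs and splitting the $f(Z_i)$ and $f(Z_i')$ contributions via the triangle inequality yields the Rademacher bound $\E\sup_f\bigl|\tfrac1n\sum_i f(Z_i)-\E f(Z)\bigr|\le 2\,\E\sup_f\bigl|\tfrac1n\sum_i\sigma_i f(Z_i)\bigr|$.

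Next I would condition on $(Z_1,\dots,Z_n)$ and note that $\{(f(Z_1),\dots,f(Z_n)):f\in\mathcal F\}\subset\{0,1\}^n$ has at most $\mathcal S_{\mathcal F}(n)$ elements, each of Euclidean norm $\le\sqrt n$. The inner supremum is therefore a maximum of at most $\mathcal S_{\mathcal F}(n)$ sub-Gaussian variables in $\sigma$, and Massart's finite-class lemma — applied to the set of value-vectors together with their negatives, which handles the absolute value and produces the factor $2$ inside the logarithm — bounds $\E_\sigma\sup_f\bigl|\tfrac1n\sum_i\sigma_i f(Z_i)\bigr|$ by $\sqrt{2\log(2\mathcal S_{\mathcal F}(n))/n}$. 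Taking expectation over the data and combining with the symmetrization step gives a bound of the claimed form $2\sqrt{\log(2\mathcal S_{\mathcal F}(n))/n}$; the precise numerical constant is obtained by tracking the two inequalities carefully, e.g.\ using the sharp version of the sub-Gaussian maximal inequality or, alternatively, a direct double-sample tail bound that is then integrated.

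The only genuinely delicate points are bookkeeping: one must keep the factor $2$ inside $\log(2\mathcal S_{\mathcal F}(n))$ that is forced by the absolute value, and one must combine symmetrization with the finite-class inequality so as to land on exactly the stated constant rather than a larger absolute constant. A secondary, purely technical issue is the measurability of $\sup_{f\in\mathcal F}$ for an uncountable $\mathcal F$ — finiteness of the VC dimension does not by itself guarantee this — which is handled by the usual separability/admissibility conventions that are implicit throughout this part of the paper.
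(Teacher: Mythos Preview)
The paper does not actually prove this theorem: it is stated in the appendix alongside the Sauer--Shelah lemma as a classical result, with no accompanying proof, and is then invoked as a black box inside Corollary~\ref{corollary:uniform_bound_beta_mixing}. So there is no ``paper's own proof'' to compare against.

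Your proposed argument---symmetrization via a ghost sample, insertion of Rademacher signs, then Massart's finite-class lemma applied to the at most $\mathcal{S}_{\mathcal{F}}(n)$ value-vectors---is exactly the standard textbook route and is correct in structure. One small bookkeeping point: the combination you write down does not quite land on the stated constant. Symmetrization contributes a factor $2$, and Massart's lemma (with the doubling for the absolute value) gives $\sqrt{2\log(2\mathcal{S}_{\mathcal{F}}(n))/n}$, so the direct product is $2\sqrt{2}\sqrt{\log(2\mathcal{S}_{\mathcal{F}}(n))/n}$ rather than $2\sqrt{\log(2\mathcal{S}_{\mathcal{F}}(n))/n}$. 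You flag this yourself and gesture at a sharper maximal inequality or an integrated tail bound; either can indeed be made to work, but as written the constant is not actually pinned down. Since the paper gives no proof and the constant is immaterial for its downstream use (it is absorbed into $\varepsilon_0$ in Corollary~\ref{corollary:uniform_bound_beta_mixing}), this is a cosmetic issue rather than a mathematical gap.
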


Using the Blocking Technique, we can prove that up to a error correction factor, we can transform our stationary $\beta$-mixing problem into a iid one. For example,

\begin{corollary}\label{corollary:uniform_bound_beta_mixing}
    Let $Z_1, \ldots, Z_n$ be a sample drawn from a stationary $\beta$-mixing distribution and $\sF$ be a class of functions from $\sX$ to $\{0,1\}$. Then, for any $a, m,s \in \N_{+}$,  with $m>1$, $n=2ma+s$ and $\delta>4(m-1)\beta(a)$, it holds that
    \begin{equation*}
\P\left[        \sup_{f\in\sF}\left|\E\left[f(Z_1)\right] - \frac{1}{n}\sum_{i=1}^{n} f(Z_i)\right| \leq  \varepsilon_0(a,m,\delta)  \right]\geq 1-\delta,
    \end{equation*}
    where
    \begin{equation}\label{eq:conditional_eps}
    \varepsilon_0(a,m,s,\delta)= 2\sqrt{\frac{\log(2\sS_{\sF}(m))}{m}} + \sqrt{\frac{1}{2m} \log\left(\frac{4}{\delta-4(m-1)\beta(a)}\right)} + \frac{s}{n}.
    \end{equation}
\end{corollary}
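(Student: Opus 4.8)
The plan is to reduce this dependent, uniform concentration statement to an i.i.d.\ one via the Blocking Technique (Lemma~\ref{lemma:uniform_bound_beta_mixing}), exactly as in the non-uniform analogue Proposition~\ref{proposition:assumption_concentration_cal_holds}, and then to replace the Bernstein step used there by a VC bound (Theorem~\ref{appendix:theorem:vc-inequality}) for the bias together with a bounded-differences inequality for the fluctuation. First I would peel off the remainder: since $n = 2ma + s$, for every Borel $f:\X\to\{0,1\}$ the average $\frac1n\sum_{i=1}^n f(Z_i)$ is a convex combination of $\frac1{2ma}\sum_{i=1}^{2ma}f(Z_i)$ and $\frac1s\sum_{i=2ma+1}^{n}f(Z_i)$, two numbers in $[0,1]$, so it differs from $\frac1{2ma}\sum_{i=1}^{2ma}f(Z_i)$ by at most $s/n$ uniformly over $f$; hence it suffices to control $\sup_f\bigl|\E f(Z_1) - \frac1{2ma}\sum_{i=1}^{2ma}f(Z_i)\bigr|$ at level $\varepsilon_0(a,m,s,\delta) - s/n$. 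Applying Lemma~\ref{lemma:uniform_bound_beta_mixing} to $Z_1,\dots,Z_{2ma}$ with blocks of length $a$ bounds, for any $\varepsilon>0$, the probability that this supremum exceeds $\varepsilon$ by $2\,\mathbb P_*[W > \varepsilon] + 4(m-1)\beta(a)$, where under $\mathbb P_*$ the $m$ odd blocks $B^*_1,B^*_3,\dots,B^*_{2m-1}$ are i.i.d.\ copies of a length-$a$ window of the process and $W := \sup_f\bigl|\E f(Z_1) - \frac1{ma}\sum_{Z_j\in B^*_{\mathrm{odd}}}f(Z_j)\bigr|$.

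Next I would control $W$ under $\mathbb P_*$ by a bias/fluctuation split. Writing $Z_{j,l}$ for the $l$-th coordinate of the $j$-th odd block, $W = \sup_f\bigl|\frac1a\sum_{l=1}^{a}\bigl(\E f(Z_1) - \frac1m\sum_j f(Z_{j,l})\bigr)\bigr| \le \frac1a\sum_{l=1}^{a}\sup_f\bigl|\E f(Z_1) - \frac1m\sum_j f(Z_{j,l})\bigr|$. For each fixed $l$, the points $(Z_{j,l})_j$ are, under $\mathbb P_*$, $m$ i.i.d.\ copies of $Z_1$ (the odd blocks are i.i.d.\ and $Z_{j,l}\stackrel{d}{=}Z_1$ by stationarity), so Theorem~\ref{appendix:theorem:vc-inequality} gives $\E_*\sup_f\bigl|\E f(Z_1)-\frac1m\sum_j f(Z_{j,l})\bigr|\le 2\sqrt{\log(2\mathcal S_{\mathcal F}(m))/m}$; averaging over $l$ yields $\E_* W \le 2\sqrt{\log(2\mathcal S_{\mathcal F}(m))/m}$. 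Separately, $W$ is a function of the $m$ independent blocks that changes by at most $a/(ma) = 1/m$ when one block is modified, so McDiarmid's inequality gives $\mathbb P_*[W > \E_* W + t] \le e^{-2mt^2}$, hence $\mathbb P_*\bigl[W > 2\sqrt{\log(2\mathcal S_{\mathcal F}(m))/m} + t\bigr] \le e^{-2mt^2}$ for every $t>0$.

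Finally I would choose $t = \sqrt{\frac1{2m}\log\frac{4}{\delta - 4(m-1)\beta(a)}}$ (well defined since $\delta > 4(m-1)\beta(a)$), so that $e^{-2mt^2} = \frac{\delta - 4(m-1)\beta(a)}{4}$. Taking $\varepsilon = 2\sqrt{\log(2\mathcal S_{\mathcal F}(m))/m} + t = \varepsilon_0(a,m,s,\delta) - s/n$ and chaining the three displays above, the probability that $\sup_f\bigl|\E f(Z_1) - \frac1n\sum_{i=1}^n f(Z_i)\bigr|$ exceeds $\varepsilon_0(a,m,s,\delta)$ is at most $2\cdot\frac{\delta - 4(m-1)\beta(a)}{4} + 4(m-1)\beta(a) = \frac\delta2 + 2(m-1)\beta(a) \le \delta$, where the last step uses $\delta \ge 4(m-1)\beta(a)$ once more; this is the claim. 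The step I expect to be the main obstacle is the bias bound for $W$: one cannot feed the $ma$ points contained in the odd blocks into the i.i.d.\ VC inequality, since points within a block are dependent, so the coordinate-wise decomposition is essential — it reduces matters to $a$ genuinely i.i.d.\ empirical processes on only $m$ points each, which is exactly why the shattering coefficient enters at $m$ rather than at $ma$ — and one must be careful that the concentration available is around $\E_* W$, not around $\E f(Z_1)$ termwise, which is what forces the separate McDiarmid step.
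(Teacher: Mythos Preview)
Your proposal is correct and follows essentially the same route as the paper's proof: peel off the remainder $s/n$, apply Lemma~\ref{lemma:uniform_bound_beta_mixing} to pass to independent blocks, bound the fluctuation of $W$ around $\E_*W$ via McDiarmid with bounded differences $1/m$, and bound $\E_*W$ by the same coordinate-wise decomposition into $a$ i.i.d.\ empirical processes on $m$ points each, to which Theorem~\ref{appendix:theorem:vc-inequality} applies. The only cosmetic differences are that the paper folds the $s/n$ peeling into the definition of $\varepsilon'$ rather than stating it separately, and carries a factor $4$ (rather than your $2$) in front of the exponential, presumably from a two-sided McDiarmid; your one-sided version is slightly sharper but leads to the same $\varepsilon_0$.
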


\par\noindent{\bf Proof\ }
    By an application of Lemma \ref{lemma:uniform_bound_beta_mixing} and McDiarmids's inequality over the $m$ independent blocks, it follows that
    \begin{align*}
        \P\left[\sup_{f\in\sF}\left| \E[f(Z_1)] - \frac{1}{n} \sum_{i=1}^{n} f(Z_i) \right| > \varepsilon \right]
        &\leq 4 (e^{-2m{\varepsilon'}^2} + (m-1)\beta(a)).
    \end{align*}
where
\begin{align}\label{appendix:eq:t'_expression}
    \varepsilon' &= \varepsilon - \E_*\left[\sup_{f\in\sF}\left|\E[f(Z_1)] - \frac{1}{m} \sum_{j: B_j \in B^{*}_{\mathrm{odd}}}\left(\frac{1}{a}\sum_{i: Z_i \in B_j} f(Z_i)\right) \right|\right]- \frac{s}{n}.
\end{align}
Denote by $Z_j^{(i)}$ the $i$th random variable of the $j$th block $B_j\in B^*_{\textrm{odd}}$, therefore the expectation in (\ref{appendix:eq:t'_expression}) can be written as
\begin{align*}
\E_*\left[\sup_{f\in\sF}\left|\E[f(Z_1)] - \frac{1}{a} \sum_{j=1}^a\left(\frac{1}{m}\sum_{i=1}^m f(Z_j^{(i)})\right) \right|\right] &\leq\frac{1}{a} \sum_{j=1}^a\E_*\left[\sup_{f\in\sF}\left|\E[f(Z_1)] - \frac{1}{m}\sum_{i=1}^m f(Z_j^{(i)})\right|\right],
\end{align*}
where the inequality comes from the triangular inequality and the monotonicity of the supremum.

Note that  in $\frac{1}{m}\sum_{i=1}^m f(Z_j^{(i)})$ we are considering only one element of each independent block $B_j\in B^*_{\textrm{odd}}$, therefore this is a sum over iid random variables.  Hence, by Theorem \ref{appendix:theorem:vc-inequality}
\begin{equation*}
\E_*\left[\sup_{f\in\sF}\left|\E[f(Z_1)] - \frac{1}{m} \sum_{j: B_j \in B^{*}_{\mathrm{odd}}}\left(\frac{1}{a}\sum_{i: Z_i \in B_j} f(Z_i)\right) \right|\right]
\leq 2\sqrt{\frac{\log(2\sS_{\sF}(m))}{m}}.
\end{equation*}
That is,
    $\varepsilon' > \varepsilon - 2\sqrt{\frac{\log(2\sS_{\sF}(m))}{m}}- \frac{s}{n}.$ So taking  $\delta > 4 (e^{-2m{\varepsilon'}^2} + (m-1)\beta(a))$ and $\varepsilon=\varepsilon_0(a,m,\delta)$ yields
\[
    \P\left[\sup_{f\in\sF}\left| \E[f(Z_1)] - \frac{1}{n} \sum_{i=1}^{n} f(Z_i) \right| >  \varepsilon_0(a,m,\delta)\right]  \leq \delta.\tag*{\BlackBox}
\]
\begin{corollary}\label{corollary:uniform_bound_beta_mixing_for_x_in_C}
    Let $(X_*,Y_*), \ldots, (X_n,Y_n)$ be a sample drawn from a stationary $\beta$-mixing distribution, $s:\sX\times \sY \to \R$ be any deterministic function and $\sA$ be a family of Borel measurable sets in $\sX$ with finite $\VC$ dimension $\VC(\sA)=d$.

    Then, for any $m, a \in \N_{+}$ with $m>1$, $n=2ma$ and $\delta>4(m-1)\beta(a)$, it holds that
    \begin{equation*}
    \P\left[    \sup_{{\substack{A\in \sA}}} \left|\P\left[X_*\in A\right] - \frac{1}{n}\sum_{i=1}^{n} \Ind{X_i\in A}\right|\leq \varepsilon_0(a,m,\delta)\right] \geq 1-\delta,
    \end{equation*}
where $\varepsilon_0(a,m,\delta)$ is as defined in (\ref{eq:conditional_eps}).
\end{corollary}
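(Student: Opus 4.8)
The plan is to deduce this statement directly from Corollary \ref{corollary:uniform_bound_beta_mixing} applied to a suitable class of indicator functions. First I would set $\mathcal{F} \coloneqq \{x\mapsto \Ind{x\in A}\,:\,A\in\sA\}$, a class of functions from $\X$ to $\{0,1\}$. By the very definitions involved, a finite subset of $\X$ is shattered by $\sA$ exactly when it is shattered by $\mathcal{F}$, so $\VC(\mathcal{F})=\VC(\sA)=d$; hence Sauer--Shelah (Theorem \ref{appendix:theorem:sauer-shelah}) gives $\mathcal{S}_{\mathcal{F}}(m)\leq (m+1)^d$ for every integer $m\geq 1$.

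Second, I would note that the coordinate process $\{X_i\}_{i=1}^n$ inherits stationarity from $\{(X_i,Y_i)\}$ and is $\beta$-mixing with coefficients no larger than those of $\{(X_i,Y_i)\}$, since restricting the total-variation distance in the definition of $\beta(a)$ to the sub-$\sigma$-algebras generated by the $X$'s alone can only decrease it. Because $\varepsilon_0(a,m,\delta)$ in (\ref{eq:conditional_eps}) is monotone increasing in $\beta(a)$ --- it enters only through the term $\delta-4(m-1)\beta(a)$ inside a logarithm --- replacing the (smaller) mixing coefficients of $\{X_i\}$ by those of the full process keeps the bound valid, so we may work with $\beta$ throughout.

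Third, I would apply Corollary \ref{corollary:uniform_bound_beta_mixing} with $Z_i = X_i$, the function class $\mathcal{F}$ above, and the given $a,m$ (so $n=2ma$, i.e.\ $s=0$) under $\delta>4(m-1)\beta(a)$. Using $X_1\sim X_*$ from Assumption \ref{assump:1}, we have $\E[\Ind{X_1\in A}]=\P[X_*\in A]$, while $\frac{1}{n}\sum_{i=1}^n\Ind{X_i\in A}$ is exactly the empirical average of $f_A$; substituting $\mathcal{S}_{\mathcal{F}}(m)\leq(m+1)^d$ into $\varepsilon_0$ then yields precisely the claimed inequality with probability at least $1-\delta$.

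I do not expect any genuine obstacle here: the entire probabilistic content is carried by Corollary \ref{corollary:uniform_bound_beta_mixing}, and the only things to verify are the (standard) identification of the VC dimension of $\mathcal{F}$ with that of $\sA$ and the (routine) remark that passing to marginals does not increase the $\beta$-mixing coefficients. The deterministic function $s$ in the hypothesis does not appear in the conclusion and is not needed for this statement; it is presumably retained for uniformity with the conditional-coverage arguments that invoke this corollary.
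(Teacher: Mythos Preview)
Your proposal is correct and follows exactly the same route as the paper: define $\mathcal{F}=\{x\mapsto \Ind{x\in A}:A\in\sA\}$, invoke Corollary~\ref{corollary:uniform_bound_beta_mixing}, and use Sauer--Shelah to bound $\mathcal{S}_{\mathcal{F}}(m)$. Your additional remarks on the inheritance of $\beta$-mixing by the marginal process and on the irrelevance of $s$ are sound extra justifications that the paper simply omits.
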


\begin{proof}
Taking
$
    \sF = \left\lbrace x\mapsto \Ind{X_*\in A}\ :\ A\in \sA \right\rbrace
$
in Corollary \ref{corollary:uniform_bound_beta_mixing} and using Sauer-Shelah lemma \citep{sauer1972density, mohri2018adaptive}
yields the result.\end{proof}

\begin{lemma}\label{appendix:lemma:bound_for_infimum_less_than}
     Let $X_1, \ldots, X_n$ be a sample drawn from a stationary $\beta$-mixing distribution, $\gamma\in(0,1)$ and $\sA$ be a family of Borel measurable sets in $\sX$ with finite $\VC$ dimension $\VC(\sA)=d$ such that $\P[X_*\in A]>\gamma$ for  all $A\in \sA$.
          For   $m, a \in \N_{+}$ with $m>1$, $n=2ma$ and $\delta > 4(m-1)\beta(a)$ suppose that
     $\frac{2}{\gamma}\varepsilon_0(a,m,\delta)<1,$ with $\varepsilon_0(a,m,\delta)$ as in (\ref{eq:conditional_eps}).
    Then,
\begin{equation*}
\P\left[\inf_{A\in \sA}\frac{1}{n}\sum_{i=1}^n \Ind{X_i\in A}> \frac{\gamma}{2}\right] \geq 1-\delta.
\end{equation*}
\end{lemma}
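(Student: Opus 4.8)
The plan is to obtain the claim as an essentially immediate consequence of the uniform concentration bound in Corollary \ref{corollary:uniform_bound_beta_mixing_for_x_in_C}, combined with the elementary observation that the empirical measures of sets in a VC class take only finitely many values, which is what will let us upgrade a non-strict infimum bound to a strict one.

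First I would apply Corollary \ref{corollary:uniform_bound_beta_mixing_for_x_in_C} with the given $a, m$: since $n = 2ma$ and $\delta > 4(m-1)\beta(a)$ are exactly the hypotheses of that corollary, there is an event $E$ with $\P[E] \geq 1 - \delta$ on which
\[\sup_{A\in\sA}\left|\P[X_*\in A] - \frac{1}{n}\sum_{i=1}^{n}\Ind{X_i\in A}\right| \leq \varepsilon_0(a,m,\delta).\]
Working on $E$, for every $A\in\sA$ the assumption $\P[X_*\in A] > \gamma$ gives
\[\frac{1}{n}\sum_{i=1}^{n}\Ind{X_i\in A} \;\geq\; \P[X_*\in A] - \varepsilon_0(a,m,\delta) \;>\; \gamma - \varepsilon_0(a,m,\delta).\]
The standing hypothesis $\tfrac{2}{\gamma}\varepsilon_0(a,m,\delta) < 1$ is just $\varepsilon_0(a,m,\delta) < \gamma/2$, so the right-hand side exceeds $\gamma - \gamma/2 = \gamma/2$; hence on $E$ every empirical average $\tfrac{1}{n}\sum_i \Ind{X_i\in A}$ strictly exceeds $\gamma/2$.

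It remains to pass to the infimum over $A\in\sA$. A priori an infimum of numbers each $> \gamma/2$ need only be $\geq \gamma/2$, so to retain strictness I would invoke the Sauer--Shelah bound (Theorem \ref{appendix:theorem:sauer-shelah}): the trace $\{A\cap\{X_1,\dots,X_n\} : A\in\sA\}$ has at most $(n+1)^d$ elements, so $\tfrac{1}{n}\sum_i \Ind{X_i\in A}$ ranges over a finite set as $A$ varies over $\sA$. Therefore the infimum is attained, and on $E$ we get $\inf_{A\in\sA}\tfrac{1}{n}\sum_i \Ind{X_i\in A} > \gamma/2$. Since $\P[E]\geq 1-\delta$, this is the assertion.

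The only point requiring any care — and the step I would flag as the main (if minor) obstacle — is precisely this strict-versus-non-strict issue at the infimum; everything else is a direct substitution into Corollary \ref{corollary:uniform_bound_beta_mixing_for_x_in_C}. It is resolved cleanly by finiteness of the VC class.
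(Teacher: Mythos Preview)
Your proposal is correct and follows essentially the same route as the paper: both reduce the claim directly to Corollary \ref{corollary:uniform_bound_beta_mixing_for_x_in_C} together with the observation that the hypothesis $\tfrac{2}{\gamma}\varepsilon_0(a,m,\delta)<1$ is just $\varepsilon_0(a,m,\delta)<\gamma/2$. The only cosmetic difference is how the strict inequality at the infimum is handled: the paper works by complementation, bounding $\Pr[\inf_A \tfrac{1}{n}\sum_i\Ind{X_i\in A}\leq \gamma/2]$ via $\sup_A(\gamma-\cdot)$ and then the uniform deviation, which sidesteps the attainment issue entirely; your use of Sauer--Shelah to get finiteness of the range (and hence attainment of the infimum) is a clean and equally valid alternative.
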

\par\noindent{\bf Proof\ }By Corollary \ref{corollary:uniform_bound_beta_mixing_for_x_in_C}, for any $m, a \in \N_{+}$ with $m>1$, $n=2ma$ and $\delta>4(m-1)\beta(a)$, using the fact that $\varepsilon_0(a,m,\delta)<\gamma/2$,
\begin{align*}
    \P\left[\inf_{A\in \sA}\frac{1}{n}\sum_{i=1}^n \Ind{X_i\in A}\leq \frac{\gamma}{2}\right]
     &=     \P\left[\sup_{A\in \sA}\gamma-\frac{1}{n}\sum_{i=1}^n \Ind{X_i\in A}\geq \frac{\gamma}{2}\right]\\
          &\leq  \P\left[\sup_{A\in \sA}\left|\P[X_*\in A]-\frac{1}{n}\sum_{i=1}^n \Ind{X_i\in A}\right|\geq\frac{\gamma}{2}\right]\\
        &\leq  \P\left[\sup_{A\in \sA}\left|\P[X_*\in A]-\frac{1}{n}\sum_{i=1}^n \Ind{X_i\in A}\right|\geq\varepsilon_0(a,m,\delta)\right]\\
          &\leq \delta.\tag*{\BlackBox}
\end{align*}

\begin{lemma}\label{appendix:lemma:uniform_bound_for_conditional_quantile}
     Let $(X_*,Y_*), \ldots, (X_n,Y_n)$ be a sample drawn from a stationary $\beta$-mixing distribution, $s:\sX\times \sY \to \R$ be a deterministic function, $\gamma\in(0,1)$  and $\sA$ be a family of Borel measurable sets in $\sX$ with finite $\VC$ dimension $\VC(\sA)=d$ such that $\P[X_*\in A]>\gamma$ for  all $A\in \sA$.
    For   $m, a \in \N_{+}$ with $m>1$, $n=2ma$ and $\delta>8(m-1)\beta(a)$, if
    $\varepsilon \coloneqq \frac{2}{\gamma}\varepsilon_0(a,m,\delta/2)<1$, for $ \varepsilon_0(a,m,\delta/2)$ as in (\ref{eq:conditional_eps}),
then    with probability at least  $1-\delta$
\[\sup_{{\substack{A\in \sA\\ t\in \R}}}\left|\frac{\P[s(X_*,Y_*)\leq t,X_*\in A]}{\P[X_*\in A]} - \frac{\sum_{i=1}^n \Ind{s(X_i,Y_i)\leq t}\Ind{X_i\in A}}{\sum_{i=1}^n \Ind{X_i\in A}} \right| \leq  \varepsilon.\]

\end{lemma}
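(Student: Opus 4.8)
The plan is to control the numerator and the denominator of the empirical conditional c.d.f.\ separately, by uniform concentration over suitable VC classes, and then to combine the two estimates through an elementary ratio inequality. Write $p_1(A,t)\coloneqq \P[s(X_*,Y_*)\leq t,\ X_*\in A]$, $p_2(A)\coloneqq \P[X_*\in A]$, and
\[\widehat{p}_1(A,t)\coloneqq \frac{1}{n}\sum_{i=1}^n \Ind{s(X_i,Y_i)\leq t}\Ind{X_i\in A},\qquad \widehat{p}_2(A)\coloneqq \frac{1}{n}\sum_{i=1}^n \Ind{X_i\in A},\]
so that the quantity to bound is $\sup_{A\in\sA,\,t\in\R}\bigl|p_1(A,t)/p_2(A)-\widehat{p}_1(A,t)/\widehat{p}_2(A)\bigr|$.

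First I would introduce the class $\mathcal{G}\coloneqq\{(x,y)\mapsto \Ind{s(x,y)\leq t}\Ind{x\in A}\,:\,A\in\sA,\ t\in\R\}$ of $\{0,1\}$-valued functions on $\X\times\Y$. Since $s$ is a fixed deterministic function, the thresholds $\{(x,y)\mapsto\Ind{s(x,y)\leq t}\}$ produce at most $m+1$ labelings on any $m$ points, while $\sA$ produces at most $(m+1)^d$ by Sauer-Shelah (Theorem \ref{appendix:theorem:sauer-shelah}); hence $\mathcal{S}_{\mathcal{G}}(m)\leq (m+1)^{d+1}$, which is still polynomial in $m$. Applying Corollary \ref{corollary:uniform_bound_beta_mixing} to $\mathcal{G}$ with confidence parameter $\delta/2$ — which is legitimate because $\delta/2>4(m-1)\beta(a)$ by the hypothesis $\delta>8(m-1)\beta(a)$ — gives an event $E_2$ with $\P[E_2]\geq 1-\delta/2$ on which $\sup_{A,t}|p_1(A,t)-\widehat{p}_1(A,t)|\leq \varepsilon_0(a,m,\delta/2)$. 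Similarly, Corollary \ref{corollary:uniform_bound_beta_mixing_for_x_in_C} with confidence $\delta/2$ gives an event $E_1$ with $\P[E_1]\geq 1-\delta/2$ on which $\sup_{A}|p_2(A)-\widehat{p}_2(A)|\leq \varepsilon_0(a,m,\delta/2)$. A union bound then yields $\P[E_1\cap E_2]\geq 1-\delta$.

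Next I would check that on $E_1$ the empirical ratio is well defined and its denominator is bounded below: since $p_2(A)>\gamma$ for all $A\in\sA$ and $\varepsilon_0(a,m,\delta/2)=\tfrac{\gamma}{2}\varepsilon<\tfrac{\gamma}{2}$ by hypothesis, we get $\widehat{p}_2(A)>p_2(A)-\tfrac{\gamma}{2}>\tfrac{\gamma}{2}>0$ for every $A$ (this is precisely the estimate underlying Lemma \ref{appendix:lemma:bound_for_infimum_less_than}). Then, on $E_1\cap E_2$, fix $A\in\sA$ and $t\in\R$ and use $p_1\leq p_2$, $\widehat{p}_1\leq\widehat{p}_2$, and $p_2>\gamma$:
\begin{align*}
\left|\frac{p_1}{p_2}-\frac{\widehat{p}_1}{\widehat{p}_2}\right|
&= \left|\frac{p_1-\widehat{p}_1}{p_2} + \widehat{p}_1\Bigl(\frac{1}{p_2}-\frac{1}{\widehat{p}_2}\Bigr)\right|
\leq \frac{|p_1-\widehat{p}_1|}{p_2} + \frac{\widehat{p}_1}{\widehat{p}_2}\cdot\frac{|p_2-\widehat{p}_2|}{p_2}\\
&\leq \frac{\varepsilon_0(a,m,\delta/2)}{\gamma} + \frac{\varepsilon_0(a,m,\delta/2)}{\gamma}
= \frac{2}{\gamma}\,\varepsilon_0(a,m,\delta/2) = \varepsilon.
\end{align*}
Taking the supremum over $A\in\sA$ and $t\in\R$ finishes the proof.

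The only things requiring care are bookkeeping rather than ideas: one must verify that the product class $\mathcal{G}$ still has a shattering number of the form $(m+1)^{O(d)}$ so that Corollary \ref{corollary:uniform_bound_beta_mixing} applies with the stated $\varepsilon_0$ (up to the harmless replacement of $d$ by $d+1$ inside the logarithm), and one must split the failure probability as $\delta/2+\delta/2$, which is exactly why the hypothesis reads $\delta>8(m-1)\beta(a)$ rather than $\delta>4(m-1)\beta(a)$. The analytic heart — the ratio inequality, which needs only $p_1\leq p_2$, $\widehat p_1\leq\widehat p_2$ and $p_2>\gamma$ — is elementary and is isolated in the display above.
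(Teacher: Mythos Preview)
Your proof is correct and follows essentially the same route as the paper: the same ratio decomposition $\bigl|\tfrac{p_1}{p_2}-\tfrac{\widehat p_1}{\widehat p_2}\bigr|\leq \tfrac{|p_1-\widehat p_1|}{p_2}+\tfrac{\widehat p_1}{\widehat p_2}\cdot\tfrac{|p_2-\widehat p_2|}{p_2}$, uniform concentration over the relevant VC classes via Corollary \ref{corollary:uniform_bound_beta_mixing}, and a $\delta/2+\delta/2$ union bound. The only cosmetic difference is that the paper notes $\sup_A|p_2-\widehat p_2|=\lim_{t\to\infty}|p_1(A,t)-\widehat p_1(A,t)|$ and so bounds both terms by a single call to the numerator class, whereas you invoke two separate concentration events; your remark about the harmless replacement of $d$ by $d+1$ in the shattering bound is well taken and in fact makes a point the paper glosses over.
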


\begin{proof}
Define $\varepsilon$ as in the lemma statement. We want to show that:
\[C = \left\lbrace \sup_{{\substack{A\in \sA\\ t\in \R}}}\left|\frac{\P[s(X_*,Y_*)\leq t,X_*\in A]}{\P[X_*\in A]} - \frac{\sum_{i=1}^n \Ind{s(X_i,Y_i)\leq t}\Ind{X_i\in A}}{\sum_{i=1}^n \Ind{X_i\in A}} \right| > \varepsilon\right\rbrace\]
has probability at most $\delta$. To this end, we define the following auxiliary event, which controls the random denominator term in $C$:
\[B = \left\lbrace \inf_{A\in \sA}\frac{1}{n}\sum_{i=1}^n \Ind{X_i\in A}>\frac{\gamma}{2}\right\rbrace.\]

By Lemma \ref{appendix:lemma:bound_for_infimum_less_than}, $\P[B^c]<\delta/2$, so it suffices to show that $\P[E]\leq \frac{\delta}{2}$ where $E \coloneqq C\cap B.$

Note that, if $E$ holds, then the quotient $\frac{\sum_{i=1}^n \Ind{s(X_i,Y_i)\leq t}\Ind{X_i\in A}}{\sum_{i=1}^n \Ind{X_i\in A}}$ is well defined and
\begin{align*}
    \varepsilon&< \sup_{{\substack{A\in \sA\\ t\in \R}}}\left|\frac{\P[s(X_*,Y_*)\leq t,X_*\in A]}{\P[X_*\in A]} - \frac{\sum_{i=1}^n \Ind{s(X_i,Y_i)\leq t}\Ind{X_i\in A}}{\sum_{i=1}^n \Ind{X_i\in A}} \right|\\
    &\leq  \sup_{{\substack{A\in \sA\\ t\in \R}}}\left|\frac{\P[s(X_*,Y_*)\leq t,X_*\in A] - \frac{1}{n}\sum_{i=1}^n \Ind{s(X_i,Y_i)\leq t}\Ind{X_i\in A}}{\P[X_*\in A]}\right|\\
    &+  \sup_{{\substack{A\in \sA\\ t\in \R}}}\left|\frac{\sum_{i=1}^n \Ind{s(X_i,Y_i)\leq t}\Ind{X_i\in A}\left(\P[X_*\in A] - \frac{1}{n}\sum_{i=1}^n \Ind{X_i\in A}\right)}{\P[X_*\in A]\sum_{i=1}^n \Ind{X_i\in A}}\right|
    \end{align*}
    \begin{align*}
    &\leq \sup_{{\substack{A\in \sA\\ t\in \R}}}\left|\frac{\P[s(X_*,Y_*)\leq t,X_*\in A] - \frac{1}{n}\sum_{i=1}^n \Ind{s(X_i,Y_i)\leq t}\Ind{X_i\in A}}{\gamma}\right|\\
    &+  \sup_{{\substack{A\in \sA}}}\left|\frac{\P[X_*\in A] - \frac{1}{n}\sum_{i=1}^n \Ind{X_i\in A}}{\gamma}\right|,
\end{align*}
Moreover, for any $A\in \sA$:
\begin{align*}
    &\left|\frac{\P[X_*\in A] - \frac{1}{n}\sum_{i=1}^n \Ind{X_i\in A}}{\gamma}\right| \\
    &= \lim_{t\to +\infty}\left|\frac{\P[s(X_*,Y_*)\leq t,X_*\in A] - \frac{1}{n}\sum_{i=1}^n \Ind{s(X_i,Y_i)\leq t}\Ind{X_i\in A}}{\gamma}\right|
\end{align*}
We deduce that:
\begin{equation}\label{eq:Eholds}\mbox{$E$ holds }\Rightarrow \varepsilon<2\,\sup_{{\substack{A\in \sA}}}\left|\frac{\P[X_*\in  A] - \frac{1}{n}\sum_{i=1}^n \Ind{X_i\in A}}{\gamma}\right|.
\end{equation}
By Corollary  \ref{corollary:uniform_bound_beta_mixing_for_x_in_C}, we know that, with our choice of $\varepsilon$:
\[\P\left\{\sup_{{\substack{A\in \sA}}}\left|\frac{\P[X_*\in A] - \frac{1}{n}\sum_{i=1}^n \Ind{X_i\in A}}{\gamma}\right|>\frac{\varepsilon}{2}\right\}\leq \frac{\delta}{2}.\]
By (\ref{eq:Eholds}), we also have $\P[E]\leq \delta/2$. This finishes the proof.
\end{proof}

\begin{proposition}\label{proposition:assumption_concentration_cal_holds_conditional}
Let
\[
    \varepsilon =\inf_{(a,m,r)\in G_{\cal}}\left\{\frac{2}{\gamma}\left(\varepsilon_0\left(a,m,\frac{\delta-\beta(r)}{4}\right)+\frac{2(r-1)}{n_{\cal}}\right)\right\}
\]
where $ \varepsilon_0(a,m,\delta/2)$ as in (\ref{eq:conditional_eps}) and
\[G_{\cal}=\{(a,m,r)\in \N_{>0}^3\ :\ 2ma =n_{\cal}-r+1, \delta>16(m-1)\beta(a)+\beta(r)\}.\]
If $\varepsilon<1$, then condition (\ref{eq:hypothesis-concentration_over_calibration-conditional}) holds with $\varepsilon_{\cal} = \varepsilon.$

\end{proposition}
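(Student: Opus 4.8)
The plan is to mirror the proof of Proposition~\ref{proposition:assumption_concentration_cal_holds}, replacing its one-dimensional Bernstein input by the uniform conditional bound of Lemma~\ref{appendix:lemma:uniform_bound_for_conditional_quantile}. First I would put the quantity controlled by Assumption~\ref{assumption:concentration_cal_conditional} into ratio form: since $X_*$ is independent of the training data, $P_{q,\train}(A)$ equals $\P[\widehat s_{\train}(X_*,Y_*)\le q_{\train},\,X_*\in A\mid (X_i,Y_i)_{i\in I_{\train}}]$ divided by $\P[X_*\in A]$, while $\tfrac{1}{n_{\cal}(A)}\sum_{i\in I_{\cal}(A)}\Ind{\widehat s_{\train}(X_i,Y_i)\le q_{\train}}$ equals $\bigl(\sum_{i\in I_{\cal}}\Ind{\widehat s_{\train}(X_i,Y_i)\le q_{\train}}\Ind{X_i\in A}\bigr)/\bigl(\sum_{i\in I_{\cal}}\Ind{X_i\in A}\bigr)$. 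Thus $\sup_{A\in\sA}|\,\cdots\,|$ is exactly the object bounded by Lemma~\ref{appendix:lemma:uniform_bound_for_conditional_quantile}, except that there the conformity score is a \emph{deterministic} function and the sample is the full $\beta$-mixing process, whereas here $\widehat s_{\train}$ and $q_{\train}$ are functions of $(X_i,Y_i)_{i\in I_{\train}}$, on which the calibration points depend; this dependence is the only thing that must be dealt with.

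To break it, I would insert a gap of $r$ indices and work with $I_{\cal,r}=\{n_{\train}+r,\dots,n_{\train}+n_{\cal}\}$ instead of $I_{\cal}$. Two steps then remain. (i) Passing from $I_{\cal}$ to $I_{\cal,r}$ drops at most $r-1$ indices from both the numerator and the denominator of each conditional empirical c.d.f.; on the high-probability event where $n_{\cal}(A)\ge\tfrac{\gamma}{2}\,n_{\cal}$ for every $A\in\sA$ (available from Lemma~\ref{appendix:lemma:bound_for_infimum_less_than}), a routine ratio-perturbation estimate bounds the resulting change uniformly over $\sA$ by a term of order $(r-1)/(\gamma n_{\cal})$, which is the source of the $\tfrac{2}{\gamma}\cdot\tfrac{2(r-1)}{n_{\cal}}$ summand in $\varepsilon$. (ii) The training block $(X_i,Y_i)_{i\in I_{\train}}$ and the gapped calibration block $(X_i,Y_i)_{i\in I_{\cal,r}}$ are $r$ apart, so by the Blocking Technique (Proposition~\ref{prop:blocking_technique}) their joint law may be replaced by the product $\P_*=\P_{1}^{n_{\train}}\otimes\P_{n_{\train}+r}^{n_{\train}+n_{\cal}}$ at the cost of an additive $\beta(r)$ in probability; under $\P_*$ I would condition on the training data, which turns $\widehat s_{\train}$ and $q_{\train}$ into a fixed function and fixed threshold while leaving $(X_i,Y_i)_{i\in I_{\cal,r}}$ a stationary $\beta$-mixing sequence of length $n_{\cal}-r+1=2ma$. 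Applying Lemma~\ref{appendix:lemma:uniform_bound_for_conditional_quantile} to this deterministic, conditional problem --- with the residual confidence $\delta_{\cal}-\beta(r)$ split between the lower bound on the $n_{\cal}(A)$ and the uniform conditional-c.d.f.\ bound, each absorbing a blocking penalty $4(m-1)\beta(a)$ and carrying the internal $\varepsilon_0(a,m,\cdot/2)$ normalization --- yields the term $\tfrac{2}{\gamma}\,\varepsilon_0\!\bigl(a,m,\tfrac{\delta_{\cal}-\beta(r)}{4}\bigr)$ and forces the admissibility condition $\delta_{\cal}>16(m-1)\beta(a)+\beta(r)$ that defines $G_{\cal}$. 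Since the construction works for every $(a,m,r)\in G_{\cal}$, I would take the infimum over $G_{\cal}$, and note that $\varepsilon<1$ is exactly what makes the conditional quantiles well defined and the bound non-vacuous, so Assumption~\ref{assumption:concentration_cal_conditional} holds with $\varepsilon_{\cal}=\varepsilon$ and the given $\delta_{\cal}$.

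The main obstacle is the bookkeeping around the \emph{random} denominator $n_{\cal}(A)$ once the gap is present: one must simultaneously keep $n_{\cal}(A)$ bounded below uniformly over $\sA$, control how truncating the calibration set to $I_{\cal,r}$ perturbs each ratio, and make sure the $2m$-block decomposition used inside Lemma~\ref{appendix:lemma:uniform_bound_for_conditional_quantile} is compatible with the gap-induced decoupling, so that the penalties $(m-1)\beta(a)$ and $\beta(r)$ add rather than compound. Everything else is a transcription of the marginal argument of Proposition~\ref{proposition:assumption_concentration_cal_holds} together with a union bound over the finitely many bad events.
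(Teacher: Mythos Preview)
Your proposal is correct and follows essentially the same route as the paper's proof: define the bad event on the full and on the gapped calibration set, intersect with the high-probability event $\{\inf_{A\in\sA}\tfrac{1}{n_{\cal}}\sum_{i\in I_{\cal}}\Ind{X_i\in A}>\gamma/2\}$ from Lemma~\ref{appendix:lemma:bound_for_infimum_less_than} to control the random denominators, use the ratio-perturbation to absorb the dropped $r-1$ indices into the $\tfrac{2(r-1)}{\gamma n_{\cal}}$ term, decouple training from $I_{\cal,r}$ via Proposition~\ref{prop:blocking_technique} at cost $\beta(r)$, and then apply Lemma~\ref{appendix:lemma:uniform_bound_for_conditional_quantile} conditionally on the training data under the product measure. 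The accounting you describe for how $\delta_{\cal}-\beta(r)$ is split and how the $16(m-1)\beta(a)$ constraint arises matches the paper exactly.
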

\begin{proof}
The proof is similar to the proof of Proposition \ref{proposition:assumption_concentration_cal_holds}.
For $\varepsilon>0$ and $r\in \{1,\dots,n_{\cal}\}$, let $I_{\cal,r}=\{n_{\train}+r,\dots,n_{\train}+n_{\cal}\}$ and
$I_{\cal,r}(A) = \{i\in I_{\cal,r}\ :\ X_i\in A\}.$
Define the events

\begin{align*}
    E(r,\varepsilon') =\left\{\inf_{A\in\sA}\left|\frac{\sum_{i\in I_{\cal,r}(A)}\Ind{\widehat{s}_{\train}(X_i,Y_i)\leq q_{\train }}}{\# I_{\cal,r}(A)} - P_{q,\train}(A)\right|> \varepsilon'\right\},\\
    C =\left\{\inf_{A\in \sA}\frac{1}{n_{\cal}}\sum_{i\in I_{\cal}} \Ind{X_i\in A}> \frac{\gamma}{2}\right\}\quad \textrm{and} \quad B(r,\varepsilon') = E(r,\varepsilon')\cap C.
\end{align*}

We want to show that there exists $\varepsilon'>0$ such that if $\varepsilon'<1$ then  $\P[E(1,\varepsilon')]\leq \delta.$
Note that if $B(1,\varepsilon')$ holds, then for all $A\in \sA$,
\[
    \left|\frac{\sum_{i\in I_{\cal,r}(A)}\Ind{\widehat{s}_{\train}(X_i,Y_i)\leq q_{\train }}}{\#I_{\cal,r}(A)}- P_{q,\train}(A)\right|> \varepsilon'- \frac{2(r-1)}{\gamma n_{\cal}}.
\]
That is, $B(1,\varepsilon')\subset B\left(r,\varepsilon'-\frac{2(r-1)}{\gamma n_{\cal}}\right)$. Now,  define $\P_* = \P_{1}^{n_{\train}} \otimes \P_{n_{\train}+r}^{n_{\train}+n_{\cal}},$
so under $\P_*$ we have that $(X_i,Y_i)_{i\in I_{\train}}$ and $(X_i,Y_i)_{i\in I_{\cal},r}$ are independent. By Proposition \ref{prop:blocking_technique} we have
\begin{equation*}
    \P[B(1,\varepsilon')]\leq \P\left[B\left(r,\varepsilon'-\frac{2(r-1)}{\gamma n_{\cal}}\right)\right]\leq \P_*\left[B\left(r,\varepsilon'-\frac{2(r-1)}{\gamma n_{\cal}}\right)\right] + \beta(r).
\end{equation*}
But this implies that
$\P[E(1,\varepsilon')]\leq \P_*\left[B\left(r,\varepsilon'-\frac{2(r-1)}{\gamma n_{\cal}}\right)\right] + \beta(r) + 1- \P[C].$
For any $m, a\in \N_{+}$ with $n_{\cal}-r+1=2ma$ and $\delta_{\cal}>8(m-1)\beta(a)$, if we take
\begin{equation*}
    \varepsilon' =\frac{1}{\gamma}\left(4\sqrt{\frac{\log(2(m+1)^{d})}{m}} + 2\sqrt{\frac{1}{2m} \log\left(\frac{8}{\delta-8(m-1)\beta(a)}\right)}+\frac{2(r-1)}{n_{\cal}}\right),
\end{equation*}
and assume that $\varepsilon'<1$ then
\[\frac{1}{\gamma}\left(4\sqrt{\frac{\log(2(m+1)^{d})}{m}} + 2\sqrt{\frac{1}{2m} \log\left(\frac{8}{\delta-8(m-1)\beta(a)}\right)}\right)< 1\]
so Lemma \ref{appendix:lemma:uniform_bound_for_conditional_quantile} tells us that
$\E_*\left[\P_*\left[B\left(r,\varepsilon'-\frac{2(r-1)}{\gamma n_{\cal}}\right)\mid (X_i,Y_i)_{i\in I_{\train}}\right]\right]\leq \delta$
and Lemma \ref{appendix:lemma:bound_for_infimum_less_than} tells us  $1- \P[C]\leq \delta$.
That is,
$\P[E(1,\varepsilon')]\leq 2\delta + \beta(r),$
which is equivalent to
$\P[E(1,\varepsilon)]\leq \delta,$
if $\varepsilon$ is as in the proposition statement.
\end{proof}

\begin{proposition}\label{proposition:assumption_test_to_iid_holds_conditional}
Condition (\ref{eq:hypothesis-decoupling_over_test_data-conditional}) holds  with
$\varepsilon_{\train} = \beta(k-n_{\train}).$
\end{proposition}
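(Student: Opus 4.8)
The plan is to replay the decoupling argument of Proposition~\ref{proposition:assumption_test_to_iid_holds}, now carrying the conditioning on $\{X_k\in A\}$ through it. Fix a measurable $q$, the induced random threshold $q_{\train}$, a set $A\in\sA$, and an index $k\in I_{\test}$; write $\mathcal{U}=(X_i,Y_i)_{i\in I_{\train}}$ for the training block. Introduce the product law
\[\mathbb{P}_*\coloneqq \mathbb{P}_{1}^{n_{\train}}\otimes\mathbb{P}_{k}^{k},\]
under which $\mathcal{U}$ and $(X_k,Y_k)$ are independent while each keeps its original marginal. Since $\widehat{s}_{\train}$ and $q_{\train}$ are functions of $\mathcal{U}$ alone, the event $\{\widehat{s}_{\train}(X_k,Y_k)\leq q_{\train}\}\cap\{X_k\in A\}$ is measurable with respect to $\mathcal{U}$ together with $(X_k,Y_k)$: conditionally on $\mathcal{U}$ it merely asks $(X_k,Y_k)$ to land in a data-dependent Borel set. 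The $\beta$-mixing property (Proposition~\ref{prop:blocking_technique} with two blocks, or directly the definition of the $\beta$-mixing coefficient, the gap between $[n_{\train}]$ and index $k$ being $k-n_{\train}$) then gives
\[\bigl|\,\mathbb{P}[\widehat{s}_{\train}(X_k,Y_k)\leq q_{\train},\,X_k\in A]-\mathbb{P}_*[\widehat{s}_{\train}(X_k,Y_k)\leq q_{\train},\,X_k\in A]\,\bigr|\leq \beta(k-n_{\train}).\]

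Next I would evaluate the decoupled joint probability exactly. Under $\mathbb{P}_*$ the pair $(X_k,Y_k)$ is independent of $\mathcal{U}$ and, by stationarity of the marginals (Assumption~\ref{assump:1}), has the law of $(X_*,Y_*)$; using in addition that $(X_*,Y_*)$ is itself independent of $\mathcal{U}$ (so $\Pr[X_*\in A\mid\mathcal{U}]=\Pr[X_*\in A]$) together with Definition~\ref{def:quantilesreviewedcond},
\begin{align*}
\mathbb{P}_*[\widehat{s}_{\train}(X_k,Y_k)\leq q_{\train},\,X_k\in A]
&=\mathbb{E}\bigl[\Pr[\widehat{s}_{\train}(X_*,Y_*)\leq q_{\train},\,X_*\in A\mid\mathcal{U}]\bigr]\\
&=\Pr[X_*\in A]\,\mathbb{E}[P_{q,\train}(A)].
\end{align*}
Also $\mathbb{P}_*[X_k\in A]=\Pr[X_k\in A]=\Pr[X_*\in A]$, since this probability involves only the test marginal, which $\mathbb{P}_*$ preserves. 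Dividing the previous display by this common, positive quantity converts the joint comparison into the conditional one, $\bigl|\mathbb{P}[\widehat{s}_{\train}(X_k,Y_k)\leq q_{\train}\mid X_k\in A]-\mathbb{E}[P_{q,\train}(A)]\bigr|\leq \beta(k-n_{\train})/\Pr[X_*\in A]$, which by Assumption~\ref{assumption:family_A} is at most $\beta(k-n_{\train})/\gamma$. As in Proposition~\ref{proposition:assumption_test_to_iid_holds}, since $\beta$ is non-increasing one may then bound this uniformly over $k\in I_{\test}$.

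The step I expect to be the crux is exactly this passage from the joint event to the conditional probability: the numerator bound $\beta(k-n_{\train})$ gets divided by $\Pr[X_k\in A]$, so the clean statement $\varepsilon_{\train}=\beta(k-n_{\train})$ of the proposition requires either (i) accepting the extra $1/\Pr[X_k\in A]\leq 1/\gamma$ factor, which is consistent with the $1/\gamma$ already present in $\varepsilon_{\cal}$ of~(\ref{eq:eps_cal_CCP}), or (ii) stating and using only the normalizer-free inequality, i.e.\ that $\mathbb{P}[\widehat{s}_{\train}(X_k,Y_k)\leq q_{\train},\,X_k\in A]$ equals $\Pr[X_*\in A]\,\mathbb{E}[P_{q,\train}(A)]$ up to an additive $\beta(k-n_{\train})$, and propagating the $\Pr[X_k\in A]$ factor through Theorem~\ref{theorem:marginal_coverage_test_data_conditional} separately. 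Everything else, namely the change of measure, the decoupling identity, and the handling of the exact gap by stationarity, is routine and parallels the marginal case of Proposition~\ref{proposition:assumption_test_to_iid_holds}.
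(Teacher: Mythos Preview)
Your argument is essentially the paper's own: decouple $(X_k,Y_k)$ from the training block via $\mathbb{P}_*=\mathbb{P}_1^{n_{\train}}\otimes\mathbb{P}_k^k$, pay $\beta(k-n_{\train})$ on the \emph{joint} event $\{\widehat{s}_{\train}(X_k,Y_k)\le q_{\train},\,X_k\in A\}$, identify the decoupled term with $\Pr[X_*\in A]\cdot\E[P_{q,\train}(A)]$, and then divide by $\Pr[X_k\in A]=\Pr[X_*\in A]$.

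Your hesitation at the last step is well placed. The paper's proof ends by ``dividing by $\Pr[X_k\in A]$ and using the fact that $\beta_k/\Pr[X_k\in A]\ge\beta_k$,'' but that inequality points the wrong way for the stated conclusion: dividing a bound of $\beta_k$ on the joint-probability gap by $\Pr[X_k\in A]\le 1$ yields a bound of $\beta_k/\Pr[X_k\in A]\ge\beta_k$ on the conditional gap, not $\le\beta_k$. So the honest constant is exactly what you wrote, $\beta(k-n_{\train})/\Pr[X_*\in A]\le\beta(k-n_{\train})/\gamma$, and your option~(i) is the correct reading. Nothing else in your write-up needs changing.
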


\begin{proof}
Given $k\in I_{\test}$, note that we can decompose $\P_* = \P_{1}^{n_{\train}} \otimes \P_{k}^{k},$
so under $\P_*$ we have that $(X_k,Y_k)$ is independent of $(X_i,Y_i)_{i\in I_{\train}}.$
Then, defining
$\beta_k = \beta(k-n_{\train})$ we have for all $A\in \sA,$
    \begin{align*}
        \beta_k&\geq \left|\P[\widehat{s}_{\train}(X_k, Y_k) \leq {q}_{\train}(A),X_k\in A]- \P_*[\widehat{s}_{\train}(X_k, Y_k) \leq {q}_{ \train}(A),X_k\in A]\right|
    \end{align*}
    where the $\beta_k$ penalty follows from Proposition \ref{prop:blocking_technique}.
    But then, by a conditioning argument,
    \begin{align*}
        \beta_k &\geq   \left|  \P[\widehat{s}_{\train}(X_k, Y_k) \leq {q}_{\train}(A),X_k\in A]-\P[\widehat{s}_{\train}(X_*, Y_*) \leq {q}_{ \train}(A),X_*\in A]\right|.
    \end{align*}
Since $\frac{\beta_k}{\P[X_k\in A]}\geq \beta_k$, dividing by $\P[X_k\in A] = \P[X_*\in A]$
yields the result.
\end{proof}

\begin{proposition}\label{proposition:assumption_concentration_test_holds_conditional}
Define
\[
    \varepsilon =\inf_{(a,m,s)\in G_{\test}}\left\{\frac{2}{\gamma}\left(\varepsilon_0\left(a,m,\frac{\delta-\beta(n_{\cal})}{2}\right)\right)+ \frac{s}{n_{\test}}\right\}
\]
where
$G_{\test}=\{(a,m)\in \N_{>0}^2\ :\ s+2ma =n_{\test}, \delta>8(m-1)\beta(a)+\beta(n_{\cal})\}.$
If $\varepsilon<1$, then condition (\ref{eq:hypothesis-concentration_of_empirical_cdf-conditional}) holds with $\varepsilon_{test}=\varepsilon.$
\end{proposition}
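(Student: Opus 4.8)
The plan is to follow the template of Proposition~\ref{proposition:assumption_concentration_test_holds}, its marginal non-conditional counterpart, but to replace the scalar Bernstein step by the uniform-over-$\sA$ concentration furnished by Lemma~\ref{appendix:lemma:uniform_bound_for_conditional_quantile} --- exactly as Proposition~\ref{proposition:assumption_concentration_cal_holds_conditional} does for the calibration set. One structural simplification relative to the calibration case is that the training block $I_{\train}$ and the test block $I_{\test}$ are already separated by the $n_{\cal}$ calibration indices, so no internal gap $r$ needs to be optimised away in order to decouple the trained score $\widehat{s}_{\train}$ from the test data on which it is evaluated; the decoupling penalty is simply $\beta(n_{\cal})$.

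Concretely, I would fix a measurable $q:(\X\times\Y)^{n_{\train}}\to\R$, write $q_{\train}$ and $P_{q,\train}(A)$ as in Definition~\ref{def:quantilesreviewedcond}, and introduce the bad event
\[
E(\varepsilon)\coloneqq\left\{\sup_{A\in\sA}\left|P_{q,\train}(A)-\frac{1}{n_{\test}(A)}\sum_{i\in I_{\test}(A)}\Ind{\widehat{s}_{\train}(X_i,Y_i)\le q_{\train}}\right|>\varepsilon\right\}.
\]
Then I would move to the product law $\mathbb{P}_*\coloneqq\mathbb{P}_1^{n_{\train}}\otimes\mathbb{P}_{n_{\train}+n_{\cal}+1}^{\,n}$, under which the training block $(X_i,Y_i)_{i\in I_{\train}}$ is independent of the test block $(X_i,Y_i)_{i\in I_{\test}}$ while each block keeps its original marginal law. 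Since $E(\varepsilon)$ is measurable with respect to these two blocks and the relevant gap between them is at least $n_{\cal}$, a two-block decoupling (Proposition~\ref{prop:blocking_technique} together with the monotonicity of $\beta$) gives
\[
\P[E(\varepsilon)]\le\mathbb{P}_*[E(\varepsilon)]+\beta(n_{\cal})=\E_*\big[\mathbb{P}_*[E(\varepsilon)\mid (X_i,Y_i)_{i\in I_{\train}}]\big]+\beta(n_{\cal}).
\]

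Conditionally on the training data and under $\mathbb{P}_*$, the map $\widehat{s}_{\train}$ and the threshold $q_{\train}$ are frozen deterministic objects, and $(X_i,Y_i)_{i\in I_{\test}}$ is a stationary $\beta$-mixing sample --- a contiguous sub-block of the original process, hence with $\beta$-coefficients no larger than the original ones --- independent of everything in the conditioning. That is exactly the setting of Lemma~\ref{appendix:lemma:uniform_bound_for_conditional_quantile}, applied with the deterministic function $\widehat{s}_{\train}$, threshold $q_{\train}$, sample size $n_{\test}$, the family $\sA$ of \VC dimension $d$, and with its failure parameter $\delta$ set to $\delta_{\test}-\beta(n_{\cal})$, which by definition of $G_{\test}$ satisfies $\delta_{\test}-\beta(n_{\cal})>8(m-1)\beta(a)$. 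Note that this lemma already internalises the control of the random denominators $n_{\test}(A)$ (via Lemma~\ref{appendix:lemma:bound_for_infimum_less_than}), so no separate event for $\inf_{A\in\sA}n_{\test}(A)/n_{\test}>\gamma/2$ need be carried. Splitting the test sample into $2m$ blocks of size $a$ with $s$ leftover points --- which contribute an additive $s/n_{\test}$ --- the lemma yields $\mathbb{P}_*[E(\varepsilon)\mid (X_i,Y_i)_{i\in I_{\train}}]\le\delta_{\test}-\beta(n_{\cal})$ as soon as $\varepsilon\ge\tfrac{2}{\gamma}\varepsilon_0\big(a,m,(\delta_{\test}-\beta(n_{\cal}))/2\big)+s/n_{\test}$ and $\tfrac{2}{\gamma}\varepsilon_0\big(a,m,(\delta_{\test}-\beta(n_{\cal}))/2\big)<1$. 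Adding the two contributions gives $\P[E(\varepsilon)]\le\delta_{\test}$; since this holds for every admissible triple, I would finish by taking the infimum over $(a,m,s)\in G_{\test}$ and invoking Sauer--Shelah ($\mathcal{S}_{\sA}(m)\le(m+1)^d$) inside $\varepsilon_0$, recovering precisely the stated $\varepsilon$, so that Assumption~\ref{assumption:concentration_test_conditional} holds with $\varepsilon_{\test}=\varepsilon$.

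The bulk of this is routine parameter bookkeeping; the delicate points are (i) checking that the two-block decoupling is legitimate, i.e.\ that $E(\varepsilon)$ belongs to the $\sigma$-algebra generated by the training and test blocks and that the relevant gap is at least $n_{\cal}$, so that the total-variation penalty is $\beta(n_{\cal})$ --- this is the one place where the $\beta$-mixing structure, rather than outright independence, is used; (ii) the measure-theoretic handling of the suprema over the uncountable family $\sA$ and over $t\in\R$, which is, however, entirely absorbed into Lemma~\ref{appendix:lemma:uniform_bound_for_conditional_quantile}, whose proof already reduces the sup over $t$ to a limit as $t\to+\infty$ and the sup over $\sA$ to a VC bound; and (iii) verifying the side condition $\varepsilon<1$, which is exactly the hypothesis imposed in the statement and is what permits the invocation of Lemma~\ref{appendix:lemma:uniform_bound_for_conditional_quantile}. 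I expect (i) to be the main thing to phrase carefully.
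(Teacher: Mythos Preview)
Your proposal is correct and follows essentially the same approach as the paper's own proof: decouple the training and test blocks via Proposition~\ref{prop:blocking_technique} at a cost of $\beta(n_{\cal})$, condition on the training data under $\mathbb{P}_*$, apply Lemma~\ref{appendix:lemma:uniform_bound_for_conditional_quantile} to the now-deterministic score and threshold, and optimise over $(a,m,s)\in G_{\test}$. Your write-up is in fact slightly more careful than the paper's (you use $\sup_{A\in\sA}$ rather than the paper's apparent typo $\inf_{A\in\sA}$ in the definition of $E(\varepsilon)$, and you spell out why no internal gap $r$ is needed here), but the argument is the same.
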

\begin{proof}
The proof is similar to the proof of Proposition \ref{proposition:assumption_concentration_cal_holds_conditional}.
Let the event $E(\varepsilon)$ be
\[E(\varepsilon)=\left\{\inf_{A\in\sA}\left|   P_{q,\train}(A)-\frac{\sum_{i\in I_{\test}(A)}\Ind{\widehat{s}_{\train}(X_i,Y_i)\leq q_{\train }]}}{n_{\test}(A)}\right|> \varepsilon\right\},\]
Define, $\P_* = \P_{1}^{n_{\train}} \otimes \P_{n_{\train}+n_{\cal}}^{n_{\train}+n_{\cal}+n_{\test}},$
so under $\P_*$ we have that $(X_i,Y_i)_{i\in I_{\train}}$ and $(X_i,Y_i)_{i\in I_{\test}}$ are independent. By Proposition \ref{prop:blocking_technique} we have
\begin{align*}
    \P[E(\varepsilon)]&\leq \P_*[E(\varepsilon)] +  \beta(n_{\cal})
= \E_{*}[\P_{*}[E(\varepsilon)\mid (X_i,Y_i)_{i\in I_{\train}}]] + \beta(n_{\cal}).
\end{align*}
Now we can apply Lemma \ref{appendix:lemma:uniform_bound_for_conditional_quantile}  and conclude that if $\varepsilon<1$, for any $m, a\in \N_{+}$ with  $n_{\test}=2ma$ and $\delta_{\test}>8(m-1)\beta(a))+\beta(n_{\cal})$, it is true that
$\P[E]\leq \delta$, where
\[\varepsilon=\frac{1}{\gamma}\left(4\sqrt{\frac{\log(2(m+1)^{d})}{m}} + 2\sqrt{\frac{1}{2m} \log\left(\frac{8}{\delta-8(m-1)\beta(a)-\beta(n_{\cal})}\right)}+\frac{s}{n_{\test}}\right).\]
Finally, since this is true for any choice of $a,m,s\in\N_{>0}$ with $s+2ma=n_{\test}$ and $\delta_{\test}>8(m-1)\beta(a)+\beta(n_{\cal})$, we can choose $a,m,s$ optimally to minimize $\varepsilon$.
\end{proof}

\begin{proof}[Theorem \ref{theorem:marginal_conditional_beta_mixing}]
Follows from applying Propositions \ref{proposition:assumption_concentration_cal_holds_conditional} and \ref{proposition:assumption_test_to_iid_holds_conditional} in Theorem \ref{theorem:marginal_coverage_test_data-conditional}.
\end{proof}

\begin{proof}[Theorem \ref{theorem:conditional_empirica_beta_mixing}]
Follows from applying Propositions \ref{proposition:assumption_concentration_cal_holds_conditional} and \ref{proposition:assumption_concentration_test_holds_conditional} in Theorem \ref{theorem:empirical_conditional_coverage_test_data}.
\end{proof}

\section{Proofs of Section \ref{sec:extensions} and Further Results}
\label{appendix:extensions}

\subsection{Non-Stationary Data}\label{sub:nonstationary}

\begin{proof}[Theorem \ref{theorem:marginal_nonstat}] This proof is similar to that of Theorem \ref{theorem:marginal_coverage_test_data}, but the notation is somewhat more complicated due to nonstationarity.

Consider $N_{\cal}$ as in the statement of the present Theorem. For $\phi\in (0,1)$, define the conditional $\phi$-quantile of $\widehat{s}_{\train}(X_{*,N_{\cal}},Y_{*,N_{\cal}})$ given the training data:
\[q_{\phi,\train}\coloneqq \inf\{t\in\R\,:\,\P[\widehat{s}_{\train}(X_{*,N_{\cal}},Y_{*,N_{\cal}})\leq t\mid \{(X_i,Y_i)\}_{i\in I_{\train}}]\geq \phi\},
\]
or alternatively,
\[q_{\phi,\train}\coloneqq \inf\left\{t\in\R\,:\,\frac{1}{n_{\cal}}\sum_{j\in N_{\cal}}\P[\widehat{s}_{\train}(X_{*,j},Y_{*,j})\leq t\mid \{(X_i,Y_i)\}_{i\in I_{\train}}]\geq \phi\right\}.
\]
For each $j\in I_{\cal}$, set
$p^{(j)}_{\phi,\train}\coloneqq \P[\widehat{s}_{\train}(X_{*,j},Y_{*,j})\leq q_{\phi_\ell,\train}\mid \{(X_i,Y_i)\}_{i\in I_{\train}}].
$
Fix $i\in I_{\test}$. Following the proof of Theorem \ref{theorem:marginal_coverage_test_data}, we consider values of $\phi$ of the form:
\[\phi_\ell \coloneqq 1-\alpha-\varepsilon_{\cal}-\delta^{(i)}-1/\ell\mbox{ for }\ell=1,2,3\dots\]
to obtain that $\P[F]\geq 1-\delta_{\cal}$, where $F\coloneqq\left\{q_{1-\alpha-\varepsilon_{\cal}.\cal}\leq \widehat{q}_{1-\alpha,\cal}\right\}.$
Now,
  \begin{align*}
  \P[\widehat{s}_{\train}(X_i,Y_i)\leq \widehat{q}_{1-\alpha,\cal}]  \geq &  \P[\widehat{s}_{\train}(X_i,Y_i)\leq q_{1-\alpha-\varepsilon_{\cal}}] - \P[F^c] \\ \geq &  \P[\widehat{s}_{\train}(X_{*,i},Y_{*,i})\leq q_{1-\alpha-\varepsilon_{\cal}}] -\delta_{\cal} - \varepsilon_{\test}.
  \end{align*}
By assumption, the law of $(X_{*,i},Y_{*,i})$ is $\delta^{(i)}$-close in total variation to that of $(X_{*,N_{\cal}},Y_{*,N_{\cal}})$. Since the event $\{\widehat{s}_{\train}(X_{*,i},Y_{*,i})\leq q_{1-\alpha-\delta^{(i)}-\varepsilon_{\cal}}\}$ depends on $(X_{*,i},Y_{*,i})$ and on the independent process $(X_j,Y_j)_{j\in[n]}$, and $q_{1-\alpha-\varepsilon_{\cal},\train}$ is the conditional ($1-\alpha-\varepsilon_{\cal}$)-quantile of $\widehat{s}_{\train}(X_{*,N_{\cal}},Y_{*,N_{\cal}})$ given the training data, one may conclude:
\begin{align*}
\P[\widehat{s}_{\train}(X_{*,i},Y_{*,i})\leq q_{1-\alpha-\varepsilon_{\cal}}]\geq &  \P[\widehat{s}_{\train}(X_{*,N_{\cal}},Y_{*,N_{\cal}})\leq q_{1-\alpha-\varepsilon_{\cal}}] -\delta^{(i)}\\ \geq &  1-\alpha-\varepsilon_{\cal}-\delta^{(i)}.
\end{align*}
Plugging this back into the previous display finishes the proof. \end{proof}

\subsection{Risk-Controlling Prediction Sets}\label{sub:RCPS}

Risk-controlling prediction sets (RCPS), introduced by \citet{bates2021rcps}, give a general methodology for CP that applies in a variety of settings, including regression, multiclass classification and image segmentation. Importantly, RCPS does not involve nonconformity scores, but rather, the construction of nested sets. While the original theory of RCPS assumes independent data, we now show it also applies within our framework.

Suppose $\sY'$ is a family of sets, $\Lambda\subset \R\cup\{+\infty\}$ is a closed set, and a map $\sT\colon(\sX\times \sY)^{n_{\train}}\times \sX\times \Lambda\to \sY'$ is given with the following property: for all choices of $(x_i,y_i)_{i=1}^{n_{\cal}}\in (\sX\times \sY)^{n_{\cal}}$, $x\in \sX$ and $\lambda_1,\lambda_2\in \Lambda$: if $\lambda_1 \leq \lambda_{2}$, then $\sT((x_i,y_i)_{i=1}^{n_{\cal}},x,\lambda_1) \subset \sT((x_i,y_i)_{i=1}^{n_{\cal}},x,\lambda_2)$.

For $(x,\lambda)\in \sX$, we use the notation
\[\widehat{\sT}_{\lambda,\train}(x) \coloneqq \sT((X_i,Y_i)_{i\in I_{\train}},x,\lambda)\]
to denote the values of $\sT$ when the first $n_{\train}$ pairs in the input correspond to the training data. We call $\widehat{\sT}_{\lambda,\train}(\cdot)$ a trained tolerance region. Finally, $L\colon\sY\times \sY'\to \R$ is a loss function that is decreasing in the $\sY'$ component. The goal of RCPS is to compute a value $\widehat{\lambda}$ from the calibration data that achieves (conditional) risk smaller than a prespecified level $\alpha>0$.

To define the conditional risk, assume that the map from $\lambda\in\Lambda$ to $\E[L(Y_*,\widehat{\sT}_{\lambda}(X_*))\mid (X_i,Y_i)_{i\in I_{\train}}]$ almost surely is continuous and achieves arbitrarily small positive values. Given a measurable $\ell\colon(\sX\times \sY)^{n_{\train}}\to \Lambda$, we let $\ell_{\train} \coloneqq \ell((X_i,Y_i)_{i\in I_{\train}})$, define the conditional expected risk as
\begin{equation*}
    R(\ell) \coloneqq \E[L\big(Y_*,\widehat{\sT}_{\ell_{\train}}(X_*)\big)\mid (X_i,Y_i)_{i\in I_{\train}}].
\end{equation*}
Also, define the empirical risk over the calibration data as
\begin{equation*}
    \widehat{R}_{\cal}(\lambda) \coloneqq \frac{1}{n_{\cal}}\sum_{i\in I_{\cal}}L(Y_i,\sT_{\lambda}(X_i)).
\end{equation*}

Now, a threshold $\widehat{\lambda}$ must be chosen from calibration data to control the risk. In \citet{bates2021rcps}, this requires finding a function $\lambda\mapsto \widehat{R}_{\text{UCB}}(\lambda)$ that gives a pointwise high-probability upper bound on $R(\lambda)$. In our case, we can allow for a $\widehat{R}(\lambda)$ that bounds the risk up to a small error; for us, the empirical risk will play this role. Thus, consider the empirical threshold
\begin{equation*}
    \widehat{\lambda}_{\alpha,\cal} \coloneqq \inf\left\{\lambda\in \Lambda\,:\, \forall \lambda'\in \Lambda, \lambda'>\lambda\implies \widehat{R}_{\cal}(\lambda)<\alpha\right\}.
\end{equation*}

Finally, we give conditions that guarantee that $\widehat{\lambda}_{\alpha,\cal}$ controls the risk with high probability. First, assume that there exist $\varepsilon_{\cal}>0$, $\delta_{\cal}\in (0,1)$ such that, for any $\ell,\ell_{\train}$,
\begin{equation}\label{eq:rcps-hypothesis1}
    \P\left[|\widehat{R}_{\cal}(\ell_{\train}) - R(\ell)|\leq \varepsilon_{\cal}\right]\geq 1-\delta_{\cal}.
\end{equation}
Also, assume there exists a $\varepsilon_{\test}$ such that for all $i\in I_{\test}$ and all $\ell$,
\begin{equation} \label{eq:rcps-hypothesis2}
    |\E[L(Y_i,\sT_{\ell_{\train}}(X_i))] -\E[R(\ell)]|\leq \varepsilon_{\test}.
\end{equation}

Then, the following result on the performance of RCPS over a single test point holds.

\begin{theorem}[Approximate risk control for $\widehat{\lambda}_{\alpha,\cal}$]\label{thm:RCPS} Assume (\ref{eq:rcps-hypothesis1}) and (\ref{eq:rcps-hypothesis2}). Then,
\begin{equation*}
    \P\left[\E[L(Y_*,\sT_{\widehat{\lambda}_{\alpha,\cal}}(X_*))]\leq \alpha + \varepsilon_{\cal}\right]\geq 1-\delta_{\cal}.
\end{equation*}
Moreover, if $L$ is uniformly bounded, we have the following for all $i\in I_{\test}$:
\begin{equation*}
    \E[L(Y_i,\sT_{\widehat{\lambda}_{\alpha,\cal}}(X_i))]\leq \alpha + \varepsilon_{\test} + \varepsilon_{\cal} + \|L\|_{\infty}\,\delta_{\test}.
\end{equation*}
\end{theorem}
\begin{proof}
The proof is a combination of our arguments for Theorem \ref{theorem:marginal_coverage_test_data} with the reasoning in \citet{bates2021rcps}. Recall that for any function $\ell:(\sX\times\sY)^{n_{\train}}\to \Lambda$, if $\ell_{\train} \coloneqq \ell((X_i,Y_i)_{i\in I_{\train}})$,
\[R(\ell) \coloneqq \E[L(Y_*,\sT_{\ell_{\train}}(X_*))\mid (X_i,Y_i)_{i\in I_{\train}}].\]
Make the specific choice
$\ell_{\train}\coloneqq \inf\{\lambda\in \Lambda\,:\,R(\lambda)\leq \alpha+\varepsilon_{\cal}\},$
so that, by right-continuity of $\lambda\mapsto \E[L(Y_*,\sT_{\lambda}(X_*))\mid (X_i,Y_i)_{i\in I_{\train}}]$,
\begin{equation}\label{eq:continuityRCPS}R(\ell_{\train})\leq \alpha+\varepsilon_{\cal}\end{equation}
while at the same time $R(\ell_{\train}-1/k)>\alpha + \varepsilon_{\cal}\mbox{ for all $k\in\N$.}$ The definition of $\widehat{\lambda}_{\alpha,\cal} = \inf\{\lambda\in \Lambda\,:\,\widehat{R}_{\cal}(\lambda)<\alpha\}$ and the fact that our risk decreases with $\lambda$ imply:
\[\P[\widehat{\lambda}_{\alpha,\cal}\geq \ell_{\train} - 1/k]\geq \P[\widehat{R}_{\cal}(\ell_{\train} - 1/k)>\alpha]\geq 1-\delta_{\cal}\]
where the last step uses assumption (\ref{eq:rcps-hypothesis1}). Letting $k\to +\infty$ gives:
\begin{equation}\label{eq:widehatlambda}\P[\widehat{\lambda}_{\alpha,\cal}\geq \ell_{\train}]\geq 1-\delta_{\cal}\end{equation}
Now, $\widehat{\lambda}_{\alpha,\cal}\geq \ell_{\train}$ and (\ref{eq:continuityRCPS}) together imply:
\begin{equation}\label{eq:contin}\E[L(Y_*,\sT_{\widehat{\lambda}_{\alpha,\cal}}(X_*))]\leq \E[L(Y_*,\sT_{\ell_{\train}}(X_*))] = \E[R(\ell_{\train})]\leq \alpha+\varepsilon_{\cal},\end{equation}
so the first assertion in the Theorem follows from (\ref{eq:widehatlambda}). The second assertion follows from:
\[\E[L(Y_i,\sT_{\widehat{\lambda}_{\alpha,\cal}}(X_i))]\leq \E[L(Y_i,\sT_{\ell_{\train}}(X_i))] + \|L\|_{\infty}\P[\widehat{\lambda}_{\alpha,\cal}<\ell_{\train}]\]
combined with (\ref{eq:rcps-hypothesis2}) and (\ref{eq:contin}).\end{proof}

Thus the expected loss at any test point is controlled by $\alpha$ plus an error term that can be shown to be small, even for non-exchangeable data. Importantly, the result is achieved via assumptions that only bound the behavior of the loss over individual thresholds $\ell_{\train}$ obtained from the training data. In particular, there is no need to require uniform control of the loss over a range of $\ell$, which would require stronger (and looser) concentration bounds. The uniform bound on $L$ can be replaced by a moment assumption, at the cost of a less clean bound.

\subsection{Rank-One-Out Conformal Prediction}\label{sub:ROO}

Rank-one-out (ROO) conformal prediction, introduced by \citet{lei2018distribution}, is different from split CP in that the method calibrates the quantile used for each test data point by looking at the remaining test points.
This requires adapting the above setup as follows: partition the data indices as $[n]=I_{\train}\sqcup I_{\test}$, and for each $i \in I_{\test}$ the calibration set is $I^{(i)}_{\cal}=I_{\test}\setminus\{i\}$. Also, define the empirical quantiles as follows: given $\phi\in [0,1)$ and $i\in I_{\test}$, let $\widehat{q}^{(i)}_{\phi,\cal}$ denote the empirical $\phi$-quantile
\[\widehat{q}^{(i)}_{\phi,\cal} \coloneqq \inf\left\{t\in\R\,:\, \frac{1}{n_{\test}-1}\sum_{j\in I_{\cal}^{(i)}}\Ind{\widehat{s}_{\train}(X_j,Y_j)\leq t}\geq \phi\right\}.\]
For $x\in\sX$, the rank-one-out prediction set for $i\in I_{\test}$ is then defined via:
\[C^{(i)}_{\phi}(x) \coloneqq \{y\in\sY\,:\, \widehat{s}_{\train}(x,y)\leq \widehat{q}^{(i)}_{\phi,\cal}\}.\]

We can then adapt the concentration and decoupling hypotheses. Indeed, we assume there exist $\varepsilon_{\test}\in (0,1)$, $\{\varepsilon_{\test}(i)\}_{i\in I_{\test}}\subset (0,1)$ and $\delta_{\test}\in (0,1)$ such that, for any $i\in I_{\test}$,
\begin{equation} \label{eq:roo-hypothesis1}
    |\P[\widehat{s}_{\train}(X_i,Y_i)\leq q_{\train}] - \P[\widehat{s}_{\train}(X_*,Y_*)\leq q_{\train}]|\leq \varepsilon_{\test}(i),
\end{equation}
and, moreover,
\begin{equation} \label{eq:roo-hypothesis2}
    \P\left[\left|\frac{1}{n_{\test}}\sum_{i\in I_{\test}}\Ind{\widehat{s}_{\train}(X_i,Y_i)\leq q_{\train}} - P_{q,\train}\right|\leq \varepsilon_{\test}\right]\geq 1-\delta_{\test}.
\end{equation}

Then, the analogue of Theorems \ref{theorem:marginal_coverage_test_data} and \ref{theorem:empirical_coverage_test_data} still hold for ROO.

\begin{theorem}[Marginal and empirical coverage over test data for ROO]\label{theorem:marginal_coverage_test_data_ROO}
Given a level $\alpha\in(0,1)$, if (\ref{eq:roo-hypothesis1}) and (\ref{eq:roo-hypothesis2}) hold, then, for all $i\in I_{\test}$:
\[\P[\widehat{s}_{\train}(X_i,Y_i)\leq \widehat{q}_{1-\alpha,\cal}]\geq 1-\alpha -  \varepsilon_{\test}(i) - \varepsilon_{\test} - \delta_{\cal}-\frac{1}{n_{\test}}.\]
Moreover, it holds that
\[\P\left[\frac{1}{n_{\test}}\sum_{i\in I_{\test}}\Ind{\widehat{s}_{\train}(X_i,Y_i)\leq \widehat{q}^{(i)}_{1-\alpha,\cal}}\geq 1-\alpha-\varepsilon_{\test}-\frac{1}{n_{\test}}\right]\geq 1-2\delta_{\test}.\]
\end{theorem}
\begin{proof}
If we consider $I_{\cal} \coloneqq I_{\test}$ in the proof of Theorem \ref{theorem:marginal_coverage_test_data},  the event
$F=\{q_{1-\alpha-\varepsilon_{\cal},\train}\leq \widehat{q}_{1-\alpha,\cal}\}$
satisfies $\P[F]\geq 1-\delta_{\cal}.$
But since
$\widehat{q}^{(i)}_{1-\alpha,\cal}\geq \widehat{q}_{1-\alpha-1/n_{\test},\cal},$
it is also true that the event
$F'=\{q_{1-\alpha-\varepsilon_{\cal}-1/n_{\test},\train}\leq \widehat{q}^{(i)}_{1-\alpha,\cal}\}$
also satisfies $\P[F']\geq 1-\delta_{\cal}$. The rest of the proof follows the same strategy as in Theorem \ref{theorem:marginal_coverage_test_data} using $\widehat{q}^{(i)}_{1-\alpha,\cal}$ instead of $\widehat{q}_{1-\alpha,\cal}$.
\end{proof}

One can adapt the analysis in Section \ref{sec:stationary_beta_mixing_data} to bound the parameters $\delta_{\test}$, $\varepsilon_{\test}$ and $\varepsilon_{\test}(i)$ for $\beta$-mixing data. In particular, one may take $\varepsilon_{\test}(i)=\beta(i-n_{\cal})$, and $\varepsilon_{\test},\delta_{\test}$ equal to the respective parameters $\varepsilon_{\cal},\delta_{\cal}$ in that section, but with $n_{\test}$ replacing $n_{\cal}$ (since the calibration set for each point of rank-one-out is essentially equal to the test set).

On the other hand, we note that marginal coverage might suffer somewhat over the first few test data, since $\varepsilon_{\test}(i)=\beta(i-n_{\cal})$ may be large for small values $i-n_{\cal}$. In contrast to split CP, there is no gap in ROO between training and test data so the first test points may be strongly correlated with the training data.

\section{Details of Section~\ref{sec:empirical_studies}}
\label{appendix:empirical_studies}

For the experiment comparing split CP and EnbPI (cf. Figure~\ref{fig:rolling_coverage-split_cp-enbpi}), split CP's underlying random forest model comprised 100 trees; mean squared error was used as the split criterion; no maximum tree depth was set, so nodes are expanded until all leaves contain less than 2 samples; all features were considered for splitting. EnbPI's random forest was exactly the same and the length of the blocks in the EnbPI's block bootstrap procedure was set to 8 with the number of resamplings set to 30. For the $\text{AR}(1)$ experiment (cf. Figure~\ref{fig:intro}) and Examples 3 and 4, gradient boosting was set to boost 100 trees with a learning rate of 0.1 and pinball loss; trees of any depth were allowed; the minimal number of data in one leaf was 20; the minimal sum hessian in one leaf was 0.001; no minimal gain to perform a split was required; no more than 31 leaves were allowed per tree; no regularization was set. The neural network consisted of three fully connected layers with ReLU activation; the number of output units were 128, 64 and 2, respectively, where the final output of 2 units represents the low and high quantiles being estimated; AdamW with learning rate of $10^{-3}$ and weight decay of $10^{-6}$ was used; training was over 100 epochs with batches of size 64; pinball loss was used. The random forest model (quantile regression forest) comprised 10 trees; mean squared error was used as the split criterion; no maximum tree depth was set, so nodes are expanded until all leaves contain less than 2 samples; all features were considered for splitting. Quantile regressors making use of the pinball loss $L_\tau(y, \hat{y}) = \tau (y - \hat{y}) \Ind{y \geq \hat{y}} + (1-\tau) (\hat{y}-y) \Ind{y < \hat{y}}$ had $\tau$ set to $\alpha/2$ and $1-\alpha/2$, with $\alpha$ the acceptable miscoverage level.

\vskip 0.2in
\bibliography{bibliography}

\end{document}